
    \documentclass{amsart}

        \usepackage{latexsym}
        \usepackage{amssymb}
        \usepackage{amsmath}
        \usepackage{amsfonts}
        \usepackage{amsthm}
        \usepackage[mathcal]{eucal}
        \usepackage{fancyhdr}
        \usepackage[hypertexnames=false]{hyperref}
       \usepackage[all,knot,poly,2cell]{xy}

\usepackage{times}
\usepackage{euler}

\newcommand{\C}{\mathbb{C}}
\newcommand{\Z}{\mathbb{Z}}
\newcommand{\Coll}{\mathscr{C}}
\renewcommand{\bar}{\overline}
\DeclareMathOperator{\supp}{supp}

\newcommand{\Orb}{\mathscr{O}}
\DeclareMathOperator{\Hom}{Hom}

\DeclareMathOperator{\Ann}{Ann}
\DeclareMathOperator{\im}{im}
\DeclareMathOperator{\coker}{coker}

\newcommand{\Aff}{\mathbb{A}}    
\newcommand{\ideal}{\triangleleft}      
\DeclareMathOperator{\spec}{Spec}

\DeclareMathOperator{\Isom}{Isom}

\newcommand{\an}[1]{{#1}_{\mathrm{an}}}

\newcommand{\et}[1]{{#1}_{\tiny\mbox{\'et}}}
\DeclareMathOperator{\obj}{\mathbf{Obj}}

\renewcommand{\Pr}{\mathbb{P}\,}
       \newcommand{\itemref}[1]{\eqref{#1}}


        \theoremstyle{plain}
        \newtheorem{thm}{Theorem}[section]
        \newtheorem{cor}[thm]{Corollary}
        \newtheorem{lem}[thm]{Lemma}
        \newtheorem{prop}[thm]{Proposition}

        \newtheorem{mainthms}{Theorem}

        \theoremstyle{definition}
        \newtheorem{defn}[thm]{Definition}
        \newtheorem{ex}[thm]{Example}

        \theoremstyle{remark}
        \newtheorem{rem}[thm]{Remark}
       
\newcommand{\SEC}[2]{\underline{\mathrm{Sec}}_{{#1}}({#2})} 
\newcommand{\SCHABS}{\mathbf{Sch}}
\newcommand{\COH}[1]{\mathbf{Coh}\,({#1})}
\newcommand{\COHP}[1]{\mathbf{Coh}_p({#1})}
\newcommand{\HS}[1]{\underline{\mathrm{HS}}_{{#1}}}
 
\newcommand{\SCH}[1]{\SCHABS/{#1}}
\newcommand{\BETSITE}[1]{({#1})_{\tiny{\mbox{\'Et}}}}

\newcommand{\HILB}[1]{\underline{\mathrm{Hilb}}_{#1}} 
\newcommand{\fml}[1]{\mathfrak{{#1}}}
\newcommand{\STEIN}[1]{\mathcal{A}({#1})}
\newcommand{\STEINFAC}[1]{\mathcal{B}({#1})}
\newcommand{\AN}[1]{\mathscr{#1}} 
\newcommand{\HSAN}[1]{\HS{{#1}}^{\mathrm{an}}}

\newcommand{\HILBAN}[1]{\HILB{{#1}}^{\mathrm{an}}}
\newcommand{\curv}{U}

    \title{Generalizing the GAGA Principle}
   \author[J. Hall]{Jack Hall}
    \email{jhall@math.stanford.edu}
    \address{Stanford University\\Department of Mathematics\\Building
      380\\Stanford, California 94305\\U.S.A.} 
    \thanks{We would like to express our sincerest thanks to Brian
      Conrad for his insightful comments and suggestions.} 

\newcommand{\ANABS}{\mathbf{An}}
\newcommand{\pscSHV}[1]{\mathbf{PS}_p({#1})}
\newcommand{\pfdSHV}[1]{\mathbf{PS}_{p,\mathrm{DM}}({#1})} 
\begin{document}
\begin{abstract}
  This paper generalizes the fundamental GAGA results of Serre
  \cite{MR0082175} in three ways---to the non-separated setting, to
  stacks, and to families.  As an application of these results, we
  show that analytic compactifications of $\mathcal{M}_{g,n}$ possessing
  \emph{modular} interpretations are algebraizable.
\end{abstract}
\subjclass[2010]{Primary 14C05; Secondary 32C99, 14D23}
\maketitle
\section{Introduction}\label{ch:intro}
A locally separated and locally of finite type algebraic
$\C$-space $X$ may be functorially analytified to an analytic space
$\an{X}$. More generally, one may 
functorially analytify a locally of finite type $\C$-stack with
locally separated diagonal. It was shown by B. Conrad and M. Temkin in
\cite[Thm.\ 2.2.5]{MR2524597} that the analytification of a locally of
finite type algebraic $\C$-space is an analytic space only if it is locally
separated. In particular, for locally of finite type
$\C$-stacks $Z$ and $X$ with locally separated diagonals, one obtains an
analytification functor
\[
\Hom(Z,X) \to \Hom(\an{Z},\an{X}).
\]
The first main result of this paper is 
\begin{mainthms}\label{thm:homs_GAGA}
  For a locally of finite type Deligne-Mumford $\C$-stack $X$ with
  quasi-compact and separated diagonal, and a proper Deligne-Mumford
  $\C$-stack $Z$, the analytification functor:     
  \[
  \Hom(Z,X) \to \Hom(\an{Z},\an{X})
  \]
  is an equivalence.
\end{mainthms}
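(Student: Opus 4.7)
The plan is to identify an algebraic morphism $f : Z \to X$ with the closed substack $\Gamma_f \subset Z \times X$ given by its graph, and then to algebraize analytic graphs by invoking a GAGA principle for coherent sheaves on $Z \times X$.

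First I would reduce to the case where $X$ is of finite type over $\C$. Since $Z$ is proper, $\an{Z}$ is compact; any analytic $\phi : \an{Z} \to \an{X}$ therefore has compact image and factors through $\an{U}$ for some quasi-compact open substack $U \subset X$---cover $X$ by quasi-compact opens, analytify, and extract a finite subcover of the image. The same argument handles analytic $2$-morphisms. Replacing $X$ by $U$, we may assume $X$ is of finite type over $\C$.

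Next, a graph construction identifies the Hom-groupoids in question with groupoids of closed substacks. The map $(\mathrm{id}_Z, f) : Z \to Z \times X$ is a base change of $\Delta_X$, so under the hypotheses on $\Delta_X$ it is a closed immersion, and its image $\Gamma_f$ is a closed substack whose first projection is an isomorphism. Two algebraic morphisms $f, g : Z \to X$ are $2$-isomorphic precisely when their graphs coincide as substacks of $Z \times X$, and $2$-morphisms between them correspond to sections of the $\Isom$-stack $\Isom_X(f,g) \to Z$, which is the pullback of $\Delta_X$ by $(f,g) : Z \to X \times X$. The same descriptions hold verbatim on the analytic side, so essential surjectivity becomes an algebraization question for certain closed substacks of $\an{Z} \times \an{X}$, and full faithfulness an algebraization question for closed substacks of $\Isom_X(f,g)^{\mathrm{an}}$. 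For essential surjectivity, given analytic $\phi$, the graph $\Gamma_\phi \subset \an{Z} \times \an{X}$ is isomorphic to $\an{Z}$ and hence compact, so its structure sheaf $\mathcal{O}_{\Gamma_\phi}$ is a coherent analytic sheaf on $\an{Z} \times \an{X}$ with support proper over $\an{Z}$. The crucial step is to algebraize $\mathcal{O}_{\Gamma_\phi}$ to a coherent sheaf on $Z \times X$---precisely the coherent-GAGA the paper evidently establishes in its generalized (non-separated, stacky, families) form. Conservativity and faithful flatness of analytification then force the resulting closed substack $\Gamma \subset Z \times X$ to project isomorphically onto $Z$, producing the desired $f$; full faithfulness is handled in parallel, algebraizing the corresponding section inside $\Isom_X(f,g)^{\mathrm{an}}$.

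The main obstacle will be the underlying coherent-GAGA input, because the ambient stack $Z \times X$ is not proper. One option is to compactify $X$ by a Chow- or Nagata-type construction for Deligne--Mumford stacks and reduce to classical proper GAGA on $Z \times \bar{X}$; the other, more consonant with the paper's viewpoint, is to develop a \emph{relative} GAGA over the proper base $Z$ that handles coherent sheaves whose support is proper over $Z$. Either way, the delicate verification is that the algebraization of $\mathcal{O}_{\Gamma_\phi}$, and of the corresponding sections of $\Isom$-stacks, is again supported on a closed substack projecting isomorphically to $Z$.
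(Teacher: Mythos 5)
Your skeleton---reduce to quasi-compact $X$, then trade a morphism $Z\to X$ for its graph in $Z\times X$ and algebraize the graph---is exactly how the paper derives this theorem from its pseudosheaf GAGA (Theorem \ref{thm:GAGA_stacks}). But the pivotal claim in your second paragraph is false: the hypotheses give only that $\Delta_X$ is quasi-compact and separated, not that it is a closed immersion, so its base change $\Gamma_f : Z\to Z\times X$ is merely a representable, quasi-finite, separated monomorphism. When $X$ is non-separated---the only case with any content here---the graph is genuinely not closed: take $Z=\Pr^1$, let $X$ be $\Pr^1$ with a doubled origin, and let $f$ be the evident map; the closure of $\Gamma_f$ in $\Pr^1\times X$ contains the extra point $(0,0')$ lying over the second origin. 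Since $|\an{Z}\times\an{X}|$ is not Hausdorff, compactness of $\Gamma_\phi$ does not force it to be closed, and $\Orb_{\Gamma_\phi}$ is \emph{not} a coherent sheaf on the ambient stack (the support of a coherent sheaf is closed). So the reduction of essential surjectivity to a coherent-GAGA statement on $Z\times X$ collapses, and both proposed patches fail for the same reason: a Nagata- or Chow-type compactification $X\hookrightarrow\bar{X}$ with $\bar{X}$ proper cannot exist for non-separated $X$ (an immersion into a separated stack forces separatedness), and a relative coherent GAGA over $Z$ still only sees sheaves with closed support.

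The correct substitute is the paper's category of pseudosheaves: pairs $(W\xrightarrow{s}Y,F)$ with $s$ representable, quasi-finite and \emph{separated}---not a closed immersion---and $W$ proper. The graph $(\an{Z}\xrightarrow{\Gamma_\phi}\an{(Z\times X)},\Orb_{\an{Z}})$ is such an object; Theorem \ref{thm:GAGA_stacks} algebraizes it to a quasi-finite separated $W\to Z\times X$ with $W$ proper, and separated GAGA (Corollary \ref{cor:clGAGA2}) shows $W\to Z$ is an isomorphism because its analytification is, whence $f:=\mathrm{pr}_X\circ(W\to Z\times X)$. Proving that non-separated GAGA statement---via birational and generically finite \'etale d\'evissage, Stein factorizations, and conductor-square pushouts---is the actual content of the paper; your proposal presupposes a coherent-sheaf version of it that cannot be correct as stated. (Your treatment of $2$-morphisms via $\Isom_X(f,g)\to Z$ is essentially fine, since that map is separated and its sections are closed immersions; this is in line with Lemma \ref{lem:ff_GAGA}.)
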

In \cite{lurie_tannaka}, J. Lurie proves a related result to Theorem
\ref{thm:homs_GAGA}---the stack $X$ is permitted to be
algebraic (as opposed to Deligne-Mumford), but the diagonal is
assumed to be affine. Thus Theorem
\ref{thm:homs_GAGA} is stronger than what appears in [\emph{loc.\
  cit.}]\ when applied to Deligne-Mumford stacks 
(e.g. schemes and algebraic spaces). Our proof of Theorem
\ref{thm:homs_GAGA} will rely on a version of Serre's GAGA principle 
for non-separated algebraic stacks.  
 
For a locally of finite type algebraic $\C$-stack
$X$,  define the category of \textbf{pseudosheaves} on $X$,
$\pscSHV{X}$, to have objects the pairs $(Z\xrightarrow{s}
X,F)$, where $s$ is a representable, quasi-finite, and separated
morphism of algebraic 
stacks, with $Z$ proper, and $F$ a coherent
$\Orb_Z$-module. A morphism of pseudosheaves $(Z\xrightarrow{s} X,F) \to
(Z'\xrightarrow{s'} X, F')$ is a pair
$(Z'\xrightarrow{t} Z , F \xrightarrow{\phi} t_*F')$, where $t$ is a
finite $X$-morphism and $\phi$ is a morphism of coherent
$\Orb_{Z}$-modules. The category of pseudosheaves on an algebraic
stack $X$ is a non-abelian refinement of the category of
coherent sheaves on $X$.

A morphism of analytic stacks $\sigma : \AN{Z} \to \AN{X}$ is
\textbf{locally quasi-finite} if it is representable by analytic
spaces, and for any $x\in |\AN{X}|$ the fiber $\sigma^{-1}(x)$ is a finite
set of points. Thus, for an analytic stack $\AN{X}$, we may similarly
define a category of analytic pseudosheaves $\pscSHV{\AN{X}}$, with
objects given by pairs $(\AN{Z}\xrightarrow{\sigma} \AN{X},\AN{F})$,
where $\sigma$ is a locally quasi-finite and separated morphism of analytic
stacks, $\AN{Z}$ is a proper analytic stack, and
$\AN{F}$ is a coherent $\Orb_{\AN{Z}}$-module. There is an
analytification functor for pseudosheaves:
\[
\Psi_X : \pscSHV{X} \to \pscSHV{\an{X}}.
\]
Theorem \ref{thm:homs_GAGA} will follow from
\begin{mainthms}\label{thm:GAGA_stacks}
  For any locally of finite type Deligne-Mumford $\C$-stack $X$ with 
  quasi-compact and separated diagonal, the 
  analytification functor   
  \[
  \Psi_X : \pscSHV{X} \to \pscSHV{\an{X}}
  \]
  is an equivalence.
\end{mainthms}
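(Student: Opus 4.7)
The plan is to prove $\Psi_X$ is fully faithful and essentially surjective, reducing in both cases to classical GAGA for coherent sheaves and finite morphisms on proper Deligne--Mumford $\C$-stacks.

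For fully faithfulness, fix two algebraic pseudosheaves $(s_i\colon Z_i\to X, F_i)$ for $i=1,2$. A morphism $(t,\phi)$ between them consists of a finite $X$-morphism $t\colon Z_2\to Z_1$ of proper Deligne--Mumford stacks together with an $\Orb_{Z_1}$-linear map $\phi\colon F_1\to t_*F_2$. Since $t$ is finite, $t_*F_2$ is coherent on the proper stack $Z_1$, so classical GAGA identifies algebraic and analytic $\Hom_{\Orb_{Z_1}}(F_1, t_*F_2)$, handling the $\phi$-component. The morphism $t$ is itself equivalent data to the coherent $\Orb_{Z_1}$-algebra $t_*\Orb_{Z_2}$, which algebraizes by classical GAGA on $Z_1$; the $X$-compatibility $s_1\circ t=s_2$ is preserved by analytification because morphisms out of the proper stack $Z_2$ are themselves detected on analytifications.

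For essential surjectivity, start with an analytic pseudosheaf $(\sigma\colon \mathscr{Z}\to \an{X}, \mathscr{F})$. Classical GAGA algebraizes the proper analytic Deligne--Mumford stack $\mathscr{Z}$ to an algebraic proper stack $Z$ and lifts $\mathscr{F}$ to a coherent sheaf $F$ on $Z$. The heart of the proof is algebraizing the morphism $\sigma$ itself. Since $\mathscr{Z}$ is compact, its image $\sigma(\mathscr{Z})$ is a compact subset of $\an{X}$, hence is contained in $\an{U}$ for some quasi-compact open substack $U\subset X$; replacing $X$ by $U$, we may assume $X$ is of finite type over $\C$. The analytic version of Zariski's main theorem then factors $\sigma$ as an open immersion $\mathscr{Z}\hookrightarrow \bar{\mathscr{Z}}$ followed by a finite morphism $\bar\sigma\colon \bar{\mathscr{Z}}\to \an{X}$; the latter is classified by the coherent $\Orb_{\an{X}}$-algebra $\bar\sigma_*\Orb_{\bar{\mathscr{Z}}}$, which has compact support inside $\an{X}$. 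Algebraizing this compactly supported coherent algebra to a finite $\bar{Z}\to X$, and taking $Z$ to be the open substack of $\bar{Z}$ whose analytification matches $\mathscr{Z}\subset \bar{\mathscr{Z}}$, produces the desired algebraic pseudosheaf.

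The principal obstacle is the algebraization of a compactly supported coherent algebra on the non-proper analytic stack $\an{X}$. The natural route is to embed $X$ into a proper Deligne--Mumford compactification $\bar{X}$ via Olsson's compactification theorem; because the algebra already has compact support inside $X$, its extension by zero to $\bar{X}$ remains coherent, and classical GAGA on the proper $\bar{X}$ yields an algebraic $\bar{Z}\to \bar{X}$ that automatically factors through $X$. The final verifications that $s\colon Z\to X$ is representable, quasi-finite, and separated can be made étale-locally on $X$ and are preserved by analytification, so they follow from the corresponding properties of $\sigma$.
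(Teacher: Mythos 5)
Your essential-surjectivity argument rests on steps that fail, and one failure is fatal. First, the opening claim that ``classical GAGA algebraizes the proper analytic Deligne--Mumford stack $\AN{Z}$'' is not a consequence of GAGA: an abstract proper analytic space need not be algebraizable (non-algebraic compact tori, for instance), and here the algebraizability of $\AN{Z}$ can only come from its locally quasi-finite structure map to $\an{X}$ --- which is exactly what has to be constructed. Second, and decisively, your main device is to compactify $X$ to a proper Deligne--Mumford stack $\bar{X}$ and algebraize the coherent algebra $\bar{\sigma}_*\Orb_{\bar{\AN{Z}}}$ by extension by zero. But a proper stack is separated, and an open substack of a separated stack is separated; hence a non-separated $X$ admits \emph{no} open immersion into a proper stack. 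Since everything in Theorem \ref{thm:GAGA_stacks} beyond the separated case of Appendix \ref{app:clGAGA} concerns non-separated $X$, the proposed reduction cannot even begin. (Olsson's theorem \cite{MR2183251} supplies proper \emph{coverings}, not compactifications.) In addition, the ``analytic Zariski Main Theorem'' you invoke --- factoring a locally quasi-finite separated map from a compact analytic stack to the non-Hausdorff space $\an{X}$ as an open immersion into a finite cover --- is not available in the analytic category; producing such a finite, algebraizable cover is essentially the entire difficulty, as the paper's Example 2.4 (a non-algebraizable finite cover of a universally closed, non-separated $X$) is meant to illustrate.

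The paper's actual route is quite different. Full faithfulness (Lemma \ref{lem:ff_GAGA}) is proved by realizing $\Isom$-sheaves of pseudosheaves inside section spaces of the separated morphisms $Z\times_X Z'\to Z$ and applying separated GAGA (Theorem \ref{thm:clGAGA}, Corollary \ref{cor:clGAGA1}); note that your treatment of the $X$-compatibility $s_1\circ t=s_2$ is circular, since detecting $2$-morphisms over the non-separated $X$ on analytifications is part of what is being proved. Essential surjectivity is established by d\'evissage and noetherian induction: Raynaud--Gruson supplies, for each closed subscheme, a birational modification factoring as quasi-compact \'etale over a projective scheme; in that basic case the composite $\AN{Z}\to\an{Y}$ is finite over a projective target, and the lift through the \'etale part is algebraized by a comparison theorem for constructible sheaves (Proposition \ref{prop:etGAGA}); one then descends back along the modification using Stein factorizations (Lemmas \ref{lem:stein_analytic} and \ref{lem:analy_noeth_ind}), conductor-square pushouts of analytic stacks (Lemmas \ref{lem:pushouts_analy}, \ref{lem:conductor_anal}, \ref{lem:inj_surj_analy}), and the d\'evissage Propositions \ref{prop:surj_cpoints_dev_case} and \ref{prop:fet_dev}. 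None of this machinery can be replaced by a compactification of $X$.
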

Theorem \ref{thm:GAGA_stacks} admits a useful
variant. For a locally of finite type algebraic $\C$-stack 
$X$, define the full subcategory $\pfdSHV{X} \subset \pscSHV{X}$ to
consist of those pseudosheaves $(Z\xrightarrow{s} X,F)$, where the
algebraic stack $Z$ is Deligne-Mumford. For an analytic stack
$\AN{X}$, one may similarly define a full 
subcategory $\pfdSHV{\AN{X}} \subset \pscSHV{\AN{X}}$. There is
an induced analytification functor:
\[
\widetilde{\Psi}_X : \pfdSHV{X} \to \pfdSHV{\an{X}}. 
\]
\begin{mainthms}\label{thm:GAGA_stacks_VAR1}
  For any locally of finite type algebraic $\C$-stack $X$ with
  quasi-compact and separated diagonal, the analytification functor  
  \[
  \widetilde{\Psi}_X : \pfdSHV{X} \to \pfdSHV{\an{X}}
  \]
  is an equivalence.
\end{mainthms}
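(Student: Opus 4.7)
The natural plan is to reduce Theorem \ref{thm:GAGA_stacks_VAR1} to Theorem \ref{thm:GAGA_stacks} by restricting attention to the Deligne-Mumford locus $X^{\mathrm{DM}} \subset X$. For a locally of finite type algebraic $\C$-stack, the locus where the inertia stack $\mathcal{I}_X \to X$ is unramified---equivalently, where $X$ is Deligne-Mumford---is an open substack; this is standard. As an open substack, $X^{\mathrm{DM}}$ inherits the property of having quasi-compact and separated diagonal, so Theorem \ref{thm:GAGA_stacks} applies directly to it.

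The key observation is that every object of $\pfdSHV{X}$ factors through $X^{\mathrm{DM}}$. Given a pseudosheaf $(Z \xrightarrow{s} X, F)$ with $Z$ Deligne-Mumford and proper and $s$ representable, quasi-finite, and separated, $s$ is automatically finite. For a geometric point $z \in Z$ with image $x = s(z)$, the fiber $Z \times_X \spec\C$ is a zero-dimensional quasi-finite algebraic space containing an $\mathrm{Aut}(x)$-orbit of dimension $\dim \mathrm{Aut}(x) - \dim \mathrm{Aut}(z)$ through $z$ (representability of $s$ identifies $\mathrm{Aut}(z)$ with the stabilizer); since $Z$ is DM, $\mathrm{Aut}(z)$ is zero-dimensional, forcing $\mathrm{Aut}(x)$ to be zero-dimensional and hence finite. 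In characteristic zero a finite group scheme is \'etale, so $\mathrm{Aut}(x)$ is unramified and $x \in X^{\mathrm{DM}}$. Hence the image of $s$ lies in $X^{\mathrm{DM}}$, and openness lets $s$ factor as $Z \to X^{\mathrm{DM}} \hookrightarrow X$. The same argument handles the finite $X$-morphisms appearing in morphisms of pseudosheaves, producing an equivalence $\pfdSHV{X^{\mathrm{DM}}} \xrightarrow{\sim} \pfdSHV{X}$. Moreover, since $X^{\mathrm{DM}}$ is Deligne-Mumford, any representable morphism to it automatically has Deligne-Mumford source, so $\pfdSHV{X^{\mathrm{DM}}} = \pscSHV{X^{\mathrm{DM}}}$.

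The analogous construction on the analytic side gives $\pfdSHV{\an{X}} = \pscSHV{(\an{X})^{\mathrm{DM}}}$. What remains is the compatibility $\an{(X^{\mathrm{DM}})} = (\an{X})^{\mathrm{DM}}$; this is a local statement on the inertia, reducing to the fact that a quasi-finite morphism of schemes is unramified if and only if its analytification is. Granted this, Theorem \ref{thm:GAGA_stacks} applied to $X^{\mathrm{DM}}$ delivers exactly the desired equivalence $\widetilde{\Psi}_X$.

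The main obstacle is verifying the two geometric inputs: openness of the DM locus for an algebraic $\C$-stack with quasi-compact and separated diagonal, and its commutation with analytification. Both require careful bookkeeping with the inertia stack and its automorphism group schemes, but once established the proof is formal.
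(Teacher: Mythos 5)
Your reduction is exactly the one the paper itself uses: the introduction derives Theorem \ref{thm:GAGA_stacks_VAR1} from Theorem \ref{thm:GAGA_stacks} via the open Deligne--Mumford (equivalently, quasi-finite-diagonal) substack $X^0 \subset X$, and the ``General case'' step of the proof in \S\ref{sec:nsGAGA} performs the same factorization of $s$ through the analytification of $X^0$ using finiteness of the stabilizers of the source. One correction: your aside that $s$ is ``automatically finite'' is false---this is precisely what fails in the non-separated setting (e.g.\ the open immersion of $\Pr^1$ into $\Pr^1$ with a doubled point is quasi-finite and separated with proper source but not finite)---though nothing in your subsequent orbit-dimension argument actually uses it.
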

This follows from Theorem \ref{thm:GAGA_stacks}, since for any locally
of finite type algebraic $\C$-stack $X$ with quasi-compact and
separated diagonal, there is an open substack $X^0 \subset X$ which is
Deligne-Mumford such that the inclusion $X^0 \hookrightarrow X$
induces an equivalence $\pscSHV{X^0} \cong  \pfdSHV{X}$. Also, if an
algebraic stack $X$ has affine stabilizers, then $\pfdSHV{X} \cong
\pscSHV{X}$. Thus Theorem \ref{thm:GAGA_stacks} can be trivially 
strengthened to show that the functor $\psi_X$ is an equivalence for
algebraic stacks with affine stabilizers.   

What follows will be made precise in
\S\ref{sec:applications_GAGA}. Let $g>1$ and $n\geq 0$. Take
${M}_{g,n}$ (resp. $\AN{M}_{g,n}$) to be 
the algebraic $\C$-stack (resp. analytic stack) of smooth,
$n$-pointed, algebraic (resp. analytic) curves of genus $g$. An
algebraic (resp. analytic) modular compactification of ${M}_{g,n}$
(resp. $\AN{M}_{g,n}$) is a proper algebraic (resp. analytic) stack
$N$ (resp. $\AN{N}$), with finite stabilizers, possessing 
a ``modular'' interpretation, together with a diagram of dense open
immersions $M_{g,n} \hookleftarrow V \hookrightarrow N$ 
(resp. $\AN{M}_{g,n} \hookleftarrow \AN{V} \hookrightarrow  
\AN{N} $). Theorem \ref{thm:GAGA_stacks_VAR1} implies
\begin{mainthms}\label{thm:mod_applications}
  Analytic modular compactifications of
  $\AN{M}_{g,n}$ are algebraizable to algebraic modular compactifications
  of $M_{g,n}$.  
\end{mainthms}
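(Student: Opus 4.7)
The plan is to realize the analytic modular compactification $\AN{N}$ as an analytic pseudosheaf over a suitable algebraic ambient stack, algebraize it by Theorem~\ref{thm:GAGA_stacks_VAR1}, and then transport the modular structure using Theorem~\ref{thm:homs_GAGA}. Concretely, I would exploit the modular interpretation of $\AN{N}$ to produce a comparison morphism $\rho\colon \AN{N}\to \an{P}$, where $P$ is a fixed locally of finite type algebraic $\C$-stack with quasi-compact and separated diagonal whose moduli interpretation is a coarsening of that of $\AN{N}$---the prototypical choice is an appropriate Deligne--Mumford compactification of $M_{g,n}$, with $\rho$ given by passing to the stable reduction of the universal family on $\AN{N}$. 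Because $\AN{N}$ has finite stabilizers and the refinement encoded by $\AN{N}$ over $\an{P}$ parametrizes only discrete extra data, $\rho$ should be representable by analytic spaces, quasi-finite, and separated. Since $\AN{N}$ is a proper analytic stack, the pair $(\AN{N}\xrightarrow{\rho}\an{P},\Orb_{\AN{N}})$ is then an object of $\pfdSHV{\an{P}}$. Applying Theorem~\ref{thm:GAGA_stacks_VAR1} to $P$ algebraizes this data to a pseudosheaf $(N\xrightarrow{r} P,F)$ with $\an{N}\simeq \AN{N}$, and $N$ is automatically a proper Deligne--Mumford $\C$-stack with finite stabilizers.

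Next, I would verify that $N$ inherits the structure of an algebraic modular compactification of $M_{g,n}$. The open immersion $\AN{V}\hookrightarrow \AN{N}$ transfers through analytification by the full faithfulness in Theorem~\ref{thm:GAGA_stacks_VAR1} (open immersions are a special case of quasi-finite separated morphisms), so it algebraizes to an open immersion $V\hookrightarrow N$. Similarly, Theorem~\ref{thm:homs_GAGA} algebraizes the inclusion $\AN{V}\hookrightarrow \AN{M}_{g,n}\simeq \an{M}_{g,n}$ to $V\hookrightarrow M_{g,n}$. Density is preserved and reflected by analytification, so both inclusions are dense. The algebraic modular interpretation of $N$ is then obtained by transporting the universal family and any auxiliary modular data from $\AN{N}$ to $N$ using Theorems~\ref{thm:GAGA_stacks_VAR1} and~\ref{thm:homs_GAGA} applied to the relevant classifying morphisms.

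The main obstacle is the very first step: constructing $\rho$ and verifying its representability, quasi-finiteness, and separatedness. This reduces to showing that the analytic modular problem of $\AN{N}$ is a finite-to-one refinement of an algebraically defined moduli problem, a hypothesis that should be implicit in the precise formulation of modular compactification given in~\S\ref{sec:applications_GAGA}. Once this comparison morphism is in place, Theorems~\ref{thm:GAGA_stacks_VAR1} and~\ref{thm:homs_GAGA} produce the algebraization and the compatible modular structure essentially mechanically.
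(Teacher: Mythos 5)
The gap you flag in your last paragraph is real, and your proposed resolution of it does not work; this is precisely where the content of the paper's argument lies. You take the ambient stack $P$ to be a Deligne--Mumford compactification such as $\overline{M}_{g,n}$ and propose to define $\rho:\AN{N}\to\an{P}$ by ``stable reduction of the universal family.'' Stable reduction is not functorial in families without base change, so this does not define a morphism of stacks; and even in cases where an actual comparison morphism with $\overline{M}_{g,n}$ exists (e.g.\ the contraction of elliptic tails onto the space of pseudo-stable curves in the Hassett--Keel program), it is a birational contraction rather than a quasi-finite map, so $(\AN{N}\to\an{P},\Orb_{\AN{N}})$ would not be a pseudosheaf. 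The paper avoids this entirely by building the comparison morphism into the definition: a modular compactification is \emph{defined} (in \S\ref{sec:applications_GAGA}) as an open immersion $\jmath:\AN{N}\hookrightarrow\AN{\curv}_n$ into the analytic stack of \emph{all} $n$-pointed curves, whose algebraic counterpart $\curv_n$ is an algebraic stack, locally of finite presentation with quasi-compact and separated diagonal, by the appendix to Smyth's paper. An open immersion is in particular representable, locally quasi-finite and separated, so $(\AN{N}\to\AN{\curv}_n,\Orb_{\AN{N}})$ is a pseudosheaf for free.

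The other ingredient you are missing is that one must still identify $\AN{\curv}_n$ with $\an{(\curv_n)}$ before Theorem \ref{thm:GAGA_stacks_VAR1} can be applied; this is Theorem \ref{thm:anal_curves}, itself a nontrivial GAGA statement proved via Corollary \ref{cor:conrad_crit}, Bingener's representability of the diagonal of the analytic Hilbert and Hom functors, and Lemma \ref{lem:EXTRAUSEFUL1} together with one-dimensional GAGA over local artinian bases. Once these two points are in place, the rest of your argument is essentially the paper's: Theorem \ref{thm:GAGA_stacks_VAR1} algebraizes $\jmath$ to an open immersion $j:N\hookrightarrow\curv_n$ with $N$ proper Deligne--Mumford, and the dense open locus $V$ is obtained simply by forming the $2$-fiber square with $M_{g,n}\to\curv_n$ and checking density after analytification; no separate appeal to Theorem \ref{thm:homs_GAGA} is needed for that step.
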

Using Moishezon techniques (as in \cite{MR0260747}) for the
coarse moduli spaces, previously existing technology demonstrates that
it is possible to algebraize the analytic coarse moduli spaces which
are modular compactifications of $\AN{M}_{g,n}$. The Moishezon techniques
fail to algebraize the modular interpretation, as well as the moduli
stack, however. Describing the modular compactifications of $M_{g,n}$ is
an active area of  research in moduli theory, referred to as the
Hassett-Keel program (cf. \cite{2008arXiv0806.3444H},
\cite{MR2500894}, and \cite{2010arXiv1010.3751A}). 

We will now discuss the GAGA principle for \emph{families}. The
Hilbert functor, which parameterizes closed immersions into a fixed
space, always fails to be an algebraic space in the non-separated
setting. This assertion is due to C. Lundkvist and R. Skjelnes 
\cite{MR2369042}. In \cite{JACKRYDHHILBSTKSCH-2010}, a generalization
of the Hilbert functor, the \textbf{Hilbert stack}, was shown to be an
algebraic stack for \emph{non-separated} algebraic stacks. For an 
algebraic stack $Y$, and a  
locally finitely presented morphism of algebraic stacks with
quasi-compact and separated diagonal $X \to Y$, the Hilbert stack $\HS{X/Y}$
assigns to each $Y$-scheme $T$ the groupoid of quasi-finite and
representable morphisms of algebraic stacks $Z \to X\times_Y T$ such
that the composition $Z \to X\times_Y T \to T$ is proper, finitely
presented, flat, and with finite diagonal.

For a morphism of 
analytic stacks $\AN{X} \to \AN{Y}$, the 
\textbf{analytic Hilbert stack} $\HSAN{\AN{X}/\AN{Y}}$ assigns to each
analytic $\AN{Y}$-space $\AN{T}$, the groupoid of
locally quasi-finite morphisms of analytic stacks $\AN{Z} \to \AN{X}
\times_{\AN{Y}} \AN{T}$, such that the composition $\AN{Z} \to \AN{X}
\times_{\AN{Y}} \AN{T} \to \AN{T}$ is proper and flat with finite diagonal.
Hence, given a morphism $X \to Y$ of locally of finite 
type algebraic $\C$-stacks with quasi-compact and separated diagonal,
there is thus an induced morphism of $\an{Y}$-stacks:    
\[
\Phi_{X/Y} : \an{(\HS{X/Y})}\to  \HSAN{\an{X}/\an{Y}}. 
\]
It is important to note that there are no results in the literature
for analytic spaces which exert that $\HSAN{\an{X}/\an{Y}}$ is an
analytic stack in the case that the morphism $\an{X} \to \an{Y}$ is
non-separated. We also prove the following GAGA result for
\emph{families}. 
\begin{mainthms}\label{thm:main_GAGA}
 Let $X \to Y$ be a quasi-separated morphism of locally of finite type algebraic
 $\C$-spaces. Then the natural map  
  \[
  \Phi_{X/Y} : \an{(\HS{X/Y})}\to  \HSAN{\an{X}/\an{Y}} 
  \]
  is an equivalence of $\an{Y}$-stacks. In particular,
  $\HSAN{\an{X}/\an{Y}}$ is an analytic $\an{Y}$-stack.   
\end{mainthms}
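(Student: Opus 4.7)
The plan is to deduce Theorem~\ref{thm:main_GAGA} from Theorem~\ref{thm:GAGA_stacks}, using the algebraicity of $\HS{X/Y}$ established in \cite{JACKRYDHHILBSTKSCH-2010}. Pick a smooth surjection $U \to \HS{X/Y}$ from a scheme $U$. Analytifying yields a smooth surjection $\an{U} \to \an{(\HS{X/Y})}$ of analytic $\an{Y}$-stacks, and classifying the analytification of the universal algebraic family gives a morphism $\beta : \an{U} \to \HSAN{\an{X}/\an{Y}}$ fitting into a commutative triangle with $\Phi_{X/Y}$. The theorem reduces to two assertions: (i) the canonical map $\an{U} \times_{\an{(\HS{X/Y})}} \an{U} \to \an{U} \times_{\HSAN{\an{X}/\an{Y}}} \an{U}$ is an equivalence, and (ii) $\beta$ is a smooth surjection representable by analytic spaces. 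Granting (i)--(ii), $\HSAN{\an{X}/\an{Y}}$ inherits the structure of an analytic $\an{Y}$-stack and $\Phi_{X/Y}$ is an equivalence.

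Assertion (i) is the fully faithful part of $\Phi_{X/Y}$: it encodes the agreement of algebraic and analytic $\underline{\Isom}$-sheaves. For two algebraic families $Z_1, Z_2$ over a scheme $T_0 \to Y$, an isomorphism $Z_1 \cong Z_2$ over $X \times_Y T_0$ is precisely the data of an isomorphism of pseudosheaves $(Z_i, \Orb_{Z_i})$ on $X \times_Y T_0$, and Theorem~\ref{thm:GAGA_stacks} applied to the algebraic space $X \times_Y T_0$ (which has quasi-compact, separated diagonal by the quasi-separatedness hypothesis) identifies algebraic and analytic such data. Working analytic-locally on a general analytic base $T$ and invoking the sheaf property of Isom promotes this to the required isomorphism of Isom sheaves.

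Assertion (ii), the smooth surjectivity of $\beta$, is the main obstacle. Given $\tau = (\an{Z} \to \an{X} \times_{\an{Y}} T) \in \HSAN{\an{X}/\an{Y}}(T)$ and a point $t \in T$ with image $y \in \an{Y}$, I would proceed in three steps. First, apply Theorem~\ref{thm:GAGA_stacks} to the fiber $(\an{Z})_t \to \an{X}_y$, a proper analytic pseudosheaf on $\an{X}_y$, to algebraize it to a pseudosheaf $Z_0$ on $X_y$ corresponding to a $\C$-point $p \in U$. Second, observe that the restriction of $\tau$ to each finite-order infinitesimal neighborhood of $t$ is canonically algebraic (since deformations of $Z_0$ over an artinian base are intrinsically algebraic), producing a formal morphism $\spec \hat{\Orb}_{T,t} \to \HS{X/Y}$ which by smoothness of $U \to \HS{X/Y}$ lifts to $\spec \hat{\Orb}_{T,t} \to U$ near $p$. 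Third, apply analytic Artin approximation at $t$ to produce an actual analytic morphism $T_{\mathrm{loc}} \to \an{U}$ from a neighborhood of $t$, and use (i) on the formal neighborhood to identify the pullback of the universal family with $\tau|_{T_{\mathrm{loc}}}$.

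The hard part is the essential surjectivity step: transporting the analytic family parametrized by an analytic base $T$ into the algebraic deformation theory of $U$ via formal GAGA and Artin approximation. The non-properness of $\an{X} \to \an{Y}$ prevents a naive application of formal GAGA to the ambient space, and one must instead leverage the properness of the pseudosheaves $(\an{Z})_t$ to reduce all formal--analytic comparisons to settings where Theorem~\ref{thm:GAGA_stacks} applies fiberwise.
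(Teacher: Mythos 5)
Your two main inputs---the algebraicity of $\HS{X/Y}$ from \cite{JACKRYDHHILBSTKSCH-2010} and the fiberwise/artinian application of Theorem \ref{thm:GAGA_stacks}---are exactly the inputs of the paper's proof (via Lemma \ref{lem:artinian_GAGAstacks}). But your argument has a genuine gap: nowhere do you establish that the relevant analytic $\Isom$-sheaves, equivalently the diagonal of $\HSAN{\an{X}/\an{Y}}$, are representable by analytic spaces. This is not a formality. In (i), comparing the algebraic and analytic $\Isom$-sheaves over a general base cannot be done by ``invoking the sheaf property'': your GAGA input only gives agreement over bases finite over $\C$ (for a general $T_0$ the fibers $Z_i$ are proper over $T_0$ but not over $\C$, so Theorem \ref{thm:GAGA_stacks} does not apply to $X\times_Y T_0$ as you claim), and passing from agreement on local artinian points to an isomorphism of sheaves over analytic bases requires knowing that \emph{both} sides are analytic spaces, so that the infinitesimal lifting criterion for \'etale morphisms applies. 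The left side is analytic because $\HS{X/Y}$ is algebraic; the right side requires Bingener's representability of analytic $\Hom$/section spaces, which the paper packages as Proposition \ref{prop:analsec} and Corollary \ref{cor:rep_diag_anal}. The same representability is needed even to formulate your assertion (ii) (the fiber product $\an{U}\times_{\HSAN{\an{X}/\an{Y}}}T$ must be shown to be an analytic space) and to set up your approximation problem, whose unknowns include an isomorphism of analytic families.

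Second, your third step does not close: analytic Artin approximation produces a map $T_{\mathrm{loc}}\to\an{U}$ agreeing with the formal lift only modulo $\mathfrak{m}_t^c$ for a chosen $c$, so the pulled-back family agrees with $\tau$ only to finite order at $t$, not on an actual neighborhood; upgrading this requires openness of the isomorphism locus, which again rests on the representability above. The paper sidesteps the entire smooth-cover-plus-approximation route via Corollary \ref{cor:conrad_crit}: once the source is an analytic stack, the target has representable diagonal, and the map is an equivalence on local artinian points, the comparison map is a bijective \'etale morphism and hence an isomorphism---no approximation, and no by-hand algebraization over analytic neighborhoods, is needed. If you supply the diagonal representability, your argument can be repaired, but at that point it collapses into the paper's.
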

We wish to emphasize that Theorems
\ref{thm:homs_GAGA}--\ref{thm:GAGA_stacks_VAR1} and
\ref{thm:main_GAGA} are completely new, even in the case where $X$ is 
a \emph{scheme}.   
\begin{rem}
  The arguments in this paper for Theorem \ref{thm:main_GAGA}
  generalize readily to the setting of a morphism of algebraic stacks $X
  \to Y$. What obstructs a full proof of Theorem \ref{thm:main_GAGA}
  in this generality is a lack of foundations for analytic stacks. More
  precisely, we would require an analog of Corollary
  \ref{cor:rep_diag_anal} for analytic stacks, for which no precise
  reference exists in the current literature. 
\end{rem}
\tableofcontents
\subsection{Notation}
 We introduce some notation here that will be used throughout the
paper. For a category $\Coll$ and $X \in \obj\Coll$, we have the
\textbf{slice} category $\Coll/X$, with objects the morphisms $V \to
X$ in $\Coll$, and morphisms commuting diagrams over $X$, which are
called $X$-morphisms. If the category $\Coll$ has finite limits and $f : Y
\to X$ is a morphism in $\Coll$, then for $(V\to X) \in
\obj(\Coll/X)$, define $V_Y := V \times_X Y$. Given a morphism $p : V' 
\to V$, there is an induced morphism $p_Y : V'_Y \to V_Y$. There is
usually an induced functor $f^* : \Coll/X \to \Coll/Y : (V \to X) \mapsto (V_Y
\to Y)$. Given a $(2,1)$-category $\Coll'$, these notions readily
generalize. 

Given a ringed space $U := (|U|,\Orb_U)$, a sheaf of ideals
$\mathscr{I} \ideal \Orb_U$, and a morphism of ringed spaces $g : V
\to U$, we define the \textbf{pulled back ideal} $\mathscr{I}_V = 
\im(g^*\mathscr{I} \to \Orb_V)\ideal \Orb_V$. 

Fix a scheme $S$, then an algebraic $S$-space is a sheaf $F$ on the big
\'etale site of $S$, $\BETSITE{\SCH{S}}$, such that the diagonal morphism
$\Delta_F : F \to F \times_S F$ is represented by schemes, and there
is a smooth surjection $U \to F$ from an $S$-scheme $U$. An
algebraic $S$-stack is a stack $H$ on $\BETSITE{\SCH{S}}$, such that
the diagonal morphism $\Delta_H : H \to H\times_S H$ is represented
by algebraic $S$-spaces and there is a smooth surjection $U \to H$
from an algebraic $S$-space $U$. We make no separation
assumptions on our algebraic stacks, but all
algebraic stacks that are used in this paper will possess
quasi-compact and separated diagonals, thus all of the results of
\cite{MR1771927} apply. 

Let $\ANABS$ denote the category of analytic spaces. The \'etale
topology on $\ANABS$ is the Grothen\-dieck pretopology generated by local
analytic isomorphisms. An \textbf{analytic stack} is an \'etale stack
over $\ANABS$, with diagonal representable by analytic spaces, admitting a
smooth surjection from an analytic space. 
\section{Examples}
We feel that it is instructive to provide examples illustrating the
necessity of some of the hypotheses of the results contained in this 
paper. 
\begin{ex}
  Let $X$ be the Deligne-Mumford stack $B\Z$, and fix an elliptic
  $\C$-curve $E$. Observe that $H^1(\et{E},\Z) = 0$ and
  $H^1(E(\C),\Z) \neq 0$. Hence, there is a map of analytic stacks
  $\an{E} \to B\an{\Z}$ which is not algebraizable to a map of
  algebraic $\C$-stacks $E \to B\Z$. In particular, we conclude that
  the quasi-compact diagonal assumption which appears in Theorem
  \ref{thm:homs_GAGA} is necessary.
\end{ex}
\begin{ex}
  Fix an elliptic $\C$-curve $E$. Then $H^1(E(\C),E(\C)) =
  E(\C)^{\oplus 2}$ and $H^1(\et{E},E) = E(\C)^{\oplus
    2}_{\mathrm{tors}}$. Let $[\gamma] \in
  H^1(E(\C),E(\C))\setminus H^1(\et{E},E) \neq \emptyset$. Then the
  cohomology class $[\gamma]$ corresponds to an analytic map $\an{E}
  \to B\an{E}$, which is not algebraizable to a map of algebraic $\C$-stacks
  $E \to BE$. Thus, we see that a strengthening of Theorem
  \ref{thm:homs_GAGA} to the setting where $X$ is separated, but not
  Deligne-Mumford, is not possible. 
\end{ex}
Note that given a projective $\C$-scheme $X$
and a locally quasi-finite analytic $\an{X}$-space $\AN{Z}$, it is
easy to produce examples where $\AN{Z}$ is non-algebraizable when
$\AN{Z}$ is not compact. Indeed, analytic open subsets $\an{X}$ which
do not arise as Zariski open subsets of $X$ are such examples. 
\begin{ex}
  Consider $X:=\Pr^1_\C$ and one of its coordinate patches $\imath:\Aff^1_\C
  \hookrightarrow X$. Let $\AN{U} \hookrightarrow \Aff^1_\C$ be the
  complement of the closed unit disc. Note that $U$ is an analytic
  space, and the image of $\AN{U}$ in $\an{X}$ is also open. Let $\AN{Z} =
  \an{X} \amalg_{\AN{U}} \an{X}$, taken in the category of ringed
  spaces. Then $\AN{Z}$ is a compact, but non-Hausdorff analytic
  space. Indeed, $\AN{Z}$ is obtained by gluing two analytic spaces
  along an open analytic subspace, and there is a continuous
  surjection $\an{X}\amalg \an{X} \to \AN{Z}$.   There is
  also a local analytic isomorphism of analytic
  spaces $\AN{Z} \to \an{X}$, but clearly $\AN{Z}$ is not
  algebraizable. In particular, we conclude that the separatedness of
  the map $\AN{Z} \to \an{X}$ is necessary in the definition of
  pseudosheaves, if we would like Theorem \ref{thm:GAGA_stacks} to
  hold. 
\end{ex}
\begin{ex}
  In \cite[App. B]{MR0463157}, there is given an example of a smooth
  projective surface $Y$ over $\C$, with an open subscheme
  $\imath : U\hookrightarrow Y$ which posseses a pair of
  non-isomorphic line bundles $\mathscr{L}$ and $\mathscr{M}$ such
  that the analytified line bundles $\an{\mathscr{L}}$ and
  $\an{\mathscr{M}}$ are isomorphic. Note that since 
  $\mathscr{L}$ (resp. $\mathscr{M}$) is $\Orb_U$-coherent, there is
  a coherent $\Orb_X$-module $\bar{\mathscr{L}}$
  (resp. $\bar{\mathscr{M}}$) together with an isomorphism
  $\jmath^*\bar{\mathscr{L}} \cong \mathscr{L}$
  (resp. $\jmath^*\bar{\mathscr{M}} \cong  \mathscr{M}$). Define the
  smooth, universally closed, and finite type $\C$-scheme $X$ by
  gluing two copies of $Y$ along $U$. Let $p$, $q : Y
  \rightrightarrows X$ denote the two different inclusions of $Y$ into
  $X$. On $\an{Y}$ we have two coherent
  $\Orb_{\an{Y}}$-modules $\an{\bar{\mathscr{L}}}$ and
  $\an{\bar{\mathscr{M}}}$. Note that since there is an induced
  analytic isomorphism $\an{\jmath}^*\an{\bar{\mathscr{L}}} \cong 
  \an{\jmath}^*\an{\bar{\mathscr{M}}}$, we may obtain a coherent
  $\Orb_{\an{X}}$-module $\AN{F}$ such that $\an{p}^*\AN{F} \cong
  \an{\bar{\mathscr{L}}}$ and $\an{q}^*\AN{F} \cong
  \an{\bar{\mathscr{M}}}$. If $\AN{F}$ is algebraizable, then there is
  a coherent $\Orb_Y$-module $F$ together with an analytic isomorphism
  $\an{F} \cong \AN{F}$. In particular, we see that there are 
  induced analytic isomorphisms of coherent sheaves on $\an{Y}$:
  \[
  \an{(p^*F)} \cong \an{p}^*\AN{F} \cong \an{\bar{\mathscr{L}}} \qquad
  \mbox{and} \qquad  \an{(q^*F)} \cong \an{q}^*\AN{F} \cong
  \an{\bar{\mathscr{M}}}. 
  \]
  Since $Y$ is a projective $\C$-scheme,  by GAGA \cite[Exp. XII,
  Thm. 4.4]{SGA1}, the induced 
  isomorphism  $\an{(p^*F)} \cong \an{\bar{\mathscr{L}}}$ (resp.
  $\an{(p^*F)}  \cong \an{\bar{\mathscr{L}}}$) is uniquely
  algebraizable to an algebraic isomorphism $p^*F \cong
  \bar{\mathscr{L}}$ (resp. $q^*F \cong \bar{\mathscr{M}}$). However,
  this implies an isomorphism:
  \[
  \mathscr{L} \cong \jmath^*\bar{\mathscr{L}} \cong \jmath^*p^*F \cong
  \jmath^*q^*F \cong \jmath^*\bar{\mathscr{M}} \cong \mathscr{M},
  \]
  which is a contradiction. Hence, $\AN{F}$ is not algebraizable.
  From $\AN{F}$, we can construct the finite and non-algebraizable
  $\Orb_{\an{X}}$-algebra $\Orb_{\an{X}}[\AN{F}]$. Taking the analytic
  spec of this algebra produces a finite homeomorphism $\AN{Z} \to
  \an{X}$ such that $\AN{Z}$ is non-algebraizable. Hence, it is
  necessary for $\AN{Z}$ to be proper, if we would like any locally
  quasi-finite and separated map $\AN{Z} \to \an{X}$ to be algebraizable.
\end{ex}
\section{Preliminaries}\label{sec:sepGAGA}
In this section, we will recall and prove some results for separated
spaces. The generalizations of the classical GAGA results to separated
algebraic stacks will be dealt with in Appendix \ref{app:clGAGA}. For
a morphism of analytic spaces  
$\AN{X} \to\AN{Y}$, the analytic Hilbert functor
$\HILBAN{\AN{X}/\AN{Y}}$ assigns to each analytic $\AN{Y}$-space
$\AN{T}$, the set of isomorphism classes of closed immersions $\AN{Z}
\hookrightarrow \AN{X}_{\AN{T}}$ such that the composition $\AN{Z} \to 
\AN{T}$ is proper and flat. If $\AN{U}$ is another analytic
$\AN{Y}$-space, then one may also define the
functor $\Hom_{\AN{Y}}(\AN{U},\AN{X})$, which assigns to each analytic 
$\AN{Y}$-space $\AN{T}$, the set of $\AN{T}$-morphisms
$\AN{U}_{\AN{T}} \to \AN{X}_{\AN{T}}$. 

In the case that the morphism $\AN{X} \to
\AN{Y}$ is separated, then J. Bingener \cite{MR597743} showed that the
functor $\HILBAN{\AN{X}/\AN{Y}}$ naturally has the structure of a
separated analytic $\AN{Y}$-space. If, in addition, $\AN{U}$ is proper
and flat over $\AN{Y}$, Bingener also proved [\emph{loc.\ cit.}] that
$\Hom_{\AN{Y}}(\AN{U},\AN{X})$ is a separated analytic
$\AN{Y}$-space.
\begin{rem}
  The analyticity of the absolute Hilbert functor (i.e. when $\AN{Y}$
  is a point), is due to \cite{MR0203082}. 
\end{rem}
The following variant of the $\Hom$-space will be useful in this
paper. Again, we fix an analytic $\AN{Y}$-space. Let $\AN{U}
\to\AN{V}$ be a morphism of analytic $\AN{Y}$-spaces. Define  
$\SEC{\AN{Y}}{\AN{U}/\AN{V}}$ to be the functor which assigns to each
$\AN{Y}$-space $\AN{T}$, the set of analytic sections to the morphism
$\AN{U}_{\AN{T}} \to \AN{V}_{\AN{T}}$. Standard arguments involving
fiber products prove the following 
\begin{prop}\label{prop:analsec}
  Fix an analytic space $\AN{Y}$, and let $\AN{U} \to \AN{V}$ be a
  separated morphism of analytic $\AN{Y}$-spaces, where $\AN{V} \to
  \AN{Y}$ is proper and flat. Then $\SEC{\AN{Y}}{\AN{U}/\AN{V}}$ is a
  separated analytic $\AN{Y}$-space. 
\end{prop}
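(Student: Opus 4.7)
The plan is to realize $\SEC{\AN{Y}}{\AN{U}/\AN{V}}$ as a fiber product of Bingener's $\Hom$-spaces, which reduces the problem to a formal manipulation once Bingener's theorem is in hand. Concretely, a $\AN{T}$-valued section of $\AN{U}_{\AN{T}} \to \AN{V}_{\AN{T}}$ is the same data as an $\AN{T}$-morphism $\AN{V}_{\AN{T}} \to \AN{U}_{\AN{T}}$ whose composition with $\AN{U}_{\AN{T}} \to \AN{V}_{\AN{T}}$ equals the identity of $\AN{V}_{\AN{T}}$. Thus I would identify
\[
\SEC{\AN{Y}}{\AN{U}/\AN{V}} \;\cong\; \Hom_{\AN{Y}}(\AN{V},\AN{U}) \times_{\Hom_{\AN{Y}}(\AN{V},\AN{V})} \AN{Y},
\]
where one map is induced by post-composition with $\AN{U} \to \AN{V}$, and the other is the morphism $\AN{Y} \to \Hom_{\AN{Y}}(\AN{V},\AN{V})$ classifying the identity $\AN{V} \to \AN{V}$. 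This identification is a direct verification at the level of $\AN{T}$-points.

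Next I would check that Bingener's theorem applies to both $\Hom$-spaces. The source $\AN{V}$ is proper and flat over $\AN{Y}$ by hypothesis, so the relevant hypothesis is the separatedness of the respective targets over $\AN{Y}$. For $\Hom_{\AN{Y}}(\AN{V},\AN{V})$ this is automatic, since $\AN{V} \to \AN{Y}$ is proper. For $\Hom_{\AN{Y}}(\AN{V},\AN{U})$ one combines the separatedness of $\AN{U} \to \AN{V}$ (given) with that of $\AN{V} \to \AN{Y}$ (proper) to conclude that $\AN{U} \to \AN{Y}$ is separated. Hence both $\Hom$-functors are representable by separated analytic $\AN{Y}$-spaces.

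Finally, since the category of analytic $\AN{Y}$-spaces admits fiber products and since a fiber product of separated $\AN{Y}$-spaces is separated, the displayed isomorphism exhibits $\SEC{\AN{Y}}{\AN{U}/\AN{V}}$ as a separated analytic $\AN{Y}$-space, completing the proof. The only real content beyond bookkeeping is the functorial identification of the sections with the fiber product; the main potential obstacle is keeping the separatedness hypotheses straight so as to legitimately invoke Bingener's representability result on both factors, but no deeper analytic input is needed.
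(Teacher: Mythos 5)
Your argument is correct and is precisely the ``standard argument involving fiber products'' that the paper invokes without spelling out: you realize $\SEC{\AN{Y}}{\AN{U}/\AN{V}}$ as the fiber of the post-composition map $\Hom_{\AN{Y}}(\AN{V},\AN{U}) \to \Hom_{\AN{Y}}(\AN{V},\AN{V})$ over the identity section, and your bookkeeping of the separatedness hypotheses needed to invoke Bingener's representability theorem on both factors is accurate. Nothing further is needed.
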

We will record for future reference the following
\begin{cor}\label{cor:rep_diag_anal}
  Let $\AN{X} \to \AN{Y}$ be a morphism of analytic stacks such that
  the diagonal map $\Delta_{\AN{X}/\AN{Y}}$ is a monomorphism. Then the
  diagonal map $\Delta_{\HSAN{\AN{X}/\AN{Y}}}$ is representable by
  separated analytic spaces. 
\end{cor}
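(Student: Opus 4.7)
The plan is to unwind the definition of the diagonal. Fix an analytic $\AN{Y}$-space $\AN{T}$ and two objects $(\AN{Z}_i \xrightarrow{\sigma_i} \AN{X}_{\AN{T}}) \in \HSAN{\AN{X}/\AN{Y}}(\AN{T})$ for $i = 1, 2$. I need to show that the $\AN{T}$-sheaf
\[
\underline{\mathrm{Isom}}_{\AN{T}}\bigl((\AN{Z}_1,\sigma_1),(\AN{Z}_2,\sigma_2)\bigr)
\]
is representable by a separated analytic $\AN{T}$-space. Because $\Delta_{\AN{X}/\AN{Y}}$ is a monomorphism, any $2$-isomorphism between $\sigma_2 \circ \phi$ and $\sigma_1$ is unique if it exists, so this $\underline{\mathrm{Isom}}$ sheaf is the subsheaf of $\AN{T}$-isomorphisms $\phi : \AN{Z}_1 \to \AN{Z}_2$ for which $\sigma_2 \circ \phi = \sigma_1$; equivalently, it is the sheaf of $\AN{X}_{\AN{T}}$-isomorphisms between $\AN{Z}_1$ and $\AN{Z}_2$.

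Next I would construct the sheaf of $\AN{X}_{\AN{T}}$-morphisms as an analytic space. Because $\Delta_{\AN{X}/\AN{Y}}$ is a monomorphism, the $2$-fibre product $\AN{Z}_1 \times_{\AN{X}_{\AN{T}}} \AN{Z}_2$ is a monomorphism into the analytic space $\AN{Z}_1 \times_{\AN{T}} \AN{Z}_2$ and is therefore representable by an analytic space, with the projection to $\AN{Z}_1$ separated (being the base change of the monomorphism $\Delta_{\AN{X}/\AN{Y}}$ composed with a separated morphism). Since $\AN{Z}_1 \to \AN{T}$ is proper and flat, Proposition \ref{prop:analsec} supplies a separated analytic $\AN{T}$-space
\[
\AN{H}_{12} := \SEC{\AN{T}}{(\AN{Z}_1 \times_{\AN{X}_{\AN{T}}} \AN{Z}_2)/\AN{Z}_1}
\]
representing the functor of $\AN{X}_{\AN{T}}$-morphisms $\AN{Z}_1 \to \AN{Z}_2$. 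Defining $\AN{H}_{21}$ symmetrically, one has $\underline{\mathrm{Isom}}_{\AN{T}} \subset \AN{H}_{12} \times_{\AN{T}} \AN{H}_{21}$ as the locus where the two compositions are the identities.

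Finally, I would check that this locus is actually representable by a closed analytic subspace. Over the universal base $\AN{W} := \AN{H}_{12} \times_{\AN{T}} \AN{H}_{21}$, the two universal morphisms $\AN{Z}_{1,\AN{W}} \rightrightarrows \AN{Z}_{1,\AN{W}}$ (the composition $\psi \circ \phi$ and the identity) coincide on the pullback of $\Delta_{\AN{Z}_1/\AN{T}}$; because $\AN{Z}_1 \to \AN{T}$ has finite (hence closed-immersion) diagonal, this pullback is a closed subspace $E \subset \AN{Z}_{1,\AN{W}}$. Properness of $\AN{Z}_{1,\AN{W}} \to \AN{W}$ then makes the locus in $\AN{W}$ where $E = \AN{Z}_{1,\AN{W}}$ a closed analytic subspace (as the complement of the image of the closed subset $\AN{Z}_{1,\AN{W}} \setminus E$ under a proper, hence closed, map); the symmetric condition cuts out a further closed subspace. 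The result is a closed, hence separated, analytic subspace of $\AN{W}$, proving the claim.

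The main obstacle is Step 2: confirming, within the foundations of analytic stacks being used here, that the $2$-fibre product $\AN{Z}_1 \times_{\AN{X}_{\AN{T}}} \AN{Z}_2$ really is an analytic space separated over $\AN{Z}_1$. This is exactly where the monomorphism hypothesis on $\Delta_{\AN{X}/\AN{Y}}$ is used, and it is the input required to bring Proposition \ref{prop:analsec} to bear; once it is in hand, everything else reduces to formal properties of proper maps with finite diagonal.
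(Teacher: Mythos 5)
Your first two steps coincide with the paper's argument: after noting (as you do implicitly) that the monomorphism hypothesis on $\Delta_{\AN{X}/\AN{Y}}$ forces $\AN{Z}_1$ and $\AN{Z}_2$ to be analytic \emph{spaces}, both proofs realize the sheaf of $\AN{X}_{\AN{T}}$-morphisms $\AN{Z}_1 \to \AN{Z}_2$ as the section space $\SEC{\AN{T}}{(\AN{Z}_1\times_{\AN{X}_{\AN{T}}}\AN{Z}_2)/\AN{Z}_1}$ and invoke Proposition \ref{prop:analsec}. Where you diverge is in carving the $\Isom$ subfunctor out of this $\Hom$ space, and there your argument has a genuine gap. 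You assert that the locus in $\AN{W}=\AN{H}_{12}\times_{\AN{T}}\AN{H}_{21}$ where $\psi\circ\phi=\mathrm{id}$ is closed because it is ``the complement of the image of the closed subset $\AN{Z}_{1,\AN{W}}\setminus E$ under a proper, hence closed, map.'' But $\AN{Z}_{1,\AN{W}}\setminus E$ is \emph{open} (it is the complement of the closed coincidence locus $E$), and the complement of the image of an open set under a closed map is in general neither open nor closed, so the sentence proves nothing as written. More importantly, even a correct description of the underlying point set would not suffice: you need the \emph{subfunctor} to be representable, i.e., that a morphism $\AN{T}'\to\AN{W}$ factors through the locus if and only if $\psi_{\AN{T}'}\circ\phi_{\AN{T}'}=\mathrm{id}$ after base change. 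That requires equipping the coincidence locus with an analytic structure (for instance via the vanishing locus of the coherent ideal of $E$ pushed forward along the proper flat map $\AN{Z}_{1,\AN{W}}\to\AN{W}$), which is an additional analytic input you have not supplied.

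The paper sidesteps this step entirely: it observes that the inclusion of \'etale $\AN{T}$-sheaves $\Isom_{\HSAN{\AN{X}/\AN{Y}}}(\AN{Z}_1,\AN{Z}_2)\hookrightarrow \SEC{\AN{T}}{(\AN{Z}_1\times_{\AN{X}_{\AN{T}}}\AN{Z}_2)/\AN{Z}_1}$ is representable by \emph{open} embeddings of analytic spaces, by Douady's result that the isomorphism locus in a proper flat family is open \cite[\S10.1, Prop.\ 1]{MR0203082}; since an open subspace of a separated analytic space is separated, the conclusion is immediate. If you prefer to keep your two-sided-inverse formulation, you would need to prove (or cite) the analytic analogue of the statement that the coincidence locus of two morphisms out of a proper flat family into a separated target is a closed analytic subspace of the base; quoting the openness of the isomorphism locus is the shorter and cleaner route.
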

\begin{proof}
  Fix an analytic $\AN{Y}$-space $\AN{T}$. Let $\AN{Z}$, $\AN{Z}'$ be
  locally quasi-finite and separated analytic $\AN{X}\times_{\AN{Y}}
  \AN{T}$-stacks which are both $\AN{T}$-proper and flat. The
  monomorphism condition on the diagonal of the map $\AN{X} \to
  \AN{Y}$ ensures that the diagonals of the maps $\AN{Z}\to\AN{T}$ and
  $\AN{Z}' \to \AN{T}$ are monomorphisms. In particular, the
  analytic stacks $\AN{Z}$ and $\AN{Z}'$ are thus analytic
  \emph{spaces}. It remains to show that
  $\Isom_{\HSAN{\AN{X}/\AN{Y}}}(\AN{Z},\AN{Z}')$ is a separated
  analytic $\AN{T}$-space. There is, however, an  
  inclusion of \'etale $\AN{T}$-sheaves
  \[
  \Isom_{\HSAN{\AN{X}/\AN{Y}}}(\AN{Z},\AN{Z}') \hookrightarrow
  \SEC{\AN{T}}{\AN{Z}\times_{\AN{X}} \AN{Z}'/\AN{Z}},
  \]
  which, by \cite[\S10.1, Prop.\ 1]{MR0203082}, is representable by
  open embeddings of analytic spaces. Applying Proposition
  \ref{prop:analsec}, we conclude the result. 
\end{proof}
Using GAGA for separated and locally of finite type $\C$-stacks (cf. 
Theorem \ref{thm:clGAGA} and Corollary \ref{cor:clGAGA1}), the method
of proof employed in 
Corollary \ref{cor:rep_diag_anal} readily proves the following 
\begin{lem}\label{lem:ff_GAGA}
  For a locally of finite type algebraic $\C$-stack $X$ with
  locally separated diagonal, the analytification functor
  \[
  \Psi_X : \pscSHV{X} \to \pscSHV{\an{X}}
  \]
  is fully faithful.
\end{lem}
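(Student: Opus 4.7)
The plan is to verify fully faithfulness directly. Fix two pseudosheaves $(Z \xrightarrow{s} X, F)$ and $(Z' \xrightarrow{s'} X, F')$ on $X$; I must show that every analytic morphism $(\tau, \psi)$ lifts uniquely to an algebraic one. The essential feature is that $Z$ and $Z'$ are proper algebraic $\C$-stacks, so classical GAGA for separated stacks (Theorem \ref{thm:clGAGA} and Corollary \ref{cor:clGAGA1}) applies. As a first step, applying that GAGA to the $\Hom$ groupoid $\Hom(Z', Z)$ yields a unique algebraic lift $t: Z' \to Z$ of $\tau: \an{Z}' \to \an{Z}$. The morphism $t$ is proper (by cancellation: $Z'$ is proper and $Z$ is separated over $\C$) and quasi-finite (by comparison of closed-point fibers with $\tau$), hence finite.

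The main technical step is to upgrade $t$ to a finite $X$-morphism by lifting the analytic $2$-isomorphism $\alpha^{\mathrm{an}}: \an{s'} \xrightarrow{\sim} \an{s} \circ \tau$ to an algebraic one. For this I form
\[
I := Z' \times_{X \times X,\,(s',\, s \circ t)} X,
\]
the base change of $\Delta_X$ along $(s', s \circ t): Z' \to X \times X$. By the locally separated diagonal hypothesis, $\Delta_X$ is an immersion, hence so is $I \hookrightarrow Z'$; factor this as $I \stackrel{j}{\hookrightarrow} \bar{I} \stackrel{i}{\hookrightarrow} Z'$ with $j$ a closed immersion and $i$ an open immersion. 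The analytic $\alpha^{\mathrm{an}}$ provides an analytic section of $\an{I} \to \an{Z'}$, which forces $\an{i}$ to be surjective and hence an isomorphism; by density of algebraic points in the proper stack $Z'$, one gets $\bar{I} = Z'$. Then $\an{j}$ is an isomorphism, so the defining ideal of $j$ is a coherent sheaf on $Z'$ whose analytification vanishes, and coherent GAGA on $Z'$ forces the ideal to vanish. Hence $I = Z'$, yielding the desired algebraic $2$-isomorphism.

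For the sheaf piece: since $t$ is proper, $\tau_* \an{F'} \cong \an{(t_* F')}$ by compatibility of proper pushforward with analytification, so $\psi$ becomes a morphism of coherent analytic sheaves on $\an{Z}$. Classical GAGA on the proper stack $Z$ yields a unique algebraic lift $\phi: F \to t_* F'$ of $\psi$, completing essential surjectivity on Hom-sets. Faithfulness follows immediately from the uniqueness of each algebraization: two algebraic morphisms $(t_i, \phi_i)$ with the same analytification must agree, by the uniqueness parts of each step. The principal obstacle is the middle step, where the locally separated diagonal hypothesis on $X$ enters in an essential way: without it, the base change $I \to Z'$ need not be an immersion and the coherent GAGA reduction of the section problem would fail.
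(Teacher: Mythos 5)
There is a genuine gap, and it sits in your first step. You algebraize $\tau\colon \an{Z}'\to\an{Z}$ as an \emph{abstract} morphism of proper stacks by ``applying GAGA to the $\Hom$ groupoid $\Hom(Z',Z)$.'' The only such result available (Corollary \ref{cor:clGAGA2}) requires the target to be Deligne--Mumford, but under the hypotheses of the lemma $Z$ need not be Deligne--Mumford: $s\colon Z\to X$ is only representable and quasi-finite, so the stabilizers of $Z$ inject into those of $X$, which may be positive-dimensional (e.g.\ $X=BE$, $Z=BE$). The paper's second example ($\an{E}\to B\an{E}$) shows that abstract $\Hom$-algebraization genuinely fails for non-DM targets, so this step cannot be waved through. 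The fix --- and the route the paper intends, namely the method of Corollary \ref{cor:rep_diag_anal} --- is to never forget the $X$-structure: an $\an{X}$-morphism $\an{Z}'\to\an{Z}$ \emph{is} a section of $\an{(Z\times_X Z')}\to\an{Z}'$. Because $s$ is representable and separated, this projection is representable and separated, so a section is a closed immersion, in particular a finite morphism from the proper $\an{Z}'$ into the analytification of the separated, locally of finite type stack $Z\times_X Z'$ (separated over $\C$ since $Z'$ is proper). Corollary \ref{cor:clGAGA1} then algebraizes it, full faithfulness of Theorem \ref{thm:clGAGA} identifies the algebraization as a section, and Theorem \ref{thm:clGAGA} on $Z$ handles $\phi$. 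This packages your steps 1 and 2 into one move and only ever uses GAGA for \emph{finite} morphisms, which is why no DM hypothesis is needed.

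A secondary error: you assert that the locally separated diagonal hypothesis makes $\Delta_X$ an immersion, hence $I\to Z'$ an immersion. For a stack, $\Delta_X$ an immersion would force $X$ to be an algebraic space; ``locally separated diagonal'' is a condition on $\Delta_{\Delta_X}$. What actually makes your $I\to Z'$ tractable is the representability of $s'$ (trivial relative inertia), which makes $I\to Z'$ a \emph{monomorphism}; local separatedness then controls it further. As written, your closed-immersion-followed-by-open-immersion factorization of $I\hookrightarrow Z'$ is not available, so the coherent-GAGA conclusion of the middle step does not go through in the stated generality. The remaining pieces (properness and quasi-finiteness of $t$, algebraization of $\phi$ via $\an{(t_*F')}\cong\tau_*\an{F'}$ and Theorem \ref{thm:clGAGA}) are fine.
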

Given algebraic $\C$-spaces $U$ and $V$, it is a non-trivial
task to determine how the set of analytic morphisms between $\an{U}$
and $\an{V}$ relates to the set of algebraic morphisms between $U$ and
$V$. Since we are interested in analytifications of  
algebraic stacks, which are defined by moduli problems, such a
relation will be essential for us to proceed, however. 
\begin{lem}\label{lem:EXTRAUSEFUL1}
  Fix a locally of finite type algebraic $\C$-stack $V$ with locally
  separated diagonal and a locally of
  finite type algebraic $V$-stack $U$ with locally separated diagonal.
  Let $S$ be a local artinian $V$-scheme, then the analytification functor
    \[
    \Hom_V(S,U) \to \Hom_{\an{V}}(\an{S},\an{U})
    \]
    is an equivalence of categories.
\end{lem}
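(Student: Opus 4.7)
The plan is to reduce the statement to the case of schemes, where it becomes a standard comparison of finite-order infinitesimal neighborhoods. First, set $W := U \times_V S$: since analytification commutes with fiber products and $W$ is a locally of finite type algebraic $S$-stack with locally separated diagonal, the statement reduces to showing that $W(S) \to \an{W}(\an{S})$ is an equivalence of groupoids.

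The scheme case is the heart of the matter. If $T$ is a locally of finite type $\C$-scheme and $S = \spec A$ for $A$ local artinian, then a morphism $S \to T$ amounts to choosing a point $x \in T(\C)$ together with a local homomorphism $\Orb_{T,x} \to A$, which factors through $\Orb_{T,x}/\fml{m}_x^n$ for some $n$ because $A$ is artinian. A map $\an{S} \to \an{T}$ is analogously determined by a point $x \in \an{T}(\C) = T(\C)$ and a local homomorphism $\Orb_{\an{T},x}/\fml{m}_x^n \to A$. Since the algebraic and analytic local rings at $x$ share a common $\fml{m}_x$-adic completion, their $n$-th order truncations coincide, yielding a bijection $T(S) \xrightarrow{\sim} \an{T}(\an{S})$.

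To pass from schemes to stacks, treat full faithfulness and essential surjectivity separately. For full faithfulness, the locally separated diagonal of $W$ guarantees that each isomorphism sheaf $\Isom_W(f_1,f_2)$ is a locally separated algebraic $S$-space of locally finite type whose analytification is $\Isom_{\an{W}}(\an{f_1},\an{f_2})$; since $S$ is strictly henselian (artinian local with residue field $\C$), any étale cover of such a space by a scheme admits sections over both $S$ and $\an{S}$, reducing the comparison of sections to the scheme case applied to the cover and its self-fiber-product. For essential surjectivity, take a smooth cover $P \to W$ by an algebraic space: given $\sigma \in \an{W}(\an{S})$, lift the underlying $\C$-point of $\sigma$ to $P(\C) = \an{P}(\C)$ by surjectivity, then extend to $\tilde\sigma \in \an{P}(\an{S})$ via the formal smoothness of $\an{P} \to \an{W}$. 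Full faithfulness applied to $P$ produces a unique $\tau \in P(S)$ with $\an{\tau} \cong \tilde\sigma$, whose composition with $P \to W$ yields the desired preimage of $\sigma$ up to isomorphism.

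The main technical obstacle is the $2$-categorical bookkeeping: one must verify that the isomorphism witnessing that the analytification of the constructed algebraic lift agrees with $\sigma$ can be chosen coherently with respect to descent data along the smooth cover, and this is handled by a further application of the full faithfulness already established for the $\Isom$ sheaves. The essential input, however, remains the elementary identification of algebraic and analytic finite-order jets in the scheme case.
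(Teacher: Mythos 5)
Your proposal is correct and follows essentially the same route as the paper: reduce to the scheme case by comparing finite-order truncations of the algebraic and analytic local rings at a $\C$-point, then handle stacks by a smooth cover together with the infinitesimal lifting criterion for essential surjectivity. The only difference is that you spell out full faithfulness via the $\Isom$-sheaves and étale descent over the strictly henselian base, a step the paper dismisses as clear.
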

\begin{proof}
  It suffices to treat the case where $V=\spec \C$. Let $k>0$ be such
  that the maximal ideal $\mathfrak{m}_S$ of the 
  artinian $\C$-algebra $\Gamma(S,\Orb_S)$ satisfies
  $\mathfrak{m}_S^k = 0$. If $U$ is a scheme, then a morphism $S
  \to U$ is equivalent to specifying a point $u\in U(\C)$ and a
  morphism of $\C$-algebras
  $\Orb_{U,u}/\mathfrak{m}_{\Orb_{U,u}}^k \to 
  \Gamma(S,\Orb_S)$. Similarly, an analytic morphism $\an{S}
  \to \an{U}$ is equivalent to specifying a point $u\in \an{U}=U(\C)$,
  as well as a morphism of $\C$-algebras
  $\Orb_{\an{U},u}/\mathfrak{m}_{\Orb_{\an{U},u}}^k \to
  \Gamma(S,\Orb_S)$. Recalling that there is a bijection of
  $\C$-algebras $\Orb_{U,u}/\mathfrak{m}_{\Orb_{U,u}}^k
  \cong \Orb_{\an{U},u}/\mathfrak{m}_{\Orb_{\an{U},u}}^k$ as well as a 
  bijection of sets $\an{U} \to U(\C)$, then we conclude that we're
  done in the case that $U$ is a scheme. If $U$ is an algebraic stack,
  let $U' \to U$ be a smooth cover by a scheme $U'$.  Since $S$ is local 
  artinian, then the lifting criterion for smoothness implies that the
  analytification functor $\Hom(S,U) \to
  \Hom(\an{S},\an{U})$ is essentially surjective. That the
  analytification functor is fully faithful is clear.
\end{proof}
\begin{lem}\label{lem:useful2}
 Fix an analytic stack $\AN{W}$, and consider a morphism of analytic
 $\AN{W}$-stacks $p : \AN{U} \to \AN{V}$. Suppose 
  that for any local artinian $\AN{W}$-scheme $S$, the functor
  \[
  p_*\mid_{\an{S}} : \Hom_{\AN{W}}(\an{S},\AN{U}) \to
  \Hom_{\AN{W}}(\an{S},\AN{V}) 
  \]
  is fully faithful. Then $p$ is a monomorphism of analytic stacks,
  and is, in particular, representable by analytic spaces. If, in
  addition, the functor $p_*\mid_{\an{S}}$ is essentially surjective,
  then $p$ is an analytic isomorphism. 
\end{lem}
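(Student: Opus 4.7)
The plan is to reduce both assertions to showing certain natural $T$-sheaves are isomorphic to $T$. For the first assertion, fix an analytic $\AN{W}$-space $T$, objects $a, b \in \AN{U}(T)$, and a 2-isomorphism $\phi : pa \Rightarrow pb$ over $\AN{W}$; I seek a unique 2-isomorphism $\psi : a \Rightarrow b$ with $p\psi = \phi$, which is precisely what $p$ being a monomorphism amounts to. Consider the $T$-sheaf $I_\phi$ sending $T' \to T$ to the set of 2-isomorphisms $\psi' : a|_{T'} \Rightarrow b|_{T'}$ with $p\psi' = \phi|_{T'}$. Since $\AN{U}$ and $\AN{V}$ are analytic stacks with representable diagonals, $I_\phi$ is representable by an analytic $T$-space, fitting into the pullback
\[
I_\phi \cong T \times_{\Isom_T(pa, pb),\, \phi} \Isom_T(a, b).
\]
The problem thus reduces to showing $I_\phi \to T$ is an analytic isomorphism.

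To apply the hypothesis, for each point $t \in T$ and integer $n \geq 1$, set $S_n := \spec(\Orb_{T,t}/\mathfrak{m}_t^n)$; this is an artinian $\C$-scheme, and the canonical morphism $\an{S_n} \to T$ realizes $S_n$ as an artinian $\AN{W}$-scheme. Full faithfulness of $p_*|_{\an{S_n}}$ produces a unique section of $I_\phi \to T$ over $\an{S_n}$. Varying $t$ and $n$, these sections force $I_\phi \to T$ to be bijective on underlying sets (from $n = 1$) and to induce isomorphisms on completions of analytic local rings at each point (in the limit $n \to \infty$). Uniqueness of the sections on each $\an{S_n}$, combined with the injectivity of a Noetherian analytic local ring into its completion, shows further that $I_\phi \to T$ is a monomorphism of analytic spaces: two sections would agree on every formal neighborhood of every point and hence coincide. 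A monomorphism of analytic spaces which is bijective on points and induces isomorphisms on completions of local rings is an isomorphism, so $I_\phi \cong T$, yielding the unique $\psi$. Hence $p$ is a monomorphism, and in particular representable by analytic spaces.

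For the second assertion, assume additionally that $p_*|_{\an{S}}$ is essentially surjective for every artinian $\AN{W}$-scheme $S$. Given $b : T \to \AN{V}$, let $L := T \times_{\AN{V},\, b} \AN{U}$. By the first part, $L \to T$ is a monomorphism representable by analytic spaces, so $L$ is an analytic $T$-space. Essential surjectivity provides a section of $L \to T$ over every $\an{S_n}$, and the monomorphism property of $p$ ensures uniqueness. Repeating the analytic argument above yields $L \cong T$, hence a global lift $a : T \to \AN{U}$ with $pa \cong b$, so $p$ is an analytic equivalence. The main obstacle is the concluding analytic implication, that a monomorphism of analytic spaces that is bijective on points and induces isomorphisms on completions of local rings must be an isomorphism; verifying this requires $I_\phi \to T$ and $L \to T$ to be locally of finite type, which should follow from their construction as base changes of representable diagonal morphisms of analytic stacks, combined with faithful flatness of completion for Noetherian analytic local rings.
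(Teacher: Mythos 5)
Your proof is correct and follows essentially the same route as the paper: you reduce the full-faithfulness claim to showing that a representable morphism (your $I_\phi$, which is a base change of $\Delta_p$) admitting unique lifts over all local artinian points is an isomorphism, and then handle essential surjectivity by base change to $L=T\times_{\AN{V}}\AN{U}$. The only difference is that where the paper invokes the infinitesimal lifting criterion for \'etale morphisms of analytic spaces to conclude that such a representable morphism is bijective \'etale, hence an isomorphism, you unwind that criterion by hand via completed local rings; the flatness-plus-unramifiedness argument you sketch at the end is exactly the content of that criterion.
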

\begin{proof}
  First, we assume that the $p$ is representable by analytic spaces
  and $p_*\mid_{\an{S}}$ is an equivalence. The lifting criteria for
  analytic \'etale morphisms shows that $p$ is a bijective \'etale
  morphism, hence an analytic isomorphism. In the case where the map
  $p$ is not assumed to be representable by analytic spaces, and
  $p_*\mid_{\an{S}}$ is fully faithful, we observe that the diagonal
  map $\Delta_p$ is representable by analytic spaces and
  $(\Delta_p)_*\mid_{\an{S}}$ is an equivalence. By what we have shown
  already, the diagonal map $\Delta_p$ is an analytic isomorphism, and
  so the map $p$ is a monomorphism of analytic stacks. Noting that a
  monomorphism of analytic stacks is representable by analytic spaces,
  then we're done.  
\end{proof}
To prove Theorem \ref{thm:main_GAGA}, the following
strengthening of Lemma \ref{lem:useful2} will be useful.
\begin{cor}\label{cor:conrad_crit}
  Fix an analytic stack $\AN{Y}$. Consider a morphism of \'etale
  $\AN{Y}$-stacks  $P : \mathscr{F} \to  
  \mathscr{G}$. Suppose the following conditions are
  satisfied:
  \begin{enumerate}
  \item\label{item:analF} $\mathscr{F}$ is an analytic stack;
  \item\label{item:diagG} the diagonal of $\mathscr{G}$ is
    representable by analytic spaces; 
  \item\label{item:bijART} for any local artinian $\AN{Y}$-scheme $S$,
    the functor 
    \[
    \mathscr{F}(\an{S}) \to \mathscr{G}({\an{S}})
    \]
    is fully faithful (resp. an equivalence).
  \end{enumerate}
  Then $P$ is a monomorphism (resp. an equivalence).
\end{cor}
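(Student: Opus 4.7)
The strategy is to reduce both assertions to Lemma~\ref{lem:useful2} via the diagonal trick, handling the monomorphism claim first and then bootstrapping to the equivalence claim by base change.

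For the monomorphism claim, the first move is to observe that the fiber product $\mathscr{F} \times_{\mathscr{G}} \mathscr{F}$ is an analytic $\AN{Y}$-stack: its projection to $\mathscr{F} \times_{\AN{Y}} \mathscr{F}$ is a base change of $\Delta_{\mathscr{G}/\AN{Y}}$, so by~\itemref{item:diagG} it is representable by analytic spaces, and combined with~\itemref{item:analF} this identifies $\mathscr{F} \times_{\mathscr{G}} \mathscr{F}$ as an analytic stack. Consequently $\Delta_P$ is a morphism of analytic $\AN{Y}$-stacks and Lemma~\ref{lem:useful2} becomes applicable. For any local artinian $\AN{Y}$-scheme $S$, I would unwind $\Hom_{\AN{Y}}(\an{S}, \mathscr{F} \times_{\mathscr{G}} \mathscr{F})$ as triples $(a, b, \alpha)$ with $a, b \in \mathscr{F}(\an{S})$ and $\alpha : P a \cong P b$; the full faithfulness in~\itemref{item:bijART} promotes $\alpha$ to a unique $\beta : a \cong b$ in $\mathscr{F}(\an{S})$, and the pair $(\mathrm{id}_a, \beta)$ identifies the triple with $\Delta_P(a) = (a, a, \mathrm{id})$. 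A parallel check on morphisms shows $(\Delta_P)_*|_{\an{S}}$ is an equivalence, so Lemma~\ref{lem:useful2} yields that $\Delta_P$ is an analytic isomorphism, i.e.\ that $P$ is a monomorphism representable by analytic spaces.

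For the equivalence case, knowing $P$ is already a monomorphism reduces the problem to showing that every morphism $\AN{T} \to \mathscr{G}$ from an analytic $\AN{Y}$-space factors through $P$; equivalently, the base change $P_{\AN{T}} : \mathscr{F} \times_{\mathscr{G}} \AN{T} \to \AN{T}$, a monomorphism of analytic spaces, must be an isomorphism. For each local artinian $\AN{Y}$-scheme $S$ and morphism $\an{S} \to \AN{T}$, the essential surjectivity part of~\itemref{item:bijART} lifts the composite $\an{S} \to \mathscr{G}$ to $\mathscr{F}$, producing an $\an{S}$-point of $\mathscr{F} \times_{\mathscr{G}} \AN{T}$ projecting to the given $\an{S} \to \AN{T}$. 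Thus $(P_{\AN{T}})_*|_{\an{S}}$ is essentially surjective (and automatically fully faithful, since $P_{\AN{T}}$ is monomorphic), and a second application of Lemma~\ref{lem:useful2} gives that $P_{\AN{T}}$ is an analytic isomorphism for every test $\AN{T}$, so $P$ itself is an equivalence.

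The main subtlety is the opening observation that $\mathscr{F} \times_{\mathscr{G}} \mathscr{F}$ is an analytic stack despite $\mathscr{G}$ being merely an \'etale stack; hypothesis~\itemref{item:diagG} is precisely what makes this work and thereby unlocks Lemma~\ref{lem:useful2}. After this, the rest of the argument is routine $2$-categorical bookkeeping with the universal properties of the fiber products in play.
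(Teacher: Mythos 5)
Your argument is correct and follows essentially the same route as the paper: both use conditions (1) and (2) to see that the relevant fiber products over $\mathscr{G}$ are analytic stacks, transfer condition (3) to those fiber products, and conclude by Lemma \ref{lem:useful2}. The only cosmetic difference is that you treat the monomorphism claim via the diagonal $\mathscr{F}\to\mathscr{F}\times_{\mathscr{G}}\mathscr{F}$ explicitly, whereas the paper reduces at once to the case where $\mathscr{G}$ is an analytic space and lets Lemma \ref{lem:useful2} absorb that step internally.
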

\begin{proof}
  Conditions \itemref{item:analF} and \itemref{item:diagG} imply that
  the morphism $P$ is representable by analytic stacks. Thus we may
  assume that $\mathscr{G}$ is an analytic space, and $\mathscr{F}$ is
  an analytic stack. Condition \itemref{item:bijART} implies Lemma
  \ref{lem:useful2} applies, and we're done.
\end{proof}
We defer the proof Theorem \ref{thm:GAGA_stacks} to
\S\ref{sec:nsGAGA}, and focus on applying this result to the proof of
Theorem \ref{thm:main_GAGA}.  By Lemma \ref{lem:EXTRAUSEFUL1} and
Corollary \ref{cor:conrad_crit}, to prove Theorem \ref{thm:main_GAGA},
the next result will be of use.   
\begin{lem}\label{lem:artinian_GAGAstacks}
  Fix a locally of finite type algebraic $\C$-space $Y$, and a locally
  of finite type algebraic $Y$-space
  $X$. For any  local artinian $Y$-scheme $S$, the functor
  \[
  \Phi_{X/Y}({\an{S}}) : \HS{X/Y}({S}) \to
 \HSAN{\an{X}/\an{Y}}(\an{S}) 
  \]
  is an equivalence.
\end{lem}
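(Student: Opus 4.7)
The plan is as follows. Set $W := X \times_Y S$, a locally of finite type algebraic $\C$-space. As an algebraic space, $W$ is a Deligne-Mumford $\C$-stack with quasi-compact and separated diagonal, so Theorem \ref{thm:GAGA_stacks} is available for $W$; note also that analytification commutes with fiber products, so $\an{W} \cong \an{X} \times_{\an{Y}} \an{S}$.

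First I will show that the objects parameterized by $\HS{X/Y}(S)$ and by $\HSAN{\an{X}/\an{Y}}(\an{S})$ amount, in both settings, to a \emph{finite} morphism $s : Z \to W$ (resp.\ $\sigma : \AN{Z} \to \an{W}$) with $Z$ (resp.\ $\AN{Z}$) proper and flat over $S$ (resp.\ $\an{S}$). In the algebraic case, the finite diagonal of $Z/S$ factors as $\Delta_{Z/S} = \iota \circ \Delta_{Z/W}$, where $\iota : Z \times_W Z \hookrightarrow Z \times_S Z$ is a monomorphism; by cancellation $\Delta_{Z/W}$ is itself finite, so $s$ is separated. The graph factorization $Z \xrightarrow{\Gamma_s} Z \times_S W \to W$ then writes $s$ as a closed immersion composed with the base change of the proper morphism $Z \to S$, hence $s$ is proper, and together with quasi-finiteness it is finite. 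The same argument applies verbatim on the analytic side and additionally shows that $\AN{Z}$ is itself an analytic space.

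The core step is to invoke Theorem \ref{thm:GAGA_stacks} for $W$. Every finite $s : Z \to W$ with $Z$ proper produces a pseudosheaf $(s, \Orb_Z) \in \pscSHV{W}$, which analytifies to $(\sigma, \Orb_{\AN{Z}}) \in \pscSHV{\an{W}}$. For essential surjectivity of $\Phi_{X/Y}(\an{S})$, given a finite $\sigma : \AN{Z} \to \an{W}$ with $\AN{Z}$ proper and flat over $\an{S}$, Theorem \ref{thm:GAGA_stacks} provides an algebraic pseudosheaf $(Z \to W, F)$ algebraizing $(\AN{Z}, \Orb_{\AN{Z}})$; classical GAGA on the proper $\C$-space $Z$ (Theorem \ref{thm:clGAGA}) then promotes the isomorphism $\an{F} \cong \Orb_{\an{Z}}$ to $F \cong \Orb_Z$. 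Flatness of $Z \to S$ amounts to $\Orb_{S,s}$-flatness of the stalks of $s_*\Orb_Z$, and is reflected from analytic flatness via the faithful flatness of the stalk maps $\Orb_{W,w} \to \Orb_{\an{W},w}$. For full faithfulness of $\Phi_{X/Y}(\an{S})$, I appeal to Lemma \ref{lem:ff_GAGA}: a $W$-isomorphism $Z_1 \cong Z_2$ corresponds to a pseudosheaf isomorphism $(f, f^\sharp)$, and any analytic $\an{W}$-isomorphism algebraizes uniquely.

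The main obstacle I anticipate lies in reconciling the extra data carried by a pseudosheaf morphism---a coherent module map $\phi$ not uniquely determined by its underlying $W$-morphism $t$---with the bare $W$-isomorphism data of the Hilbert stack groupoid. I plan to resolve this by restricting to pseudosheaf morphisms $(t, \phi)$ for which $\phi$ is a morphism of $\Orb_W$-algebras (equivalently, $\phi = t^\sharp$); since this condition is preserved and reflected by analytification, the pseudosheaf equivalence of Theorem \ref{thm:GAGA_stacks} restricts to the equivalence of groupoids of finite $W$-spaces that we require.
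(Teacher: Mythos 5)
Your overall architecture is the same as the paper's: regard $W := X\times_Y S$ as a locally of finite type algebraic $\C$-space, invoke Theorem \ref{thm:GAGA_stacks} (via $\pscSHV{W}$ with the structure sheaf as the coherent module) to algebraize the underlying locally quasi-finite separated morphism, and then use classical separated GAGA to recover the $S$-structure and reflect flatness of $Z\to S$ through the faithfully flat stalk maps. Your deduction that $\Delta_{Z/W}$ is finite by cancellation from the finite diagonal of $Z/S$, hence that $s$ is separated, is correct and is a point the paper leaves implicit; likewise your care about matching pseudosheaf morphisms $(t,\phi)$ with $\phi=t^\sharp$ against the bare groupoid of the Hilbert stack is a legitimate detail, resolved exactly as you suggest by faithfulness of analytification of coherent sheaf maps on the proper space $Z$.

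However, your first reduction is false: $s : Z \to W$ is \emph{not} finite in general. The graph factorization $Z \xrightarrow{\Gamma_s} Z\times_S W \to W$ exhibits $\Gamma_s$ as a pullback of $\Delta_{W/S}$, which is a closed immersion only when $W \to S$ is separated. Here $W = X\times_Y S$ with $X \to Y$ merely locally of finite type, and the non-separatedness of $W/S$ is the entire point of the paper: for $X$ the projective line with a doubled point over $Y=S=\spec\C$, the inclusion of one copy of $\Pr^1_\C$ is quasi-finite, separated, with proper source, but is an open immersion that is not even a closed map. So $\Gamma_s$ is only a locally closed immersion and the usual cancellation argument for properness of $s$ breaks down. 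This matters for how you phrase essential surjectivity: as written you only treat ``finite $\sigma$,'' which omits precisely the objects that force the use of Theorem \ref{thm:GAGA_stacks} rather than Corollary \ref{cor:clGAGA1}. The gap is repairable with no new ideas---the pseudosheaf categories are built from quasi-finite (resp.\ locally quasi-finite) \emph{separated} morphisms, so every step you perform after the faulty reduction goes through verbatim for such $\sigma$, which is exactly what the paper does---but the finiteness claim should be deleted, not patched.
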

\begin{proof}
  First, we show that $\Phi_{X/S}(\an{S})$ is fully faithful. Given
  quasi-finite and separated maps $Z$, $Z' \rightrightarrows X\times_Y
  S$, where the two compositions $Z$, $Z' \rightrightarrows S$ are
  proper and flat, then regarding $X\times_Y S$
  as a locally of finite type $\C$-space, as well as $Z$ and $Z'$ as
  proper $\C$-spaces, we conclude by the full faithfulness of
  $\Psi_{X\times_Y S}$ that any analytic $\an{X}\times_{\an{Y}}
  \an{S}$-map $\an{Z} \to \an{Z}'$ arises from an algebraic $X\times_Y
  S$-map $Z \to Z'$ over $\C$. Applying GAGA (cf. Corollary
  \ref{cor:clGAGA2}), we conclude that $\C$-maps $Z \to Z'$ 
  which analytify to $\an{S}$-maps $\an{Z} \to \an{Z}'$ are
  automatically $S$-maps---hence we've shown that $\Phi_{X/Y}(\an{S})$
  is fully faithful. 

  To show that the functor $\Phi_{X/Y}(\an{S})$ is essentially
  surjective, we note that given a locally quasi-finite and separated
  morphism of analytic spaces $\AN{Z} \to \an{X}\times_{\an{Y}}
  \an{S}$ such that the composition $\AN{Z} \to \an{S}$ is proper and 
  flat, then again regarding $Z$ and $X\times_Y
  S$ as living over $\C$, then Theorem \ref{thm:GAGA_stacks} implies
  that the morphism $\AN{Z} \to \an{X}\times_{\an{Y}} \an{S}$
  algebraizes to a quasi-finite and separated morphism of algebraic
  $\C$-spaces $Z \to X\times_Y S$ such that $Z$ is
  $\C$-proper. Applying GAGA (cf. Corollary \ref{cor:clGAGA2}) again, we
  may algebraize the flat morphism $\an{Z} \to \an{S}$ to a flat
  morphism $Z \to S$, which proves essential surjectivity of the
  functor $\Phi_{X/S}(\an{S})$. 
\end{proof}
\begin{proof}[Proof of Theorem \ref{thm:main_GAGA}]
  We use the criteria of Corollary \ref{cor:conrad_crit}. For
  \itemref{item:analF}, by \cite[Thm.\
  2]{JACKRYDHHILBSTKSCH-2010}, we have that $\an{(\HS{X/Y})}$ is an
  analytic stack. For \itemref{item:diagG}, by Corollary
  \ref{cor:rep_diag_anal} we deduce that $\HSAN{\an{X}/\an{Y}}$ has
  representable diagonal. For \itemref{item:bijART}, we apply Lemma
  \ref{lem:artinian_GAGAstacks}.
\end{proof}
\section{Non-separated GAGA}\label{sec:nsGAGA}  
To prove Theorem \ref{thm:GAGA_stacks}, we will utilize two methods
of d\'evissage, which are summarized by the next two propositions.
\begin{prop}[Birational d\'evissage]\label{prop:surj_cpoints_dev_case} 
    Suppose that $X$ is a finite type $\C$-scheme. Suppose that 
    for any closed immersion $V \hookrightarrow X$, there is a proper,
    schematic, and birational morphism $V' \to V$ such that the
    functor $\Psi_{V'}$ is an equivalence, then the functor $\Psi_X$
    is an equivalence.
\end{prop}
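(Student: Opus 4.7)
The plan is to proceed by Noetherian induction on closed subschemes of $X$. By Lemma \ref{lem:ff_GAGA}, $\Psi_X$ is already fully faithful, so only essential surjectivity remains. Since the hypothesis of the proposition is inherited by every closed subscheme of $X$, I may assume as an inductive hypothesis that $\Psi_W$ is an equivalence for every proper closed subscheme $W \subsetneq X$. Applying the hypothesis with $V = X$ produces a proper schematic birational morphism $\pi : X' \to X$ with $\Psi_{X'}$ an equivalence; let $U \subset X$ be a dense open over which $\pi$ is an isomorphism, and set $V_0 := X \setminus U$ with its reduced induced structure, so that $\Psi_{V_0}$ is an equivalence by the inductive hypothesis.

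Given an analytic pseudosheaf $\xi = (\sigma : \AN{Z} \to \an{X}, \AN{F}) \in \pscSHV{\an{X}}$, my plan is to produce two compatible algebraic lifts and glue them. Pulling back along $\an{\pi}$ yields an analytic pseudosheaf $\xi' = (\AN{Z}', \AN{F}')$ on $\an{X'}$, where $\AN{Z}' := \AN{Z} \times_{\an{X}} \an{X'}$ is proper by base change along the proper map $\an{\pi}$; by hypothesis, $\xi'$ algebraizes uniquely to some $(Z' \to X', F')$. Pulling back along the closed immersion $\an{V_0} \hookrightarrow \an{X}$ yields an analytic pseudosheaf on $\an{V_0}$, which by the inductive hypothesis algebraizes to $(Z_{V_0} \to V_0, F_{V_0})$.

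The restrictions of $(Z', F')$ and $(Z_{V_0}, F_{V_0})$ to $\pi^{-1}(V_0)$ coincide analytically, since both recover the pullback of $\xi$ to $\an{\pi^{-1}(V_0)}$, and by full faithfulness of $\Psi_{\pi^{-1}(V_0)}$ (Lemma \ref{lem:ff_GAGA}) this identification is algebraic and canonical. I would then assemble these into an algebraic pseudosheaf $(Z \to X, F)$ on $X$ via effective descent along the abstract blow-up square formed by $\pi : X' \to X$ and $V_0 \hookrightarrow X$: the source $Z$ is obtained by formally gluing $Z_{V_0}$ and $Z'$ along the identification on $\pi^{-1}(V_0)$, and $F$ is constructed analogously. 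Because the analytification procedure commutes with such gluings, the resulting $(Z, F)$ will analytify back to $\xi$, delivering essential surjectivity.

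The main obstacle is justifying the effective descent in the last step, namely producing the algebraic pseudosheaf $(Z \to X, F)$ from the compatible data over $X'$ and $V_0$ and verifying that its analytification is isomorphic to $\xi$. I expect this to reduce to a combination of formal GAGA along the completion of $X$ at $V_0$ (Grothendieck's existence theorem) and a Ferrand-Artin style pushout in the category of algebraic spaces, supplemented by a final application of Lemma \ref{lem:ff_GAGA} to identify the resulting analytification canonically with $\xi$.
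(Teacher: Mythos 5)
Your setup (noetherian induction, full faithfulness from Lemma \ref{lem:ff_GAGA}, pulling $\AN{Z}$ back to $\an{X}'$ and algebraizing there) matches the paper's opening moves, but the gluing step you defer is exactly where the real content lies, and the mechanism you propose for it does not work as stated. An abstract blow-up square is cartesian but not cocartesian, so there is no effective descent of quasi-finite covers or coherent sheaves along $\{X' \to X,\ V_0 \hookrightarrow X\}$ when $V_0$ carries its \emph{reduced} structure. A concrete failure: let $\pi : V' \to V$ be the normalization of a cuspidal curve (finite, schematic, birational, hence proper) and $V_0$ the reduced cusp; the pushout of $V_0 \leftarrow \pi^{-1}(V_0)_{\mathrm{red}} \to V'$ is $\mathbb{A}^1_{\C}$, not $V$, so data over $V'$ and over the reduced $V_0$ cannot reconstruct the structure sheaf downstairs. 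For the same reason your candidate $(Z,F)$, built by "formally gluing $Z_{V_0}$ and $Z'$," would not analytify back to $\xi$ in general. The fixes you gesture at (formal GAGA along the completion at $V_0$, Ferrand pushouts) point in the right direction but are not carried out, and formal GAGA is not directly available here since $X$ is neither proper nor complete.

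The paper's route avoids this by never gluing over the reduced locus. Given $\sigma : \AN{Z} \to \an{X}$, it replaces the proper (but not finite) map $\AN{Z}_{\an{X}'} \to \AN{Z}$ by its Stein factorization, producing a \emph{finite} surjection $f : \AN{Z}' \to \AN{Z}$ with $\AN{Z}'$ algebraizable (Lemma \ref{lem:stein_analytic}) and with $\ker f^\sharp$, $\coker f^\sharp$ killed by a power of the pulled-back ideal $(\an{I})_{\AN{Z}}$ (Lemma \ref{lem:analy_noeth_ind}). It then reconstructs $\AN{Z}$ as a genuine pushout along the \emph{conductor} square (Lemma \ref{lem:conductor_anal}), whose closed pieces are the possibly non-reduced thickenings $V((\an{I})^k_{\AN{Z}})$ --- algebraizable by the inductive hypothesis precisely because that hypothesis quantifies over all closed subschemes with given support, not just the reduced one --- and algebraizes the pushout via Lemma \ref{lem:pushouts_analy} and Lemma \ref{lem:inj_surj_analy}. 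If you want to complete your argument you must either reproduce this conductor-square mechanism or genuinely establish a formal-gluing statement for the blow-up square involving all infinitesimal neighborhoods of $V_0$; as written, your proof has a gap at its central step.
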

\begin{prop}[Generically finite \'etale d\'evissage]\label{prop:fet_dev}
  Suppose that $X$ is a finite type Deligne-Mumford $\C$-stack with
  quasi-compact and separated diagonal. Suppose that for any closed
  immersion $V \hookrightarrow X$, there is a finite and generically
  \'etale map $V' \to V$ such that the functors $\Psi_{V'}$ and
  $\Psi_{V'\times_V V'}$ are equivalences, then the functor $\Psi_X$
  is an equivalence.  
\end{prop}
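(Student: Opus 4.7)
By Lemma~\ref{lem:ff_GAGA} the functor $\Psi_X$ is fully faithful, so the essential task is to establish essential surjectivity. The hypothesis of the proposition is inherited by closed substacks of $X$ (a closed immersion into any closed substack $V \subseteq X$ composes to a closed immersion into $X$), so I would proceed by Noetherian induction on closed substacks of $X$: assume $\Psi_V$ is an equivalence for every proper closed substack $V \subsetneq X$, the empty case being trivial.

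Fix an analytic pseudosheaf $(\AN{Z} \to \an{X}, \AN{F})$. Apply the hypothesis with $V = X$ to obtain a finite, generically \'etale cover $V' \to X$ with both $\Psi_{V'}$ and $\Psi_{V' \times_X V'}$ equivalences. Form the analytic pullback
\[
(\AN{Z}', \AN{F}') := \bigl(\AN{Z} \times_{\an{X}} \an{V'},\ \text{pullback of } \AN{F}\bigr),
\]
which is a pseudosheaf on $\an{V'}$: finiteness of $\an{V'} \to \an{X}$ preserves the properness of $\AN{Z}$. By the equivalence $\Psi_{V'}$, this algebraizes to a pseudosheaf $(Z', F') \in \pscSHV{V'}$.

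Next, the canonical identification $p_1^*(\AN{Z}', \AN{F}') \cong p_2^*(\AN{Z}', \AN{F}')$ on $\an{(V' \times_X V')}$, arising because $(\AN{Z}', \AN{F}')$ is the analytic pullback of an object defined on $\an{X}$, algebraizes via full faithfulness of $\Psi_{V' \times_X V'}$ to a descent datum $\tau \colon p_1^*(Z', F') \cong p_2^*(Z', F')$. Its cocycle identity on the triple fibre product $V' \times_X V' \times_X V'$ holds after analytification and hence holds algebraically by Lemma~\ref{lem:ff_GAGA} applied to that triple, whose diagonal inherits quasi-compactness and separatedness from that of $X$.

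The remaining, and principal, step is effective descent of $(Z', F')$ with its descent datum $\tau$ along the finite, generically \'etale, but generally non-flat morphism $V' \to X$. My plan is to split this into two pieces: over the dense open $U \subseteq X$ where $V' \to X$ is \'etale, classical \'etale descent yields a pseudosheaf $(Z_U, F_U) \in \pscSHV{U}$; over the proper closed complement $C := X \setminus U$, the Noetherian inductive hypothesis algebraizes the pullback of $(\AN{Z}, \AN{F})$ to $\an{C}$ as $(Z_C, F_C) \in \pscSHV{C}$. Gluing these two pieces along the compatibility of their analytifications in a neighbourhood of $\an{C}$, with uniqueness of the gluing morphism supplied by Lemma~\ref{lem:ff_GAGA}, would produce the required $(Z, F) \in \pscSHV{X}$ whose analytification recovers $(\AN{Z}, \AN{F})$. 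This gluing across the open-closed decomposition, handling the failure of flatness of $V' \to X$ along $C$, is the technical heart of the argument and the main obstacle.
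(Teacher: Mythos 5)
Your setup is right and matches the paper's: Noetherian induction on closed substacks, pulling $(\AN{Z},\AN{F})$ back to the cover $\an{V'}$ and to $\an{(V'\times_X V')}$, and algebraizing those pullbacks together with the (co)descent data via $\Psi_{V'}$ and $\Psi_{V'\times_X V'}$. But the step you flag as "the technical heart" is a genuine gap, and the open--closed gluing you propose does not work as stated. There is no descent along the decomposition of $X$ into the dense open $U$ and its closed complement $C$: an object $(Z_U,F_U)$ over $U$ (which, note, is no longer proper over $\C$) and an object $(Z_C,F_C)$ over $C$ cannot be glued to an object over $X$ by matching "in a neighbourhood of $\an{C}$" --- $\{U,C\}$ is not a cover for any topology in which pseudosheaves satisfy descent, and $V'\to X$ is not flat, so effective fppf descent along it is unavailable. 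Making this rigorous requires all infinitesimal neighbourhoods of $C$ and a pinching (conductor-square) argument, and that machinery is precisely what is missing from your sketch.

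What the paper does instead is avoid descent entirely. The diagram $[\AN{Z}^2\rightrightarrows\AN{Z}^1]$ of algebraizable quasi-finite separated objects admits a coequalizer $\AN{W}$ in the category of locally quasi-finite and separated analytic $\an{X}$-stacks, and this coequalizer is itself algebraizable because the corresponding algebraic colimit exists and commutes with analytification (Lemma~\ref{lem:pushouts_analy}). The induced finite surjection $\eta:\AN{W}\to\AN{Z}$ is an isomorphism over $\sigma^{-1}(\an{U})$ but not globally; Lemma~\ref{lem:fet_dev_pushout} shows $\ker\eta^\sharp$ and $\coker\eta^\sharp$ are killed by a power of $(\an{\mathscr{I}})_{\AN{Z}}$, where $V(\mathscr{I})=X\setminus U$. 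One then recovers $\AN{Z}$ from the algebraizable $\AN{W}$ by the conductor-square pushout of Lemmas~\ref{lem:conductor_anal} and \ref{lem:inj_surj_analy}\itemref{lem:inj_surj_analy3}, the Noetherian inductive hypothesis supplying algebraizability of the closed substacks $V(\mathscr{I}^k_{\AN{Z}})$ that appear in the pinching. Finally $\AN{F}$ is algebraized by separated GAGA (Theorem~\ref{thm:clGAGA}) once $\AN{Z}$ is known to be algebraic. Without this coequalizer-plus-conductor mechanism (or an equivalent substitute), your argument does not close.
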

It will also be necessary to have a result on sections to \'etale morphisms.
\begin{prop}\label{prop:etGAGA}
  Consider a quasi-compact and \'etale map of locally
  of finite type algebraic $\C$-spaces $W \to Z$. Then, the map of \emph{sets}
  \[
  \Hom_Z(Z,W) \to \Hom_{\an{Z}}(\an{Z},\an{W})
  \]
  is bijective.
\end{prop}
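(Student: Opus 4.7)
The proof has two parts: injectivity and surjectivity.

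For \emph{injectivity}, suppose $s_1, s_2 : Z \to W$ are algebraic $Z$-morphisms with $\an{s_1} = \an{s_2}$. Form the equalizer $E := Z \times_{W \times_Z W} W$, where $Z \to W \times_Z W$ is $(s_1, s_2)$ and $W \to W \times_Z W$ is the diagonal $\Delta_{W/Z}$. Because $W \to Z$ is \'etale, $\Delta_{W/Z}$ is an open immersion, so $E \hookrightarrow Z$ is an open immersion. Its analytification is the analytic equalizer of $\an{s_1}, \an{s_2}$, which equals $\an{Z}$; hence $E \hookrightarrow Z$ is surjective on $\C$-points. Since locally of finite type algebraic $\C$-spaces are Jacobson, a non-empty closed subspace has a $\C$-point, forcing $E = Z$ and $s_1 = s_2$.

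For \emph{surjectivity}, let $\sigma : \an{Z} \to \an{W}$ be given; I aim to produce an algebraic $s : Z \to W$ with $\an{s} = \sigma$. Since $W$ represents $\Hom_Z(-, W)$ as an \'etale sheaf on $Z$, and injectivity applied on overlaps supplies compatibility, it suffices to algebraize $\sigma$ in an \'etale neighborhood of each $\C$-point $z \in Z$. Write $\pi : W \to Z$ and $w := \sigma(z) \in W(\C)$. The key input is the structure theorem for quasi-finite \'etale morphisms applied at the strict Henselization $Z^{sh}_z$: the pullback $W \times_Z Z^{sh}_z$ decomposes as a disjoint union of one copy of $Z^{sh}_z$ for each point of $\pi^{-1}(z)$, plus a residual piece with empty fiber over the closed point. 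Descending via a standard limit argument yields a connected \'etale neighborhood $\phi : U \to Z$ of $z$, a lift $\tilde z \in U(\C)$, and a clopen decomposition
\[
W \times_Z U \;=\; V_1 \sqcup \cdots \sqcup V_n \sqcup W'
\]
in which each $V_i \to U$ is an isomorphism (with $V_i$ containing the lift $\tilde w_i$ of $w_i \in \pi^{-1}(z)$) and $W' \times_U \{\tilde z\} = \emptyset$; after relabeling, $w = w_1$.

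Analytification preserves the clopen decomposition, so $\an{W \times_Z U} = \an{V_1} \sqcup \cdots \sqcup \an{V_n} \sqcup \an{W'}$ is an analytic clopen decomposition. The pulled-back section $\sigma_U := \sigma \times_{\an{Z}} \an{U}$ is continuous, $\an{U}$ is connected, and $\sigma_U(\tilde z) = \tilde w_1 \in \an{V_1}$; hence $\sigma_U(\an{U}) \subset \an{V_1}$. Since $\an{V_1} \to \an{U}$ is an analytic isomorphism, $\sigma_U$ must coincide with the analytification of the algebraic $Z$-morphism $s_U : U \xrightarrow{\cong} V_1 \hookrightarrow W \times_Z U \to W$. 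These \'etale-local sections glue by the sheaf property of $W$ to a global $s : Z \to W$ with $\an{s} = \sigma$. The chief technical input is the structure theorem furnishing the clopen decomposition; I emphasize that although one typically cannot algebraically shrink $U$ so as to eliminate the residual piece $W'$, this is immaterial---$W'$ sits in a separate clopen component, so the connectedness of $\an{U}$ together with the location of $\sigma_U(\tilde z)$ rules it out automatically.
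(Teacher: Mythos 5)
Your injectivity argument is fine: since $W \to Z$ is \'etale, $\Delta_{W/Z}$ is an open immersion even without separatedness, so the equalizer of two sections is an open subspace of $Z$, and comparing $\C$-points via analytification forces it to be all of $Z$. This is a reasonable elementary substitute for what the paper gets for free from its sheaf-theoretic comparison.

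The surjectivity argument has a genuine gap, and it occurs at exactly the point the proposition is designed to address: $W \to Z$ is \emph{not} assumed separated (the paper's proof opens by stressing that this is the interesting case). The structure theorem you invoke --- that $W \times_Z Z^{sh}_z$ is a disjoint union of copies of $Z^{sh}_z$ indexed by $\pi^{-1}(z)$ together with a residual clopen piece with empty closed fiber --- is a theorem about \emph{separated} quasi-finite morphisms, and it is false without separatedness. Take $Z = \Aff^1_\C$ and $W$ the affine line with doubled origin, mapping to $Z$ by the identity on each copy; this is quasi-compact, \'etale, and non-separated. Over the strict Henselization at $0$ the pullback is the strictly local scheme with doubled closed point: the two sections through the two origins are open immersions whose images overlap along the complement of the closed point, so they are neither disjoint nor closed, and no clopen decomposition of the asserted form exists. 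Consequently the step ``$\an{U}$ is connected and $\sigma_U(\tilde z)\in \an{V_1}$, hence $\sigma_U(\an{U})\subset \an{V_1}$'' collapses: $\an{V_1}$ is open but not closed in $\an{W\times_Z U}$, so continuity plus connectedness gives nothing. The caveat you append about the residual piece $W'$ addresses a different, harmless issue; the real obstruction is that the $V_i$ themselves need not be clopen. The paper circumvents this with a global argument: the sheaf $H_{W/Z}=\Hom_Z(-,W)$ on the small \'etale site of $Z$ is constructible because $W\to Z$ is quasi-compact and \'etale, a constructible sheaf of sets embeds into a constructible sheaf of abelian groups, and Artin's comparison theorem between \'etale and complex cohomology of constructible sheaves yields the bijection on global sections. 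As written, your local argument proves the proposition only under the additional hypothesis that $W\to Z$ is separated.
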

The proofs of Propositions \ref{prop:surj_cpoints_dev_case} and
\ref{prop:fet_dev} will fill the remainder of this section, and we
will utilize arguments very similar to those in
\cite[\S4]{JACKRYDHHILBSTKSCH-2010}.  With
the above results at our disposal, however, we can prove Theorem
\ref{thm:GAGA_stacks_VAR1} immediately.   
\begin{proof}[Proof of Theorem \ref{thm:GAGA_stacks_VAR1}]
  By Lemma \ref{lem:ff_GAGA}, we know that the functor $\Psi_X$ is
  fully faithful, so it remains to show that the functor
  $\widetilde{\Psi}_X$ is essentially surjective.
 {\par\noindent}\textbf{Basic Case.} First, we suppose that $X$ is a
 finite type $\C$-scheme, and the structure morphism factors as the
 composition $X \xrightarrow{f} Y \xrightarrow{g} \spec \C$, where the
 morphism $f$ is quasi-compact \'etale and the morphism $g$ is
 projective. Fix a locally quasi-finite morphism of analytic spaces   
  $s:\AN{Z} \to \an{X}$ such that $\AN{Z}$ is proper. 
  Since the analytic space $\an{Y}$ is separated, one
  concludes that the composition $f\circ s : \AN{Z} \to \an{Y}$ is a
  locally quasi-finite and proper morphism of analytic spaces. By \cite[Thm.\
  XII.4.2]{MR755331} such a morphism is a finite. Since $Y$ is also 
  projective, by \cite[Exp. XII, 
  Thm. 4.6]{SGA1}, we conclude that there is a finite morphism of
  $\C$-schemes $Z \to Y$ such that $Z$ is proper, and $\an{Z}
  \cong \AN{Z}$ over $\an{Y}$.  To complete the proof in this
  setting, it suffices to produce a quasi-finite morphism of schemes
  $t : Z \to X$ such that $\an{t} = s$.  Note that it is equivalent to 
  produce a section to the quasi-compact \'etale morphism $h :
  X\times_Y Z \to Z$ which agrees with the analytic section to the
  morphism of analytic spaces $\an{h}$ induced by $s$. This is
  precisely the content of Proposition \ref{prop:etGAGA}. Now, if we
  attach to $\AN{Z}$ a coherent $\Orb_{\AN{Z}}$-module $\AN{F}$, then
  since $\AN{Z}$ is algebraizable, GAGA for proper $\C$-schemes
  \cite[Exp. XII, Thm. 4.4]{SGA1} implies that $\AN{F}$ is
  algebraizable. Thus the functor $\Psi_X$ is an equivalence in this case.

{\par\noindent}\textbf{Finite type $\C$-schemes.} Next, we just
assume that $X$ is a finite type $\C$-scheme. Note that any closed
subscheme $V\hookrightarrow X$ is also a finite type
$\C$-scheme. Thus, by \cite[Cor.\ 5.7.13]{MR0308104}, there is a 
schematic and birational morphism $V' \to V$ such that the structure
morphism of $V'$ over $\C$ factors as $V' \xrightarrow{f_V} Y_V
\xrightarrow{g_V} \spec \C$, where $f_V$ is quasi-compact \'etale and
$g_V$ is projective. By the basic case considered above, the functor
$\Psi_{V'}$ is an equivalence and so by Proposition
\ref{prop:surj_cpoints_dev_case}, we conclude that the functor
$\Psi_X$ is an equivalence. 

{\par\noindent}\textbf{Finite type Deligne-Mumford $\C$-stacks.} Here,
we assume that $X$ is a finite type Deligne-Mumford $\C$-stack. Note that if $V
\hookrightarrow X$ is any closed immersion, then $V$ is also a finite
type Deligne-Mumford $\C$-stack. By \cite[Thm.\ 16.6]{MR1771927},
there is a finite and generically \'etale morphism $V' \to V$, where
$V'$ is a scheme. Since $V'$ is thus a finite type $\C$-scheme, we may
conclude by the case previously considered that the functors
$\Psi_{V'}$ and $\Psi_{V'\times_V V'}$ are equivalences. Hence, by
Proposition \ref{prop:fet_dev} we conclude that the functor $\Psi_X$
is an equivalence. 

{\par\noindent}\textbf{General case.} Fix a
  locally quasi-finite and separated morphism of analytic stacks
  $s : \AN{Z} \to \an{X}$ where $\AN{Z}$ is proper with finite
  diagonal. Let $X^0$ denote the open substack of $X$ which has
  quasi-finite diagonal. Note that since $\AN{Z}$ has finite diagonal,
  then the quasi-finite and separated map $s : \AN{Z} \to \an{X}$
  factors canonically through $\an{X}^0$. Replacing $X$ by $X^0$, we
  may henceforth assume that $X$ has quasi-finite and separated
  diagonal. Next, let $O_X$  
  denote the category of quasi-compact open subsets of $X$. We note
  that $\{U\}_{U\in O_X}$ is an open cover of $X$ 
  and so $\{s^{-1}(\an{U})\}_{U\in O_X}$ is an open cover of
  $\AN{Z}$. Since $\AN{Z}$ is a compact topological space, and the
  exhibited cover is closed under finite unions, there is an
  open immersion $U \hookrightarrow X$ such that the map
  $\AN{Z} \to \an{X}$ factors uniquely through $\an{U}$ and $\an{U}$
  is a finite type $\C$-stack with quasi-finite and separated
  diagonal. By the previous case considered, we conclude that the functor
  $\Psi_{X}$ is essentially surjective, thus an equivalence.  
\end{proof}
Given a morphism of ringed spaces $f
: U \to V$, then we say that $f$ is \textbf{Stein} if the map of
sheaves $f^\sharp : \Orb_V \to f_*\Orb_U$ is an isomorphism. By
\cite[\S10.6.1]{MR755331}, if the morphism $f$ is a proper morphism of
analytic spaces, then there is a \textbf{Stein factorization} $U
\xrightarrow{\STEIN{f}} \STEIN{U} \xrightarrow{\STEINFAC{f}} V$, where
the morphism $\STEIN{f}$ is proper, Stein, surjective, with connected
fibers, and the morphism $\STEINFAC{f}$ is finite. Similarly, if the
morphism $f$ is a proper morphism of locally noetherian schemes, then by
\cite[\textbf{III}, 4.3.1]{EGA}, the morphism $f$ has a Stein
factorization $U  \xrightarrow{\STEIN{f}} \STEIN{U}
\xrightarrow{\STEINFAC{f}} V$, whose formation commutes with flat base
change on $V$. What is important here is that by
\cite[Exp. XII, Thm. 4.2]{SGA1}, the formation of the Stein factorization
commutes with analytification. 
\begin{lem}\label{lem:stein_analytic}
  Fix a proper morphism of locally of finite type
  $\C$-schemes $\pi : Y \to X$, and a locally quasi-finite and separated
  morphism of analytic spaces $\sigma : \AN{Z} \to \an{X}$ such that
  $\AN{Z}$ is proper, and the analytic space $\AN{Z}_{\an{Y}}$ is
  algebraizable to a quasi-finite and separated $Y$-scheme
  $W$. In the Stein 
  factorization of the proper morphism $(\an{\pi})_{\AN{Z}} :
  \AN{Z}_{\an{Y}}\to \AN{Z}$ of analytic spaces, $\AN{Z}_{\an{Y}} 
  \xrightarrow{\STEIN{(\an{\pi})_{\AN{Z}}}} \STEIN{\AN{Z}_{\an{Y}}}
  \xrightarrow{\STEINFAC{(\an{\pi})_{\AN{Z}}}} \AN{Z}$, the analytic space
  $\STEIN{\AN{Z}_{\an{Y}}}$ is algebraizable to a quasi-finite and
  separated $X$-scheme.
\end{lem}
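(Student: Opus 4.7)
The plan is to combine Zariski's Main Theorem applied to $W \to Y$ with Grothendieck's coherence theorem for the proper morphism $\pi$ to produce a candidate finite $X$-scheme, and then to cut out the correct open subscheme by comparing fibers. First, factor $W \to Y$ as an open immersion $W \hookrightarrow \bar W$ followed by a finite morphism $\bar f : \bar W \to Y$, and write $\bar W = \spec_Y \bar{\AN{A}}$ for the coherent $\Orb_Y$-algebra $\bar{\AN{A}} := \bar f_* \Orb_{\bar W}$. Since $\pi$ is proper, $\pi_* \bar{\AN{A}}$ is a coherent $\Orb_X$-algebra, so $\bar T := \spec_X(\pi_* \bar{\AN{A}})$ is a finite $X$-scheme: $\bar W \to \bar T \to X$ is the algebraic Stein factorization of the proper morphism $\bar W \to X$. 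By the commutation of Stein factorization with analytification (SGA 1, Exp.\ XII, Thm.\ 4.2), $\an{\bar T}$ is canonically identified with the analytic Stein factorization of $\an{\bar W} \to \an{X}$.

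Next I analyze the fibers of $\an{W} = \AN{Z}_{\an{Y}} \to \an{X}$. Since $\sigma$ is locally quasi-finite and $\AN{Z}$ is compact, each fiber $\AN{Z}_x$ is finite; base change then yields $\an{W}_x \cong \bigsqcup_{z \in \AN{Z}_x} \an{Y}_x$, a finite disjoint union of copies of the proper fiber $\an{Y}_x$ of $\an{\pi}$, which is in particular compact. Because $\an{W} \hookrightarrow \an{\bar W}$ is an open immersion, $\an{W}_x$ is open in the proper analytic space $\an{\bar W}_x$; being simultaneously open and compact, it is clopen, hence a union of connected components of $\an{\bar W}_x$.

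Let $\phi : \bar W \to \bar T$ denote the algebraic Stein map. Its image $\phi(\bar W \setminus W)$ is closed in $\bar T$ (as $\phi$ is proper), so $T := \bar T \setminus \phi(\bar W \setminus W)$ is an open subscheme of the finite $X$-scheme $\bar T$, making $T \to X$ quasi-finite and separated. Analytifying, the clopen-fibers observation gives $(\an{\phi})^{-1}(\an{T}) = \an{W}$, so the restriction $\an{\phi}|_{\an{W}} : \an{W} \to \an{T}$ is proper and satisfies $(\an{\phi}|_{\an{W}})_* \Orb_{\an{W}} \cong \Orb_{\an{T}}$, i.e., it is a Stein morphism. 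Sending each point of $\an{T}$ (a connected component $C$ of some $\an{\bar W}_x$ contained in $\an{W}$) to the unique $z \in \AN{Z}_x$ with $C \subset \an{W}_z$ defines a morphism $\an{T} \to \AN{Z}$ with finite fibers, namely the connected components of each proper space $\an{W}_z$. By uniqueness of the Stein factorization of the proper morphism $\an{W} \to \AN{Z}$, this identifies $\an{T}$ with $\STEIN{\AN{Z}_{\an{Y}}}$ over $\AN{Z}$, and therefore over $\an{X}$.

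The hardest step is to verify that the natural map $\an{T} \to \AN{Z}$ constructed above really is an analytic morphism, and is finite: this requires a careful local analysis of how the connected components of the fibers of the proper morphism $\an{\bar W} \to \an{\bar T}$ propagate as points of $\an{X}$ vary, in concert with the locally quasi-finite structure of $\sigma : \AN{Z} \to \an{X}$.
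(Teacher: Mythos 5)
Your reduction tracks the paper's own proof quite closely: both arguments compactify $W$ over $Y$ via Zariski's Main Theorem, pass to the algebraic Stein factorization $\bar W \to \bar T \to X$ of the proper composite $\bar W \to X$, show that $W$ is the full preimage of an open subscheme $T \subset \bar T$ (your computation $\an{W}_x \cong \bigsqcup_{z\in\AN{Z}_x}\an{Y}_x$, making $\an{W}_x$ clopen in $\an{\bar W}_x$, is a legitimate substitute for the paper's observation that $W_{\bar w}\to \bar W_{\bar w}$ is an open and closed immersion into a connected fiber of $\bar W \to \bar T$), and then aim to identify $\an{T}$ with $\STEIN{\AN{Z}_{\an{Y}}}$ by uniqueness of the Stein factorization of $\an{W}\to\AN{Z}$.

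The gap is exactly where you flag it: you never construct the comparison map $\alpha : \an{T}\to\AN{Z}$ as a morphism of analytic spaces, and without it the appeal to uniqueness of Stein factorizations has nothing to compare. This step does not require the ``careful local analysis of how connected components propagate'' that you anticipate; it has a short ringed-space solution, and that solution is the real content of the paper's proof. Write $\beta:\an{W}\to\an{T}$ for the restricted Stein map, which you have already shown is proper, surjective, and Stein. Define $\alpha$ set-theoretically as you do (each $\beta$-fiber is connected and lands in a single quasi-finite fiber of $\sigma$, hence collapses to one point of $\AN{Z}$). Continuity is then automatic: $\beta$ is proper and surjective, hence submersive, so $\an{T}$ carries the quotient topology from $\an{W}$, and $\alpha\circ\beta=(\an{\pi})_{\AN{Z}}$ is continuous. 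The map on structure sheaves is obtained from the composition
\[
\Orb_{\AN{Z}} \to ((\an{\pi})_{\AN{Z}})_*\Orb_{\an{W}} \cong \alpha_*\beta_*\Orb_{\an{W}} \cong \alpha_*\Orb_{\an{T}},
\]
the last isomorphism holding because $\beta$ is Stein. With $\alpha$ in hand, $\an{W}\to\an{T}\to\AN{Z}$ is a proper Stein morphism followed by a morphism whose composite is $(\an{\pi})_{\AN{Z}}$, so it is \emph{the} Stein factorization, and finiteness of $\alpha$ comes for free. Supplying this construction closes the gap; everything else in your write-up is sound.
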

\begin{proof}
  By \cite[\textbf{IV}, 18.12.13]{EGA}, there is a finite 
  $Y$-scheme $\bar{t} : \bar{W} \to Y$ and an open immersion $\jmath :
  W \hookrightarrow \bar{W}$. Define the map $\rho := \pi\circ \bar{t} :
  \bar{W} \to X$, which is proper, and consider the Stein
  factorization $\bar{W} \xrightarrow{\STEIN{\rho}}
\STEIN{\bar{W}} \xrightarrow{\STEINFAC{\rho}} X$. We claim that 
$\STEIN{\rho}^{-1}\STEIN{\rho}(|W|) = |W|$. In particular, since
the map $\STEIN{\rho}$ is proper and surjective, it is universally
submersive, thus it will follow that $\STEIN{\rho}(|W|)$ is an open
subset of $|\STEIN{\bar{W}}|$. Since the separated morphism $W \to
\STEIN{\bar{W}}$ has proper fibers, it follows that for any $w\in
\STEIN{\bar{W}}$, the morphism on the geometric fibers over $w$:
$W_{\bar{w}} \to \bar{W}_{\bar{w}}$ is an open and 
closed immersion. Since $\bar{W}_{\bar{w}}$ is geometrically
connected, we conclude that $W_{\bar{w}} = \bar{W}_{\bar{w}}$ or
$\emptyset$, and the 
claim is proved. We let the quasi-finite and separated morphism $s'
:Z' \to X$ denote the induced open subscheme of
$\STEIN{\rho}(\bar{W})$ defined by the image of $|W|$. It now remains
to construct a unique morphism of  
analytic spaces $\alpha : \an{Z}' \to \AN{Z}$ which is compatible with
the induced map $\beta : \an{W} \to \an{Z}'$. Indeed, since $\an{W} \to
\an{Z}'$ is proper and Stein, then $\an{W} \to \an{Z}' \to \AN{Z}$ is
the Stein factorization of $(\an{\pi})_{\AN{Z}}$.

We first define the map $\alpha$ set-theoretically. For $w\in
\an{W}'$, the subset $\beta^{-1}(w) \subset \an{W}$ is closed and
connected, thus since $(\an{\pi})_{\AN{Z}}$ is proper, we deduce that
$u_w:=(\an{\pi})_{\AN{Z}}\beta^{-1}(w)$ is a closed and connected
subset of $|\AN{Z}|$. Moreover, $u_w$ is also
quasi-finite over the image of $w$ in $\an{X}$ and we deduce that
$u_w$ is a single point. Hence, we obtain a well-defined map of sets $\alpha
: |\an{W}'| \to |\AN{Z}|$. Clearly, $\alpha$ is continuous, as 
the maps $(\an{\pi})_{\AN{Z}}$ and $\beta$ are surjective and
submersive. We obtain the map on functions from the following 
composition:
\[
\Orb_{\AN{Z}} \to ((\an{\pi})_{\AN{Z}})_*\Orb_{\an{W}} \cong
\alpha_*\beta_*\Orb_{\an{W}} \cong \alpha_*\Orb_{\an{W}'},
\]
with the last isomorphism because $\beta$ is Stein. 
\end{proof}
We have two easy lemmata.
\begin{lem}\label{lem:AR_analy}
  Fix a compact analytic stack $\AN{X}$, a
  $\Orb_{\AN{X}}$-coherent sheaf $\AN{F}$ and a coherent
  $\Orb_{\AN{X}}$-ideal $\AN{I}$. 
  \begin{enumerate}
  \item \label{lem:AR_analy1} If $|\supp \AN{F}| \subset |V(\AN{I})|$, then
    there is a $k>0$ such that $\AN{I}^k\AN{F} = (0)$.
  \item \label{lem:AR_analy2} Given a coherent subsheaf $\AN{F}'
    \subset \AN{F}$ such that $\AN{I}\AN{F}'=(0)$, then there is a $k
    > 0)$ such that $(\AN{I}^k\AN{F}) \cap \AN{F}' = (0)$.  
  \end{enumerate}
\end{lem}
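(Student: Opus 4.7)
The plan is to prove both assertions by reducing them stalkwise to classical facts from commutative algebra, using coherence to propagate the pointwise conclusions to open neighborhoods, and then appealing to the compactness of $|\AN{X}|$ to extract a uniform exponent $k$ from a finite cover. Throughout, I would pull back to a smooth analytic chart $U\to\AN{X}$ so that the relevant statements about coherent sheaves, annihilators, and noetherianity of local rings are applied in their standard analytic-space formulation.

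For \itemref{lem:AR_analy1}, I would let $\mathcal{J} := \Ann \AN{F}$, which is a coherent $\Orb_{\AN{X}}$-ideal whose zero locus has the same underlying set as $|\supp\AN{F}|$. The hypothesis then reads $|V(\mathcal{J})|\subseteq |V(\AN{I})|$, and R\"uckert's analytic Nullstellensatz, applied at each point $x\in|\AN{X}|$ on a chart, yields $\AN{I}_x \subseteq \sqrt{\mathcal{J}_x}$. Since $\AN{I}_x$ is finitely generated and $\Orb_{\AN{X},x}$ is noetherian, some power $\AN{I}_x^{k_x}$ lies in $\mathcal{J}_x$, so $\AN{I}_x^{k_x}\AN{F}_x = 0$. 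The sheaf $\AN{I}^{k_x}\AN{F}$ is coherent, so its vanishing at $x$ extends to an open neighborhood $V_x$ of $x$. By compactness of $|\AN{X}|$ one covers by finitely many $V_{x_1},\dots,V_{x_n}$, and $k := \max_i k_{x_i}$ gives $\AN{I}^k\AN{F} = 0$ globally.

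For \itemref{lem:AR_analy2}, I would replace the Nullstellensatz step by the classical Artin--Rees lemma over the noetherian local ring $\Orb_{\AN{X},x}$: applied to the finitely generated submodule $\AN{F}'_x \subseteq \AN{F}_x$, it provides $k_{0,x}$ with
\[
\AN{I}_x^n \AN{F}_x \cap \AN{F}'_x \;=\; \AN{I}_x^{\,n-k_{0,x}}\bigl(\AN{I}_x^{k_{0,x}}\AN{F}_x \cap \AN{F}'_x\bigr)
\]
for all $n \geq k_{0,x}$. Setting $k_x := k_{0,x}+1$ and using the hypothesis $\AN{I}_x\AN{F}'_x = 0$, the right-hand side lies in $\AN{I}_x\AN{F}'_x = 0$, so $\AN{I}_x^{k_x}\AN{F}_x \cap \AN{F}'_x = 0$. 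The intersection $(\AN{I}^{k_x}\AN{F}) \cap \AN{F}'$ is coherent (being a finite intersection of coherent subsheaves of $\AN{F}$), so its vanishing at $x$ spreads to a neighborhood of $x$, and compactness finishes the argument as before.

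The main point requiring care, rather than a genuine obstacle, is justifying the stalkwise-to-neighborhood propagation on a stack: this is handled by pulling back to a smooth analytic chart, where coherence of $\AN{I}$, $\AN{F}$, $\Ann \AN{F}$, their products and intersections, as well as the fact that formation of $\AN{I}^k\AN{F}$ and of finite intersections commutes with taking stalks, are standard in the analytic category. The compactness of $|\AN{X}|$ is what allows the finitely many local exponents $k_x$ to be replaced by a single global $k$.
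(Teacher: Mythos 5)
Your proposal is correct and follows essentially the same route as the paper's own proof: R\"uckert's Nullstellensatz applied stalkwise for part \itemref{lem:AR_analy1}, the Artin--Rees lemma over the noetherian local ring (the paper cites Atiyah--Macdonald, Cor.\ 10.10) for part \itemref{lem:AR_analy2}, with coherence spreading the stalkwise vanishing to open neighborhoods and compactness of $|\AN{X}|$ yielding a uniform exponent $k$. The only addition is your explicit remark about passing to a smooth chart of the stack, which the paper leaves implicit.
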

\begin{proof}
  For \itemref{lem:AR_analy1}, the R\"uckert Nullstellansatz
  \cite[\S3.2]{MR755331} implies that for any $x\in \AN{X}$, there is
  an open neighborhood $\AN{U}_x$ and a $k_x$ such that
  $(\mathscr{I}^{k_x}\AN{F})_{\AN{U}_x} = (0)$. Since $\AN{X}$ is
  compact, we conclude the result. For \itemref{lem:AR_analy2}, fix
  $x\in \AN{X}$ 
  and observe that 
  since the local ring $\Orb_{\AN{X},x}$ is noetherian, by
  \cite[Cor.\ 10.10]{MR0242802} $\exists \,k_x 
  > 0$ such that $(\mathscr{I}^{k_x}_x\AN{F}_{x}) \cap \AN{F}'_x =
  (0)$. Note that for any $x\in \AN{X}$, the sheaf of
  $\Orb_{\AN{X}}$-modules $\AN{G}^x = (\AN{I}^{k_x}\AN{F}) \cap
  \AN{F}'$ is coherent, and has closed support. Thus, as $(\AN{G}^x)_x =
  (0)$, then there is an open neighborhood $U_x$ of $x\in \AN{X}$ such
  that $\AN{G}^x\mid_{U_x} = (0)$. Since $\AN{X}$ is compact in the
  analytic topology, we conclude that there is a finite set of points
  $x_1$, $\dots$, $x_n \in \AN{X}$ such that
  $\AN{G}^{x_i}\mid_{U_{x_i}} = (0)$ and $\{U_{x_i}\}_{i=1}^n$ covers
  $\AN{X}$. Take $k = \max_{i} k_{x_i}$, then $(\AN{I}^k\AN{F})\cap
  \AN{F}' =\bigcap_{i=1}^n \AN{G}^{x_i} = (0)$, as claimed. 
\end{proof}
\begin{lem}\label{lem:analy_noeth_ind}
  Fix a proper morphism of finite type 
  $\C$-schemes $\pi : Y \to X$,  and a locally quasi-finite and
  separated morphism of analytic spaces $\sigma
  : \AN{Z} \to \an{X}$, such that $\AN{Z}$ is compact. Let $U
  \hookrightarrow X$ be an open subscheme such that 
  the induced map $\pi^{-1}U \to U$ is an isomorphism. Fix a coherent
  ideal sheaf ${I} \ideal \Orb_X$ such that 
  $|V(I)| = |X\setminus U|$. The Stein factorization of the proper
  morphism of analytic spaces $(\an{\pi})_{\AN{Z}} : \AN{Z}_{\an{Y}}
  \to \AN{Z}$ induces a map of coherent $\Orb_{\AN{Z}}$-algebras
  $\STEINFAC{(\an{\pi})_{\AN{Z}}}^\sharp : \Orb_{\AN{Z}} \to
  \STEINFAC{(\an{\pi})_{\AN{Z}}}_*\Orb_{\STEIN{(\an{\pi})_{\AN{Z}}}}$. There
  is a $k>0$ such that the coherent $\Orb_{\AN{Z}}$-ideal 
  $(\an{I})^k_{\AN{Z}}$ annihilates the kernel and cokernel of the map 
  $\STEINFAC{(\an{\pi})_{\AN{Z}}}^\sharp$.  
\end{lem}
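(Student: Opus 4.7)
The plan is to localize the problem: on $\sigma^{-1}(\an{U})$ the morphism $(\an{\pi})_{\AN{Z}}$ is already an isomorphism, so the Stein factorization map becomes trivial there, after which I can invoke the analytic Nullstellensatz (Lemma \ref{lem:AR_analy}\itemref{lem:AR_analy1}) on the compact analytic space $\AN{Z}$.

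First I would observe that forming $\AN{Z}_{\an{Y}} = \AN{Z} \times_{\an{X}} \an{Y}$ commutes with restriction to any open subspace of $\AN{Z}$. Since $\pi : Y \to X$ is an isomorphism over $U$, its analytification $\an{\pi}$ is an isomorphism over $\an{U}$, so the restriction of $(\an{\pi})_{\AN{Z}}$ to the preimage of $\sigma^{-1}(\an{U})$ is an isomorphism. Consequently, the unit map $\Orb_{\AN{Z}} \to ((\an{\pi})_{\AN{Z}})_* \Orb_{\AN{Z}_{\an{Y}}}$ restricts to an isomorphism over $\sigma^{-1}(\an{U})$. Using that $\STEIN{(\an{\pi})_{\AN{Z}}}$ is Stein to identify the target with $\STEINFAC{(\an{\pi})_{\AN{Z}}}_* \Orb_{\STEIN{\AN{Z}_{\an{Y}}}}$, this map is precisely $\STEINFAC{(\an{\pi})_{\AN{Z}}}^\sharp$, so its kernel $\mathcal{K}$ and cokernel $\mathcal{C}$ vanish on $\sigma^{-1}(\an{U})$.

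Next I would note that $\mathcal{K}$ and $\mathcal{C}$ are coherent $\Orb_{\AN{Z}}$-modules, since $\STEINFAC{(\an{\pi})_{\AN{Z}}}_* \Orb_{\STEIN{\AN{Z}_{\an{Y}}}}$ is $\Orb_{\AN{Z}}$-coherent by the finiteness of $\STEINFAC{(\an{\pi})_{\AN{Z}}}$. Their closed supports are thus contained in
\[
\AN{Z} \setminus \sigma^{-1}(\an{U}) \;=\; \sigma^{-1}(\an{X} \setminus \an{U}) \;=\; \sigma^{-1}(|V(\an{I})|) \;=\; |V((\an{I})_{\AN{Z}})|.
\]
Since $\AN{Z}$ is compact, Lemma \ref{lem:AR_analy}\itemref{lem:AR_analy1} applied to the coherent ideal $(\an{I})_{\AN{Z}}$ and the sheaves $\mathcal{K}$ and $\mathcal{C}$ yields exponents $k_1, k_2 > 0$ with $(\an{I})_{\AN{Z}}^{k_1} \mathcal{K} = (0)$ and $(\an{I})_{\AN{Z}}^{k_2} \mathcal{C} = (0)$; taking $k = \max(k_1, k_2)$ finishes the argument. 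The reasoning is essentially formal once the supports have been correctly identified; the only obstacle worth flagging is ensuring the coherence of the target, which rests on the finiteness of the second factor of the Stein factorization recalled in the text above.
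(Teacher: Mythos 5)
Your argument is correct and is essentially identical to the paper's proof: restrict to $\sigma^{-1}(\an{U})$ where the map is an isomorphism, conclude the kernel and cokernel are coherent sheaves supported on $|V((\an{I})_{\AN{Z}})|$, and apply Lemma \ref{lem:AR_analy}\itemref{lem:AR_analy1} using compactness of $\AN{Z}$. The only difference is that you spell out the identification of $\STEINFAC{(\an{\pi})_{\AN{Z}}}^\sharp$ with the unit map via the Stein property, which the paper leaves implicit.
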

\begin{proof}
  Define the open subset $\AN{U}' = \sigma^{-1}(\an{U})$, then
  certainly we have that the map
  $\STEINFAC{(\an{\pi})_{\AN{Z}}}^\sharp$ is an
  isomorphism when restricted to the open subset $\AN{U}'$. In
  particular, it follows that the kernel and cokernel of the map
  $\STEINFAC{(\an{\pi})_{\AN{Z}}}^\sharp$ are supported on the analytic
  set $|V((\an{I})_{\AN{Z}})|$. Now apply Lemma
  \ref{lem:AR_analy}\itemref{lem:AR_analy1}.
\end{proof}
For a morphism of ringed topoi $f :
U \to V$, we denote the induced map on structure sheaves by $f^\sharp :
\Orb_V \to f_*\Orb_U$. Define the \textbf{conductor} of the 
morphism $f$ to be $\fml{C}_f := \Ann_{\Orb_V}(\coker f^\sharp)$. The
result that follows is the main d\'evissage technique used in proving
Propositions \ref{prop:surj_cpoints_dev_case} and \ref{prop:fet_dev}.
\begin{lem}\label{lem:inj_surj_analy}
  Consider a finite type $\C$-stack $X$ with locally separated
  diagonal. Suppose that we have a commutative diagram of analytic
  stacks: 
  \[
  \xymatrix@!0{\AN{Z}' \ar[rr]^f \ar[dr]_{s'} & & \AN{Z} \ar[dl]^s\\ &
    \an{X} & }
  \]
  where $\AN{Z}'$ and $\AN{Z}$ are compact. Suppose that $\AN{Z}'$ is
  algebraizable to a quasi-finite and separated algebraic $X$-stack,
  $s$ and $s'$ are locally quasi-finite and separated, and $f$ is finite and
  surjective. 
  \begin{enumerate}
 \item \label{lem:inj_surj_analy2} If there is a coherent ideal
    $\mathscr{I} \ideal \Orb_{\AN{Z}}$ such that $\mathscr{I} \cap
    \ker f^\sharp = (0)$ and $V(\mathscr{I})$ is algebraizable to
    a quasi-finite and separated $X$-stack, then  
    there is a factorization of the morphism $f$ as $\AN{Z}' \to
    \AN{Z}'' \xrightarrow{\beta} \AN{Z}$ such that $\beta$ is finite
    and surjective, $\beta^\sharp : \Orb_{\AN{Z}} \to
    \beta_*\Orb_{\AN{Z}''}$ is injective, $\AN{Z}''$ is algebraizable
    to a quasi-finite and separated algebraic $X$-stack, and
    $\fml{C}_f \subset \fml{C}_\beta$.
  \item \label{lem:inj_surj_analy3} If there is a coherent ideal
    $\mathscr{J} \ideal \Orb_X$ such that for any $k$ and any 
    coherent sheaf of $\Orb_{\AN{Z}}$-ideals $\mathscr{I} \supset 
    (\an{\mathscr{J}})_{\AN{Z}}^k$ the analytic $\an{X}$-stack
    $V(\mathscr{I})$ is algebraizable to a quasi-finite and separated 
    algebraic $X$-stack, and $\ker f^\sharp$ and $\coker
    f^\sharp$ are annihilated by $(\an{\mathscr{I}})_{\AN{Z}}^j$ for
    some $j$, then $\AN{Z}$ is algebraizable to a quasi-finite and
    separated algebraic $X$-stack. 
  \end{enumerate}
\end{lem}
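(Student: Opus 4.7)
The plan for part \itemref{lem:inj_surj_analy2} is to construct $\AN{Z}''$ as an analytic pushout $\AN{Z}' \amalg_{\AN{Z}'_V} V$, where $V := V(\mathscr{I})$ and $\AN{Z}'_V := \AN{Z}' \times_{\AN{Z}} V$. Concretely, $\beta : \AN{Z}'' \to \AN{Z}$ will be the relative analytic spec over $\AN{Z}$ of the coherent $\Orb_{\AN{Z}}$-algebra
\[
\mathscr{A} := f_*\Orb_{\AN{Z}'} \times_{g_*\Orb_{\AN{Z}'_V}} \Orb_V,
\]
where $g : \AN{Z}'_V \to V$ is the structure map. The hypothesis $\mathscr{I} \cap \ker f^\sharp = (0)$ is precisely what makes the diagonal map $\Orb_{\AN{Z}} \to \mathscr{A}$ injective, giving injectivity of $\beta^\sharp$. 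The factorization of $f$ through $\beta$ comes from the first projection of the fiber product; $\beta$ is surjective because $f$ is; and the conductor inclusion $\fml{C}_f \subset \fml{C}_\beta$ is a direct computation on the fiber-product description of $\coker \beta^\sharp$.

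To algebraize $\AN{Z}''$, I would algebraize the entire pushout diagram. By hypothesis $\AN{Z}'$ and $V$ are algebraizable to quasi-finite and separated algebraic $X$-stacks. The closed substack $\AN{Z}'_V \hookrightarrow \AN{Z}'$ is cut out by the coherent ideal generated by $\mathscr{I}$, which algebraizes by classical GAGA (Appendix \ref{app:clGAGA}) applied to the proper algebraic stack algebraizing $\AN{Z}'$. The finite $X$-map $g$ then algebraizes via GAGA for the coherent algebra $g_*\Orb_{\AN{Z}'_V}$ on the proper algebraic stack algebraizing $V$. One forms the algebraic Ferrand-type pushout in the $(2,1)$-category of quasi-finite separated algebraic $X$-stacks and checks that its analytification recovers $\AN{Z}''$ by comparing structure sheaves.

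For part \itemref{lem:inj_surj_analy3} I would iterate part \itemref{lem:inj_surj_analy2}. By Lemma \ref{lem:AR_analy}\itemref{lem:AR_analy2}, applied to $\ker f^\sharp \subset \Orb_{\AN{Z}}$ with the annihilating ideal $(\an{\mathscr{J}})_{\AN{Z}}^j$, there is an integer $k$ with $(\an{\mathscr{J}})_{\AN{Z}}^{jk} \cap \ker f^\sharp = (0)$. Applying part \itemref{lem:inj_surj_analy2} with $\mathscr{I} := (\an{\mathscr{J}})_{\AN{Z}}^{jk}$ (algebraizable by hypothesis) yields $\beta : \AN{Z}'' \to \AN{Z}$ with $\beta^\sharp$ injective, $\AN{Z}''$ algebraizable, and $\coker \beta^\sharp$ still annihilated by $(\an{\mathscr{J}})_{\AN{Z}}^j$ since $\fml{C}_\beta \supset \fml{C}_f$. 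By Lemma \ref{lem:AR_analy}\itemref{lem:AR_analy1}, I may choose $N$ so that $(\an{\mathscr{J}})_{\AN{Z}}^N \cdot \beta_*\Orb_{\AN{Z}''} \subset \Orb_{\AN{Z}}$, which forces the fiber-product identity
\[
\Orb_{\AN{Z}} = \beta_*\Orb_{\AN{Z}''} \times_{\beta_*\Orb_{\AN{Z}''}/(\an{\mathscr{J}})_{\AN{Z}}^N \beta_*\Orb_{\AN{Z}''}} \Orb_{\AN{Z}}/(\an{\mathscr{J}})_{\AN{Z}}^N.
\]
Thus $\AN{Z}$ is the analytic pushout of $\AN{Z}''$ with $V((\an{\mathscr{J}})_{\AN{Z}}^N)$ along their common closed substack, and the same Ferrand-type construction algebraizes $\AN{Z}$.

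The principal obstacle is justifying the Ferrand-type pushout step: I must show that the indicated algebraic pushout exists as a quasi-finite and separated algebraic $X$-stack, and that analytification is compatible with it, so that analytifying the algebraic pushout produces the analytic pushout specified by the fiber-product formula for structure sheaves. Everything else reduces to a sheaf-theoretic manipulation or an application of classical GAGA to a proper algebraic substack.
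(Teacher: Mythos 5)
Your proposal follows essentially the same route as the paper's proof: both parts are reduced to a Ferrand-type pushout of the algebraizable pieces, with the two key inputs being that analytification commutes with such pinchings of quasi-finite separated $X$-stacks (the paper's Lemma \ref{lem:pushouts_analy}) and that the analytic conductor square is cocartesian (Lemma \ref{lem:conductor_anal}); the ``principal obstacle'' you flag is exactly what those two lemmas supply, and Lemma \ref{lem:AR_analy} is used just as you indicate.

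One correction is needed in part \itemref{lem:inj_surj_analy3}: the fiber-product identity is not correct as written. For a finite injective extension $A \hookrightarrow B$ with conductor $\fml{c}$ and an ideal $I \subset \fml{c}$ of $A$, the fiber product $B \times_{B/IB} A/I$ is in general strictly larger than $A$: a pair $(b,\bar{a})$ with $b \equiv a$ in $B/IB$ only forces $b - a \in IB \cap A$, not $b - a \in I$. The correct statement is $A = B \times_{B/IB} A/IB$, where $IB$ is regarded as an ideal of $A$ via $IB \subset \fml{c}B \subset A$. Accordingly, the closed analytic substack of $\AN{Z}$ along which you glue must be $V(\widetilde{\mathscr{I}})$ for the \emph{extended} ideal $\widetilde{\mathscr{I}} = (\an{\mathscr{J}})^N_{\AN{Z}}\cdot\beta_*\Orb_{\AN{Z}''}$, rather than $V((\an{\mathscr{J}})^N_{\AN{Z}})$ itself; this is precisely the role of the ideal $\widetilde{\mathscr{I}}$ and of Lemma \ref{lem:conductor_anal} in the paper. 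The repair costs nothing, since $\widetilde{\mathscr{I}} \supset (\an{\mathscr{J}})^N_{\AN{Z}}$ and the hypothesis of \itemref{lem:inj_surj_analy3} is deliberately phrased to give algebraizability of $V(\mathscr{I})$ for \emph{every} coherent ideal $\mathscr{I}$ containing a power of $(\an{\mathscr{J}})_{\AN{Z}}$. With that adjustment, and granting the two pushout lemmas, your argument for both parts matches the paper's.
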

To prove Lemma \ref{lem:inj_surj_analy} and Proposition
\ref{prop:fet_dev}, it will be necessary to understand certain pushouts for
analytic stacks, which we will  
defer until later in this section. We are, however, ready to prove
Proposition \ref{prop:surj_cpoints_dev_case}. 
\begin{proof}[Proof of Proposition \ref{prop:surj_cpoints_dev_case}]
  For a Zariski closed subset $|V| \subset |X|$, let $P_{|V|}$
  be the statement: for any closed subscheme $V_0 \hookrightarrow X$
  such that $|V_0| = |V|$, the functor $\Psi_{V_0}$ is an
  equivalence. Since the statement $P_{\emptyset}$ is trivially true,
  by the principle of noetherian induction and the statement of the
  Proposition, we will have proven the Lemma if we show the following:
  the truth of the statement $P_{|V|}$ for all proper closed subsets
  $|V| \subsetneq |X|$ implies the truth of the statement $P_{|X|}$.

  By hypothesis, there is a proper and birational
  $S$-morphism $p : X' \to X$ such that $\Psi_{X'}$ is an
  equivalence. Consider a dense open subscheme $U \subset X$ for which 
  $p^{-1}U \to U$ is an isomorphism, and let $I$ be a coherent sheaf
  of ideals with support $|X\setminus U|$. Next, suppose we have a  
  locally quasi-finite and separated morphism of analytic spaces $\sigma
  : \AN{Z} \to \an{X}$, where $\AN{Z}$ is proper. The
  assumptions on $X'$ ensures that $\AN{Z}_{\an{X}'}$ is
  algebraizable to a quasi-finite and separated 
  $X'$-scheme. By Lemma \ref{lem:stein_analytic}, we conclude that 
  in the Stein factorization of the morphism $(\an{p})_{\AN{Z}} :
  \AN{Z}_{\an{X}'} \to \AN{Z}$, that $\STEIN{\AN{Z}_{\an{X}'}}$ is
  algebraizable to a quasi-finite and separated
  $X$-scheme. For notational brevity, we set $\AN{Z}' =
  \STEIN{\AN{Z}_{\an{X}'}}$ and let the morphism $f : \AN{Z}' \to 
  \AN{Z}$ be the morphism $\STEINFAC{(\an{p})_{\AN{Z}}}$ from the
  Stein factorization of the morphism $(\an{p})_{\AN{Z}}$. By Lemma
  \ref{lem:analy_noeth_ind}, the kernel and cokernel of the map
  $f^\sharp : \Orb_{\AN{Z}} \to f_*\Orb_{\AN{Z}'}$ are annihilated by
  $(\an{I})^j_{\AN{Z}}$ for some $j$. By noetherian induction,
  $V((\an{I})_{\AN{Z}}^k)$ is algebraizable to a quasi-finite and
  separated $X$-scheme for all $k$. Hence, we may apply
  Lemma \ref{lem:inj_surj_analy}\itemref{lem:inj_surj_analy2} to
  conclude that $\AN{Z}$ is algebraizable to a quasi-finite and
  separated $X$-scheme.

  Hence, given $(\AN{Z},\AN{F}) \in \pscSHV{\an{X}}$, by what we have
  proven, we know that the locally quasi-finite and separated morphism
  $\AN{Z} \to \an{X}$ is algebraizable to a quasi-finite and separated
  morphism of schemes $Z \to X$, where $Z$ is $\C$-proper. By GAGA for
  proper $\C$-schemes \cite[Exp. XII, Thm. 4.4]{SGA1}, we deduce that
  the coherent $\Orb_{\an{Z}}$-module $\AN{F}$ is
  algebraizable. Hence, the functor $\Psi_X$ is essentially surjective.
\end{proof}
Next, observe that given a diagram of ringed spaces $E:=[Z^1
\leftarrow Z^3 \rightarrow Z^2]$,  let the topological space $|Z_4|$
be the colimit 
of the induced diagram $|E|:= [|Z^1| \leftarrow |Z^3| \rightarrow |Z^2|]$ in
the category of topological spaces. We have induced maps $m^i : |Z^i|
\to |Z^4|$, and the colimit of the diagram $E$ in the category of
ringed spaces is the ringed space $Z^4:=(|Z^4|, m^1_*\Orb_{Z^1}
\times_{m^3_*\Orb_{Z^3}}m^2_*\Orb_{Z^2})$. We may promote the 
morphisms of topological spaces  $m^i : |Z^i| \to |Z^4|$ to morphisms
of ringed spaces $m^i : Z^i \to Z^4$. If the ringed 
spaces $Z^i$ are locally ringed, and the maps $m^i$ are morphisms of
locally ringed spaces, then since $|Z^1| \amalg |Z^2| \to |Z^4|$ is
surjective, it is easy to see that $Z^4$ is the colimit of the diagram
$E$ in the category of \emph{locally} ringed spaces. These
observations will be of use when the ringed spaces $Z^i$ are schemes
or analytic spaces. Indeed, to show that a scheme (resp. analytic
space) is a pushout of some schemes (resp. analytic spaces), it will
suffice to show that it is the pushout in the category of ringed
spaces, and the maps involved are all maps of schemes (resp. analytic
spaces), which will typically be clear.  

Now, let $X$ be a locally noetherian algebraic stack, and suppose that we have
quasi-finite and separated morphisms $s^i : Z^i \to X$ for $i=1$, $2$,
$3$. In addition, assume that we have finite $X$-morphisms $t^j : Z^3 
\to Z^j$ 
for $j=1$, $2$. It was shown in \cite[Thm.\
2.10]{JACKRYDHHILBSTKSCH-2010}, that 
the resulting diagram of algebraic $X$-stacks $[Z^1 \xleftarrow{t^1} Z^3
\xrightarrow{t^2} Z^2]$ has a colimit, $Z^4:=Z^1 \amalg_{Z^3} Z^2$, in
the category of quasi-finite, separated, and representable algebraic
$X$-stacks. Let $m^i : Z_i \to Z_4$ denote the resulting $X$-morphisms,
which are finite. It was also shown [\emph{loc.\ cit.}], that the
Zariski topological space $|Z^4|$ was the colimit of the diagram of
topological spaces $[|Z^1| \xleftarrow{t^1} |Z^3| \xrightarrow{t^2}
|Z^2|]$, and that there was an isomorphism of  coherent sheaves
$\Orb_{Z^4} \to m^1_*\Orb_{Z_1} \times_{m^3_*\Orb_{Z_3}}
m^2_*\Orb_{Z_2}$. All of this commutes with flat base change on
$Z_4$. Retaining this notation, we have the main result needed to
prove Lemma \ref{lem:inj_surj_analy}.  
\begin{lem}\label{lem:pushouts_analy}
  If $X$ is a locally of finite type algebraic $\C$-stack with locally
  separated diagonal, then $\an{Z}^4$ is the
  colimit of the diagram $[\an{Z}^1 \xleftarrow{\an{t}^1} \an{Z}^3
  \xrightarrow{\an{t}^2} \an{Z}^2]$ in the category of locally
  quasi-finite and separated analytic
  stacks over $\an{X}$, and remains so after flat base change on
  $X$. 
\end{lem}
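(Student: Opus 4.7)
The plan is to transport the algebraic pushout description recalled just before the lemma into the analytic category, first establishing the statement at the level of locally ringed spaces and then upgrading to the $2$-category of locally quasi-finite and separated analytic $\an{X}$-stacks.

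First I would show that $\an{Z}^4$ is the pushout of $[\an{Z}^1 \xleftarrow{\an{t}^1} \an{Z}^3 \xrightarrow{\an{t}^2} \an{Z}^2]$ in locally ringed spaces (or, for the stacky version, in ringed topoi). On underlying topological spaces, this is the content of: (i) the set-theoretic pushout identification, which follows from the fact, recalled in the preamble to the lemma, that $|Z^4|$ is the topological pushout of $|Z^1| \leftarrow |Z^3| \to |Z^2|$, restricted to $\C$-valued points; and (ii) the observation that the surjection $\an{m}^1 \amalg \an{m}^2 : \an{Z}^1 \amalg \an{Z}^2 \to \an{Z}^4$ is a closed map, since the $m^i$ are finite and hence the $\an{m}^i$ are proper. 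For structure sheaves, analytification commutes with pushforward along finite morphisms and, being flat, with finite limits of coherent sheaves; applying these two facts to the given identification
\[
\Orb_{Z^4} \xrightarrow{\sim} m^1_*\Orb_{Z^1} \times_{m^3_*\Orb_{Z^3}} m^2_*\Orb_{Z^2}
\]
yields the analytic analogue, which together with the topological assertion shows that $\an{Z}^4$ is the pushout in ringed spaces.

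Next I would verify the universal property in the target $2$-category. When the test object $\AN{P}$ is an analytic space, an $\an{X}$-morphism from $\an{Z}^4$ to $\AN{P}$ is the same as a morphism of locally ringed $\an{X}$-spaces, so the ringed-space pushout of the previous paragraph applies directly. For a general locally quasi-finite and separated analytic $\an{X}$-stack $\AN{P}$, I would appeal to descent along the finite surjection $\an{Z}^1 \amalg \an{Z}^2 \to \an{Z}^4$: a compatible pair of $\an{X}$-morphisms $\an{Z}^1 \to \AN{P}$ and $\an{Z}^2 \to \AN{P}$ together with a prescribed $2$-isomorphism on $\an{Z}^3$ constitutes descent data, which is effective because the diagonal of $\AN{P}$ is representable by analytic spaces (so the cocycle and coboundary checks reduce to the analytic-space case already handled).

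Finally, compatibility with flat base change on $X$ follows from the analogous statement for the algebraic pushout combined with the fact that analytification commutes with fiber products of locally of finite type algebraic $\C$-stacks. The main obstacle I anticipate is the upgrade from ringed spaces to analytic stacks in the preceding paragraph, since one must verify that descent along a proper surjection of analytic stacks is available for maps into a target with representable diagonal; this is routine in the scheme case but requires some bookkeeping for stacks, aided by the hypothesis that $\an{X}$ has locally separated diagonal, which guarantees that the diagonals of the $\an{Z}^i$ are sufficiently controlled for the descent argument to reduce to the analytic-space situation.
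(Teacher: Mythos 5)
Your first step---identifying $\an{Z}^4$ as the pushout in (locally) ringed spaces via the topological bijection on $\C$-points plus properness, the exactness of the analytification pullback, and its compatibility with pushforward along finite morphisms---is essentially the paper's own argument for the case where $X$ is a scheme, and the flat base change claim is handled the same way. The divergence, and the gap, is in your upgrade to the $2$-category of locally quasi-finite and separated analytic $\an{X}$-stacks. You propose to check the universal property against a stacky target $\AN{P}$ by ``descent along the finite surjection $\an{Z}^1\amalg\an{Z}^2\to\an{Z}^4$.'' This map is a pinching: it is finite and surjective but not flat, so it is not a covering for the \'etale topology with respect to which analytic stacks are stacks, and effectivity of descent of morphisms into a stack along proper or finite surjections is not an available tool in the analytic category (it would itself be a nontrivial theorem). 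Moreover, even granting such a descent statement, the data you feed it is not descent data for that cover: genuine descent data consists of a morphism on $\an{Z}^1\amalg\an{Z}^2$ together with a $2$-isomorphism over the full fiber product $(\an{Z}^1\amalg\an{Z}^2)\times_{\an{Z}^4}(\an{Z}^1\amalg\an{Z}^2)$ satisfying a cocycle condition on the triple product, and that fiber product has pieces (for instance $\an{Z}^1\times_{\an{Z}^4}\an{Z}^1$) that are not covered by $\an{Z}^3$. Finally, in the only genuinely stacky case---$X$ an algebraic stack, where $Z^4$ itself need not be a space---your step one was only carried out for ringed spaces, and the ``ringed topoi'' version is never actually established.

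The paper avoids all of this by localizing on the base rather than on $\an{Z}^4$: choose a smooth atlas $X_1\to X$ by a scheme and set $X_2=X_1\times_X X_1$. After pulling back, every object in sight (the $Z^k_i$ and, via the graph construction, any locally quasi-finite and separated test object, since such morphisms are representable by analytic spaces) becomes an analytic space, so the scheme/space case applies; one then presents $\an{Z}^4$ as the quotient of the smooth equivalence relation $[\an{(Z^4_2)}\rightrightarrows\an{(Z^4_1)}]$ and uses ordinary smooth/\'etale descent---for which analytic stacks \emph{are} stacks by definition---together with the universal properties of the local pushouts. If you want to salvage your outline, replace the descent along $\an{Z}^1\amalg\an{Z}^2\to\an{Z}^4$ with this smooth-local-on-$X$ argument.
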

\begin{proof}
For $k=1$, $2$, $3$ let $m^k  : Z^k \to Z^4$ denote the canonical
map. First, we assume that $X$ is a scheme. For $l=1$, $\ldots$, $4$,  
  let $\phi_l : \an{Z}^l \to Z^l$ denote the canonical map of ringed
  spaces. We note once and for all 
  that by \cite[Exp. XII, 1.3.1]{SGA1}, the functor
  $\phi_l^*$ from $\Orb_{Z^l}$-modules to $\Orb_{\an{Z}^l}$-modules is
  \emph{exact}. Also, we have a bijection of sets: 
  \[
  |\an{Z}^1|\amalg_{|\an{Z}^3|} |\an{Z}^2| \to
  Z^1(\C)\amalg_{Z^3(\C)} Z^2(\C) \to Z^4(\C) \to |\an{Z}^4|.
  \]
  Thus, we conclude that the canonical, continuous map $\psi : 
  |\an{Z}^1|\amalg_{|\an{Z}^3|}|\an{Z}^2| \to |\an{Z}^4|$ is a
  bijection. Since it is also a proper map, we conclude that $\psi$ is a
  homeomorphism. Also, we have that the natural map $\psi^\sharp :
  \Orb_{\an{Z}^4} \to
  (\an{m})^1_*\Orb_{\an{Z}^1}\times_{(\an{m})^3_*\Orb_{\an{Z}^3}}
  (\an{m})^2_*\Orb_{\an{Z}^2}$  factors as the sequence of bijections:
  \begin{align*}
    \Orb_{\an{Z}^4} &\to
    \phi_4^*(m^1_*\Orb_{Z^1}\times_{m^3_*\Orb_{Z^3}}
    m^2_*\Orb_{Z^2})\\
    &\to \phi_4^*m^1_*\Orb_{Z^1} \times_{\phi_4^*m^3_*\Orb_{Z^3}}
    \phi_4^*m^2_*\Orb_{Z^2} \\
    &\to (\an{m}^1)_*\Orb_{\an{Z}^1}
    \times_{(\an{m}^3)_*\Orb_{\an{Z}^3}} (\an{m}^2)_*\Orb_{\an{Z}^2}.
  \end{align*}
  Hence, we conclude that $\an{Z}^4$ is the colimit of the diagram in
  the category of ringed spaces, and remains so after flat base change
  on $X$. It is clear that this implies that $\an{Z}^4$ is the colimit
  in the category of analytic spaces.

  Next, we assume that $X$ is an algebraic space. Let $X_1 \to X$ be an
  \'etale cover by a scheme, and furthermore take $X_2 = X_1\times_X
  X_1$. Take $Z^k_i = Z^k\times_X X_i$ for $i=1$, $2$ and $k=1$, $2$,
  $3$, $4$. By the case of schemes already considered, we know 
  for $i=1$ and $2$ that the analytic space $\an{(Z^4_i)}$ is the
  colimit of the diagram $[\an{(Z^1_i)} \leftarrow \an{(Z^3_i)}
  \rightarrow \an{(Z^2_i)}]$ in the category of analytic spaces. The
  universal properties furnish us with an \'etale equivalence relation
  of analytic spaces $[\an{(Z^4_2)}\rightrightarrows
  \an{(Z^4_1)}]$, with quotient in the category of analytic spaces
  $\an{Z}^4$. The universal property of the pushouts and \'etale
  descent immediately imply that $\an{Z}^4$ is the colimit in the
  category of analytic spaces. In the case that $X$ is an algebraic
  stack, we may argue exactly the same as in the case where $X$ is an
  algebraic space (but instead use smooth covers), and conclude that
  $\an{Z}^4$ is the colimit in the category of locally quasi-finite
  and separated analytic stacks over $\an{X}$. 
\end{proof}
From here, we may prove a Lemma that is necessary for Proposition
\ref{prop:fet_dev}.
\begin{lem}\label{lem:fet_dev_pushout}
  Fix a finite type Deligne-Mumford $\C$-stack $X$ with quasi-compact
  and separated diagonal. Let $\pi : X^1 \to X$ be a finite and surjective
  morphism and consider an open substack $U \subset X$ such that the
  induced morphism $\pi^{-1}U \to U$ is \'etale.  Fix a coherent
  $\Orb_X$-ideal  $\mathscr{I}$ such that $|X\setminus U| =
  |V(\mathscr{I})|$. Consider a locally quasi-finite and separated
  morphism of analytic stacks $\sigma : \AN{Z} \to \an{X}$, where
  $\AN{Z}$ is proper. Let $X^2 := X^1\times_X X^1$ and for $i=1$ and
  $2$ define $\AN{Z}^i := \AN{Z}\times_{\an{X}} \an{X}^i$. There is an
  induced 
  coequalizer diagram $[\AN{Z}^2 \rightrightarrows \AN{Z}^1]$, with
  the maps appearing finite. Suppose that this diagram has a coequalizer
  $\AN{W}$ in the category of locally quasi-finite and separated analytic
  $\an{X}$-stacks (e.g. if $\AN{Z}^1$ and $\AN{Z}^2$ are
  algebraizable). There is an induced finite and surjective morphism 
  $\eta : \AN{W} \to \AN{Z}$ of analytic stacks. Then there is a $k>0$
  such that the kernel and cokernel of the map $\eta^\sharp$ is annihilated by
  the coherent $\Orb_{\AN{Z}}$-ideal $(\an{\mathscr{I}})^k_{\AN{Z}}$.  
\end{lem}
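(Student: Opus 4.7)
The plan is to show that the finite surjective morphism $\eta:\AN{W} \to \AN{Z}$ restricts to an isomorphism over the open analytic substack $\AN{U}' := \sigma^{-1}(\an{U}) \subset \AN{Z}$. Once this is done, $\eta^\sharp:\Orb_{\AN{Z}} \to \eta_*\Orb_{\AN{W}}$ is an isomorphism over $\AN{U}'$, so its kernel and cokernel, which are coherent $\Orb_{\AN{Z}}$-modules (since $\eta$ is finite, $\eta_*\Orb_{\AN{W}}$ is $\Orb_{\AN{Z}}$-coherent), are supported on the closed analytic set $|V((\an{\mathscr{I}})_{\AN{Z}})|$. A direct appeal to Lemma \ref{lem:AR_analy}\itemref{lem:AR_analy1}, which applies because $\AN{Z}$ is compact, then produces the desired $k>0$ (taking the maximum of the two exponents obtained for the kernel and the cokernel).

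To see that $\eta|_{\AN{U}'}$ is an isomorphism, I would first use that $\pi^{-1}U \to U$ is finite and \'etale to conclude that the base-changed morphism $\AN{Z}^1|_{\AN{U}'} \to \AN{U}'$ is a finite \'etale surjection of analytic stacks, and that the identification $\AN{Z}^2 \cong \AN{Z}^1 \times_{\AN{Z}} \AN{Z}^1$ arising from $X^2 = X^1 \times_X X^1$ shows $\AN{Z}^2|_{\AN{U}'}$ to be the two-fold fibre product $\AN{Z}^1|_{\AN{U}'} \times_{\AN{U}'} \AN{Z}^1|_{\AN{U}'}$. Thus $[\AN{Z}^2|_{\AN{U}'} \rightrightarrows \AN{Z}^1|_{\AN{U}'}]$ is a finite \'etale groupoid presentation of $\AN{U}'$, so effective \'etale descent presents $\AN{U}'$ as the coequalizer of this restricted diagram in the 2-category of analytic stacks.

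Next, I would combine the universal property of $\AN{W}$ in the category of locally quasi-finite and separated analytic $\an{X}$-stacks with that of $\AN{U}'$ as the descent quotient: both $\AN{W}|_{\AN{U}'}$ and $\AN{U}'$ receive compatible maps from $\AN{Z}^1|_{\AN{U}'}$ that coequalize the two arrows from $\AN{Z}^2|_{\AN{U}'}$, and faithfully flat descent along the finite \'etale cover $\AN{Z}^1|_{\AN{U}'} \to \AN{U}'$ forces mutually inverse maps between them. Hence $\eta|_{\AN{U}'}$ is an isomorphism, as required.

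The main obstacle I anticipate is the careful bookkeeping around the two different categorical contexts in which the coequalizers are being formed: the category of locally quasi-finite and separated analytic $\an{X}$-stacks, in which $\AN{W}$ lives by hypothesis, versus the 2-category of all analytic stacks, in which the \'etale descent identification $\AN{U}'$ naturally lives. Verifying that these can be compared as claimed is essentially a flat-base-change argument along the open immersion $\AN{U}' \hookrightarrow \AN{Z}$, after which the remainder of the proof is a formal invocation of the analytic Nullstellensatz-style Lemma \ref{lem:AR_analy}\itemref{lem:AR_analy1}.
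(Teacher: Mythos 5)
Your proposal is correct and follows essentially the same route as the paper's proof: restrict to $\AN{U}'=\sigma^{-1}(\an{U})$, use that $[\AN{Z}^2|_{\AN{U}'}\rightrightarrows \AN{Z}^1|_{\AN{U}'}]$ is the \v{C}ech groupoid of a finite \'etale cover so that \'etale descent identifies $\eta^{-1}\AN{U}'\to\AN{U}'$ as an isomorphism, and then apply Lemma \ref{lem:AR_analy}\itemref{lem:AR_analy1} to the kernel and cokernel of $\eta^\sharp$, which are consequently supported on $|V((\an{\mathscr{I}})_{\AN{Z}})|$. The compatibility of the coequalizer with restriction to $\AN{U}'$ that you flag is indeed the only point needing care, and it is supplied by the flat-base-change statement built into Lemma \ref{lem:pushouts_analy}.
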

\begin{proof}
  Consider the open analytic substack $\AN{U}':=\sigma^{-1}(\an{U})$
  of $\AN{Z}$. Let $g_i : \AN{Z}^i \to \AN{Z}$ be the induced maps,
  then the induced map $g_1^{-1}\AN{U}' \to \AN{U}'$ is finite
  \'etale. Hence, we deduce by \'etale descent that the map
  $\eta^{-1}\AN{U}' \to \AN{U}$ is an analytic isomorphism. In
  particular, we deduce that $|\supp \ker \eta^\sharp|$ and $|\supp
  \coker \eta^\sharp|$ are contained in
  $|V((\an{I})_{\AN{Z}})|$. Applying Lemma
  \ref{lem:AR_analy}\itemref{lem:AR_analy1}, we deduce the result. 
\end{proof}
\begin{proof}[Proof of Propostion \ref{prop:fet_dev}]
  As in Proposition \ref{prop:surj_cpoints_dev_case}, we prove the
  result by noetherian induction on the closed substacks of
  $X$. Hence, it suffices to assume that the functor $\Psi_{V}$ is an
  equivalence for any closed substack $V \hookrightarrow X$ such that
  $|V| \subsetneq |X|$.  By assumption, there is a finite and
  generically \'etale map $\pi:X^1 \to X$ such that the functors
  $\Psi_{X^1}$ and $\Psi_{X^1\times_X X^1}$ are equivalences. Fix a
  dense open substack $U \hookrightarrow X$ such that $\pi^{-1}U \to
  U$ is \'etale and let $\mathscr{I}$ be a coherent $\Orb_X$-ideal
  such that $|X\setminus U| = |V(\mathscr{I})|$. Let
  $\AN{Z} \to \an{X}$ be a locally quasi-finite and separated morphism
  with $\AN{Z}$ proper. Let $X^2 = X^1\times_X X^1$ and for $i=1$ and
  $2$ we set $\AN{Z}^i = \AN{Z}\times_{\an{X}} \an{X}^i$. By the 
  hypotheses on $\Psi_{X^1}$ and $\Psi_{X^2}$, the diagram $[\AN{Z}^2
  \rightrightarrows \AN{Z}^1]$ is algebraizable. Hence, by Lemma
  \ref{lem:pushouts_analy}, we conclude that the coequalizer in the
  category of locally quasi-finite and separated $\an{X}$-stacks
  exists, and is algebraizable. We denote this coequalizer by
  $\AN{W}$. By Lemma \ref{lem:fet_dev_pushout}, we deduce that the
  induced map $\eta : \AN{W} \to \AN{Z}$ has the property that $\ker
  \eta^\sharp$ and $\coker \eta^\sharp$ are both annihilated by
  $(\an{\mathscr{I}})^k_{\AN{Z}}$ for some $k>0$. By Lemma 
  \ref{lem:inj_surj_analy}\itemref{lem:inj_surj_analy3}, we deduce
  that the morphism $\AN{Z}\to \an{X}$ is algebraizable to a
  quasi-finite, separated, and representable morphism $Z \to X$, with
  $Z$ $\C$-proper. 

  Thus, given $(\AN{Z},\AN{F}) \in \pscSHV{\an{X}}$, by what we have 
  proven, we know that the locally quasi-finite and separated morphism
  $\AN{Z} \to \an{X}$ is algebraizable. By GAGA for
  proper $\C$-stacks (cf. Theorem \ref{thm:clGAGA}), we deduce that
  the functor $\Psi_X$ is essentially surjective.
\end{proof}
Let $f : \AN{Z}' \to \AN{Z}$ be a finite morphism of analytic
stacks. If $\mathscr{I} \ideal \Orb_{\AN{Z}}$ is a coherent ideal such
that $\mathscr{I} \subset \fml{C}_f$, then the image of $\mathscr{I}$
in $f_*\Orb_{\AN{Z}'}$ generates a $f_*\Orb_{\AN{Z}}$-ideal
$\widetilde{\mathscr{I}} \subset \fml{C}_f$, which lies in the image
of $\Orb_{\AN{Z}}$ (this is a general property of conductors). If
$f^\sharp$ is an injective map, then it is easily verified that
$\widetilde{\mathscr{I}}$ is a coherent $\Orb_{\AN{Z}}$-ideal and
$(\widetilde{\mathscr{I}})_{\AN{Z}'} = \mathscr{I}_{\AN{Z}'}$ as
$\Orb_{\AN{Z}'}$-ideals.  We will
use this notation for the remainder of this section. 
\begin{lem}\label{lem:conductor_anal}
  Let $\AN{X}$ be an analytic stack. Consider a finite morphism of
  locally quasi-finite analytic $\AN{X}$-stacks $f : \AN{Z}' \to 
  \AN{Z}$, such that $f^\sharp : \Orb_{\AN{Z}} \to f_*\Orb_{\AN{Z}'}$
  is injective. Fix an ideal $\mathscr{I}\ideal \Orb_{\AN{Z}}$ such
  that $\mathscr{I} \subset \fml{C}_f$. Then
  the diagram of analytic $\AN{X}$-stacks: 
  \[
  \xymatrix{V(\mathscr{I}_{\AN{Z}'}) \ar@{^(->}[r]
    \ar[d] & \ar[d]^f \AN{Z}' \\ V(\widetilde{\mathscr{I}})
    \ar@{^(->}[r] & \AN{Z}}  
  \]
  is cocartesian in the category of locally quasi-finite analytic
  $\AN{X}$-stacks. 
\end{lem}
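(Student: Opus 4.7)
The proof will parallel the construction of algebraic pushouts in \cite[Thm.\ 2.10]{JACKRYDHHILBSTKSCH-2010}, transported to the analytic category. The plan is to verify the pushout property by (i) computing the underlying topological space, (ii) verifying the fibered-product description of the structure sheaf, and then (iii) descending to analytic stacks.

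First I would reduce the statement to an étale-local question. The diagram is stable under flat base change on $\AN{X}$, and the assertion of being a pushout is smooth-local on $\AN{X}$ (for analytic stacks), so I may assume $\AN{X}$ is an analytic space. Similarly, after passing to a smooth cover, I may assume $\AN{Z}$ and $\AN{Z}'$ are locally given by analytic spectra of finite $\Orb_{\AN{X}}$-algebras; specifically, in the local picture we have a finite injection of rings $A \hookrightarrow B$ and an ideal $I \subset A$ lying in the conductor ideal $(A \colon B) = \{a \in A : aB \subset A\}$, so that $\widetilde{I} = IB \subset A$.

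Next I would verify two things. On the topological side, the hypothesis $\mathscr{I} \subset \fml{C}_f$ forces $f_*\Orb_{\AN{Z}'}/\Orb_{\AN{Z}} = 0$ wherever $\widetilde{\mathscr{I}} = \Orb_{\AN{Z}}$, which, because $f$ is finite, forces $f$ to be an analytic isomorphism over $\AN{Z}\setminus V(\widetilde{\mathscr{I}})$; since moreover $V(\widetilde{\mathscr{I}})_{\AN{Z}'} = V(\mathscr{I}_{\AN{Z}'})$ and $f$ is surjective (from injectivity of $f^\sharp$), the square of underlying topological spaces is a pushout. On the sheaf side, the local algebraic identity $A = (A/IB)\times_{B/IB} B$ (an elementary verification using only $IB \subset A$) globalizes at once to the isomorphism
\[
\Orb_{\AN{Z}} \xrightarrow{\sim}
(\Orb_{\AN{Z}}/\widetilde{\mathscr{I}})
\times_{f_*\Orb_{\AN{Z}'}/\mathscr{I}_{\AN{Z}'}}
f_*\Orb_{\AN{Z}'}.
\]
Combining these two computations exhibits $\AN{Z}$ as the pushout of the diagram in the category of locally ringed spaces.

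Finally I would promote this to a pushout in the category of locally quasi-finite analytic $\AN{X}$-stacks. Since $V(\widetilde{\mathscr{I}}) \hookrightarrow \AN{Z}$ is a closed immersion and $f$ is finite, any cocone $V(\widetilde{\mathscr{I}}) \to \AN{W}$ and $\AN{Z}' \to \AN{W}$ agreeing on $V(\mathscr{I}_{\AN{Z}'})$ assembles uniquely into a morphism of locally ringed spaces out of the pushout computed above, and the local model shows this morphism is in fact a morphism of analytic $\AN{X}$-stacks; 2-isomorphisms are controlled by the same fibered-product description after replacing $\AN{W}$ by its inertia. The main obstacle I anticipate is the last step, namely checking that the locally-ringed-space pushout deserves to be the pushout in the larger 2-category; this is handled exactly as in the algebraic analogue of \cite[Thm.\ 2.10]{JACKRYDHHILBSTKSCH-2010} (and as in the proof of Lemma \ref{lem:pushouts_analy}) by descending along a smooth cover, so that the statement reduces to the analytic-space case, where the pushout in ringed spaces coincides with the pushout in analytic spaces because the canonical map from the ringed-space pushout is locally a finite analytic morphism.
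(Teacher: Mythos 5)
Your proposal is correct and follows essentially the same route as the paper: reduce to the case where $\AN{X}$ is an analytic space, verify that the conductor square is cocartesian in (locally) ringed spaces by checking the underlying topological space (using that $f$ is finite surjective, hence submersive, and an isomorphism away from $V(\widetilde{\mathscr{I}})$) together with the cartesian conductor square for the structure sheaves, and then descend the stack case along smooth covers. The only cosmetic difference is that the paper invokes Ferrand's pinching criterion to package the ringed-space verification, whereas you check the topological and sheaf-theoretic halves by hand.
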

\begin{proof}
First, we assume that $\AN{X}$ is an analytic space, and we will show
that the diagram:
\[
 \xymatrix{V(\mathscr{I}_{\AN{Z}'}) \ar@{^(->}[r]
    \ar[d] & \ar[d]^f \AN{Z}' \\ V(\widetilde{\mathscr{I}})
    \ar@{^(->}[r] & \AN{Z}}  
\]
is cocartesian in the category of locally ringed spaces, thus in the
category of analytic spaces. This implies that the diagram is
cocartesian in the category of analytic $\AN{X}$-spaces. We will use
the criterion of \cite[Sc.\ 4.3(b)]{MR2044495}. Note that from the
associated cartesian 
conductor square for \emph{rings}, it suffices to show that
$\AN{Z}$ has the correct topological space. Since $f^\sharp$ is
injective and $f$ is finite, then $f$ is surjective and closed, thus
submersive. Let $\AN{U} = \AN{Z} -
V(\widetilde{\mathscr{I}})$ and $\AN{U}'  =
\AN{Z}'-V(\mathscr{I}_{\AN{Z}'})$. It remains to show that $f$
induces a bijection of sets 
$\AN{U}' \to \AN{U}$. Since $\widetilde{\mathscr{I}} \subset
\fml{C}_f$, then for $u \in \AN{U}$ we have
that the map $f^\sharp_u : \Orb_{\AN{Z},u} \to (f_*\Orb_{\AN{Z'}})_u$ is a
bijection. Thus, since $f$ is finite, we may conclude that the induced
surjective morphism $f^{-1}(\AN{U}) \to
\AN{U}$ has connected fibers---thus is a
bijection of sets.  Hence, we are reduced to showing that the
inclusion $\AN{U}' \hookrightarrow f^{-1}(\AN{U})$ is surjective. This
follows from $(\AN{Z}-V(\widetilde{\mathscr{I}})) \cap
f(V(\mathscr{I}_{\AN{Z}'})) = \emptyset$, which is obvious. 

In the case where $\AN{X}$ is an analytic stack, since all of these
constructions commute with smooth base change on $\AN{Z}$, we 
may work smooth locally on $\AN{X}$ and deduce the result from the
case of analytic spaces already proved. 
\end{proof}
Finally, we arrive at the proofs of Lemma \ref{lem:inj_surj_analy} and
Proposition \ref{prop:etGAGA}.
\begin{proof}[Proof of Lemma \ref{lem:inj_surj_analy}]
  For \itemref{lem:inj_surj_analy2}, we consider the diagram of
  analytic $\an{X}$-stacks $[V(\mathscr{I}) \leftarrow
  V(\mathscr{I}_{\AN{Z}'}) 
  \rightarrow \AN{Z}']$. By hypothesis, these analytic stacks are all
  algebraizable to quasi-finite and separated algebraic
  $X$-stacks and so an application of Lemma \ref{lem:pushouts_analy}
  produces a factorization of the map $f : \AN{Z}' \to \AN{Z}$ into
  $\AN{Z} \to \AN{Z}'' \xrightarrow{\beta} \AN{Z}$, where $\AN{Z}''$
  is algebraizable to a quasi-finite and separated algebraic
  $X$-stack. An easy calculation with rings verifies that $\beta$ has
  the desired properties. 

  For \itemref{lem:inj_surj_analy3}, by Lemma
  \ref{lem:AR_analy}\itemref{lem:AR_analy2} we may replace
  $\mathscr{J}$ by some power $\mathscr{J}^k$ such that
  $(\an{\mathscr{J}})^k_{\AN{Z}} \cap \ker f^\sharp = (0)$.  We now
  observe that \itemref{lem:inj_surj_analy2} applies, and we are
  reduced to the case where $f^\sharp$ is injective. Since there is a
  $l$ such that $\fml{C}_f \supset (\an{\mathscr{J}})^l_{\AN{Z}}$,
  then $V((\an{\widetilde{\mathscr{J}}})^l_{\AN{Z}})$ is algebraizable
  to a quasi-finite and separated algebraic $X$-stack. Moreover,
  $V((\an{\mathscr{J}})^l_{\AN{Z}'})$ is algebraizable to a quasi-finite and
  separated algebraic $X$-stack. Since the following commutative diagram
  \[
  \xymatrix{V((\an{\mathscr{J}})^l_{\AN{Z}'}) \ar[d] \ar[r] &
    \AN{Z}'\ar[d]\\V((\an{\widetilde{\mathscr{J}}})^l_{\AN{Z}})
    \ar[r] & \AN{Z} }
  \]
  is cocartesian in the category of locally quasi-finite analytic
  $\an{X}$-stacks (by Lemma \ref{lem:conductor_anal}), then an
  application of Lemma \ref{lem:pushouts_analy} implies that $\AN{Z}$
  is algebraizable to a quasi-finite and separated algebraic $X$-stack.
\end{proof}
\begin{proof}[Proof of Proposition \ref{prop:etGAGA}] 
  The interesting point here is that we permit the map $W \to Z$ to be
  non-separated. On the 
  small \'etale site of $Z$, we define the sheaf $H_{W/Z} :=
  \Hom_Z(-,W)$. By \cite[Exp. IX, 2.7.1]{SGA4}, since $W \to Z$ is
  quasi-compact and \'etale, the sheaf $H_{W/Z}$ is
  constructible. The analytification of the \'etale sheaf $H_{W/Z}$
  is the sheaf $H_{\an{W}/\an{Z}} := \Hom_{\an{Z}}(-,\an{W})$ on the
  analytic small \'etale site of $\an{Z}$. We now have the comparison
  map on global sections 
  \[
  \Hom_Z(Z,W) = \Gamma(Z,H_{W/Z}) \to \Gamma(\an{Z},
  H_{\an{W}/\an{W}}) = \Hom_{\an{Z}}(\an{Z},\an{W}).
  \]
  which we must show is a bijection. More generally, it suffices to
  show that if $G$ is a constructible sheaf of \emph{sets} on $Z$, then the
  comparison map
  \[
  \delta_G : \Gamma(Z,G) \to \Gamma(\an{Z},\an{G})
  \]
  is bijective. This may be checked \'etale locally on $Z$, so we may
  assume that $Z$ is separated and of finite type over $\C$. The
  constructibility of $G$ guarantees that there is an inclusion $G
  \hookrightarrow G'$, where $G'$ is a constructible sheaf of abelian
  groups on $Z$. By M. Artin's general result on the comparison between
  \'etale and complex cohomology for constructible sheaves of abelian
  groups, the comparison map $\delta_{G'}$ is bijective. One now
  deduces that the comparison map $\delta_G$ is bijective.
\end{proof}
\section{Applications}\label{sec:applications_GAGA}
In this section, we will make Theorem \ref{thm:mod_applications}
precise. Fix an integer $n\geq 0$. Let $\curv_n$ denote the moduli
stack of all $n$-pointed $\C$-curves. That 
is, a morphism from a $\C$-scheme $T$ to $\curv_n$ is equivalent to a
proper, flat, and finitely presented morphism of algebraic $\C$-spaces $C
\to T$ with one-dimensional geometric fibers, together with $n$
sections to the map $C \to T$. It was shown, in
\cite{JACK-2009}, which is an appendix to \cite{smyth-2009}, that the
stack $\curv_n$ is algebraic, locally of finite presentation over $\C$,
with quasi-compact and separated diagonal.
\begin{defn}
  Fix $g>1$, and let ${M}_{g,n}$ denote the moduli stack of smooth
  curves of genus $g$. An \textbf{algebraic modular
    compactification} of ${M}_{g,n}$ is a proper Deligne-Mumford
  $\C$-stack $N$, fitting into a $2$-fiber diagram of algebraic
  $\C$-stacks: 
  \[ 
  \xymatrix{V \ar[r]^{j'} \ar[d]_{i'} & M_{g,n} \ar[d]^i \\ N \ar[r]_j & U_n}, 
  \]
  where the map $j$ is an open immersion, and the maps $i'$ and $j'$
  have dense image. 
\end{defn}
On the analytic side, let $\AN{\curv}_n$ denote the $\ANABS$-stack
of all analytic curves. That is, a map from an analytic space
$\AN{T}$   to $\AN{\curv}_n$ is equivalent to a proper and flat morphism
of analytic spaces $\AN{C} \to \AN{T}$ with one-dimensional
fibers, together with $n$ sections to the map $\AN{C} \to \AN{T}$. 
\begin{defn}
  Fix $g>1$, and let $\AN{M}_{g,n}$ denote the analytic moduli stack
  of smooth, $n$-pointed curves of genus $g$. An \textbf{analytic
    modular compactification} of 
  $\AN{M}_{g,n}$ is a proper Deligne-Mumford analytic stack $\AN{N}$,
  fitting into a $2$-fiber diagram of $\ANABS$-stacks:
 \[ 
  \xymatrix{\AN{V} \ar[r]^{\jmath'} \ar[d]_{\imath'} & \AN{M}_{g,n}
    \ar[d]^{\imath} \\ \AN{N} \ar[r]_{\jmath} & \AN{U}_n},  
  \]
  where the map $\jmath$ is an open immersion, and the maps
  $\imath'$ and $\jmath'$ are dense open immersions. 
\end{defn}
 Clearly, there is a natural morphism of $\ANABS$-stacks:
 $\an{(\curv_n)} \to \AN{U}_n$. We may no claim of originality for the
 following result, but we were unable to find a precise reference.
\begin{thm}\label{thm:anal_curves}
  The morphism of $\ANABS$-stacks $\an{(\curv_n)} \to
  \AN{U}_n$ is an equivalence; hence, $\AN{U}_n$ is an analytic
  stack. Moreover, this equivalence sends $\an{(M_{g,n})}$ to
  $\AN{M}_{g,n}$.   
\end{thm}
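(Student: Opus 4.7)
The plan is to invoke Corollary \ref{cor:conrad_crit} applied to the natural analytification morphism $P : \an{(\curv_n)} \to \AN{U}_n$ over $\AN{Y} := \an{(\spec \C)}$. Condition \itemref{item:analF} is immediate: since $\curv_n$ is a locally of finite type algebraic $\C$-stack with quasi-compact and separated diagonal (by \cite{JACK-2009}), its analytification $\an{(\curv_n)}$ is an analytic stack. For condition \itemref{item:diagG}, given $n$-pointed proper flat analytic families $(\AN{C}_i,\vec{\sigma}_i)\to \AN{T}$ for $i=1,2$, the isomorphism sheaf is a subsheaf of $\Hom_{\AN{T}}(\AN{C}_1,\AN{C}_2)$. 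Both $\AN{C}_i\to \AN{T}$ are proper (hence separated) and flat, so by Bingener's theorem recalled in \S\ref{sec:sepGAGA}, $\Hom_{\AN{T}}(\AN{C}_1,\AN{C}_2)$ is a separated analytic $\AN{T}$-space. Being an isomorphism of proper flat families is an open condition (checkable on fibers), and respecting the $n$ sections is a closed condition, so the $\Isom$-sheaf is representable by an analytic $\AN{T}$-space, verifying \itemref{item:diagG}.

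The substance lies in condition \itemref{item:bijART}: for every local artinian $\C$-scheme $S$, the functor $\an{(\curv_n)}(\an{S}) \to \AN{U}_n(\an{S})$ is an equivalence of groupoids. By Lemma \ref{lem:EXTRAUSEFUL1} applied to the locally separated diagonal of $\curv_n$, the left side is naturally equivalent to $\curv_n(S)$, so I must show the analytification functor $\curv_n(S) \to \AN{U}_n(\an{S})$ is an equivalence. Full faithfulness reduces to GAGA for proper $\C$-schemes: an $S$-morphism between two $n$-pointed proper flat families $(C_i,\vec{\sigma}_i)\to S$ is determined by its graph, a closed subspace of the proper $S$-scheme $C_1\times_S C_2$, and Theorem \ref{thm:clGAGA} identifies algebraic closed subspaces with analytic ones; the sections (being closed immersions of $S$ into $C_i$) are handled similarly.

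The main obstacle is essential surjectivity at artinian points: given a proper flat analytic family $(\AN{C},\vec{\sigma})\to \an{S}$, I must algebraize it. First I handle the closed fiber $\AN{C}_0 \to \spec\C$, a proper one-dimensional analytic $\C$-space: its normalization is a finite disjoint union of compact Riemann surfaces, each projective by classical theory (Kodaira embedding or Chow), so $(\AN{C}_0)_{\mathrm{red}}$ descends to an algebraic curve, and the nilpotent structure on $\AN{C}_0$ is encoded by a coherent analytic $\Orb_{(\AN{C}_0)_{\mathrm{red}}}$-algebra, which by GAGA on the proper $\C$-scheme $(\AN{C}_0)_{\mathrm{red}}$ (Theorem \ref{thm:clGAGA}) algebraizes uniquely; hence $\AN{C}_0\cong \an{C_0}$ for a proper algebraic $\C$-curve $C_0$. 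Next, since $C_0$ is a proper $\C$-scheme, the equivalence of categories of coherent sheaves on $C_0$ and $\an{C_0}$ provided by Theorem \ref{thm:clGAGA} matches the categories of flat algebraic deformations of $C_0$ over $S$ with flat analytic deformations of $\an{C_0}$ over $\an{S}$: by induction on the length of $\mathfrak{m}_S$, the extension classes classifying successive thickenings lie in $\Ext$-groups of coherent sheaves on $C_0$, which agree in the algebraic and analytic categories. Finally, the analytic sections $\vec{\sigma}$ algebraize because they correspond to closed analytic subspaces of $\AN{C}$ mapping isomorphically to $\an{S}$, and these are algebraic by GAGA for the (now algebraic) proper $\C$-scheme $C$.

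For the final assertion, $\an{(M_{g,n})}\subset \an{(\curv_n)}$ and $\AN{M}_{g,n}\subset \AN{U}_n$ are defined by the same open conditions---smoothness of the structure morphism and arithmetic genus $g$ of the fibers---which are preserved and reflected by analytification of families. Consequently, the equivalence $\an{(\curv_n)}\cong \AN{U}_n$ restricts to an equivalence $\an{(M_{g,n})}\cong \AN{M}_{g,n}$, as claimed.
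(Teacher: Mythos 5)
Your proposal follows the same skeleton as the paper: Corollary \ref{cor:conrad_crit}, representability of $\Delta_{\AN{U}_n}$ via Bingener's $\Hom$-spaces, reduction to artinian points via Lemma \ref{lem:EXTRAUSEFUL1}, and GAGA for full faithfulness. The one genuine divergence is essential surjectivity at a local artinian $S$. You algebraize the closed fiber $\AN{C}_0$ and then climb the thickenings of $S$ by matching algebraic and analytic deformation classes in $\Ext$-groups. This works in principle, but as written it is the weakest step: for a possibly singular, non-reduced curve the successive thickenings are governed by the cotangent complex (not merely $\Ext$ of coherent sheaves), and you need an analytic deformation--obstruction theory whose torsor structure is identified with the algebraic one under the GAGA comparison --- real foundational input from the analytic side that you leave implicit. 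The paper avoids all of this with a shortcut: since $\an{S}$ is a single point with nilpotents, the \emph{total space} $\AN{C}$ is itself a one-dimensional proper analytic space, hence algebraizable outright to a proper $\C$-scheme $C$ (essentially your normalization/Riemann-surface argument applied once, to $\AN{C}$ rather than to $\AN{C}_0$); the structure map $\AN{C}\to\an{S}$ is then just a $\C$-algebra homomorphism $\Gamma(S,\Orb_S)\to\Gamma(\AN{C},\Orb_{\AN{C}})\cong\Gamma(C,\Orb_C)$, recovered by GAGA, and the sections algebraize likewise. If you adopt that observation, your inductive deformation step becomes unnecessary and the proof closes cleanly; otherwise you should supply the analytic deformation-theoretic foundations you are invoking.
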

\begin{proof}
  The latter claim follows from the first, since smoothness of a
  proper algebraic $\C$-space can be tracked by its
  analytification. We prove the 
  first claim using
  the criteria of Corollary 
  \ref{cor:conrad_crit}. By
  \cite{MR597743}, we know that the diagonal map 
  $\Delta_{\AN{\curv}_0}$ is representable by analytic spaces. Since
  the forgetful map $\AN{U}_n \to \AN{U}_0$ is representable by
  analytic spaces for any $n\geq 0$, we deduce that the diagonal map
  $\Delta_{\AN{\curv}_n}$ is representable by analytic spaces for all
  $n\geq 0$. By Lemma \ref{lem:EXTRAUSEFUL1}, it remains to verify
  that the functor 
  $\curv_n(S) \to \AN{U}_n(\an{S})$ is an equivalence for any local
  artinian $\C$-scheme $S$. Note that the GAGA results for proper
  $\C$-schemes (cf. \cite[Exp. XII, Cor. 4.5]{SGA1}) show that this
  functor is fully 
  faithful. To get essential surjectivity, we note that for a proper and
  flat analytic curve $\AN{C} \to \an{S}$, $\AN{C}$ is a
  one-dimensional proper analytic space, thus is algebraizable. The
  flat structure map $\AN{C} \to \an{S}$ also algebraizes using the
  classical GAGA results [\emph{loc.\ cit.}]\ . The classical GAGA results
  [\emph{loc.\ cit.}]\ also show that
  the sections may be algebraized, which gives the desired equivalence.
 \end{proof}
\begin{proof}[Proof of Theorem \ref{thm:mod_applications}]
  Let $\jmath : \AN{N} \hookrightarrow \AN{\curv}_n$ be an analytic
  modular compactification of $\AN{M}_{g,n}$.  Theorems
  \ref{thm:anal_curves} and \ref{thm:GAGA_stacks_VAR1}, imply that
  the open immersion of analytic stacks $\jmath : \AN{N}
  \hookrightarrow \AN{\curv}_n$ is the algebraizable to an open
  immersion $j : N \hookrightarrow 
  \curv_n$, where  $N$ is a proper Deligne-Mumford $\C$-stack. Next,
  form the 2-fiber square: 
  \[
  \xymatrix{V \ar[r]^{j'} \ar[d]_{i'}& M_{g,n} \ar[d]^i \\ N \ar[r]_j & U_n }.
  \]
  Clearly, all maps in the above diagram are open immersions, and it
  remains to show that $i'$ and $j'$ have dense image. This may be
  checked after passing to the analytifications, and since
  analytification commutes with $2$-fiber products, we're done.
\end{proof}
\appendix
\section{Separated GAGA}\label{app:clGAGA}
For this paper we required a mild strengthening of the classical GAGA
results contained in \cite[Exp. XII]{SGA1}, for which we could not
find a reference for in the existing literature. For a separated and locally
of finite type algebraic $\C$-stack $X$, define $\COHP{X}$ to be the
category of coherent sheaves of $\Orb_X$-modules with proper
support. For a separated analytic stack $\AN{X}$, define
$\COHP{\AN{X}}$ to be the category of coherent sheaves of
$\Orb_{\AN{X}}$-modules with proper support. If $X$ is a separated and
locally of finite type algebraic $\C$-stack, there is an
analytification functor
\[
A_X : \COHP{X} \longrightarrow \COHP{\an{X}}.
\]
If the algebraic stack $X$ is a projective $\C$-scheme, then
the functor $A_X$ was shown to be an equivalence by Serre
\cite{MR0082175}. If the algebraic stack $X$ is assumed to be a proper
$\C$-scheme, then the functor $A_X$ was shown to be an equivalence by
Grothendieck in \cite[Exp. XII]{SGA1}. We were unable to find a
reference proving that the analytification functor $A_X$ is an
equivalence in the case that the algebraic $\C$-stack $X$ is a
quasi-projective $\C$-scheme. For the purposes of this paper, we need
that the functor $A_X$ is an equivalence in the case of a separated
Deligne-Mumford $\C$-stack $X$. Utilizing the work of M. Olsson 
\cite{MR2183251}, however, makes it of equal difficuly to prove the
following    
\begin{thm}[Separated GAGA]\label{thm:clGAGA}
  Let $X$ be a separated and locally of finite type algebraic
  $\C$-stack. Then the analytification functor
  \[
  A_X : \COHP{X} \longrightarrow \COHP{\an{X}}
  \]
  induces an equivalence of categories.
\end{thm}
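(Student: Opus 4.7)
My plan is a multi-step dévissage, ultimately reducing to Grothendieck's classical GAGA for proper $\C$-schemes (\cite[Exp.~XII, Thm.~4.4]{SGA1}). First, I would reduce to the case where $X$ is itself proper. For any $F\in\COHP{X}$, the scheme-theoretic support is a proper closed substack $i_F:Z_F\hookrightarrow X$, and $F\cong (i_F)_*F_0$ for a coherent sheaf $F_0$ on $Z_F$; the analogous statement holds on the analytic side, and scheme-theoretic support commutes with analytification for separated locally of finite type $\C$-stacks. So it suffices to show that the analytification functor $\COH{X}\to\COH{\an{X}}$ is an equivalence whenever $X$ is a proper algebraic $\C$-stack.

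Next, I would invoke Olsson's Chow lemma for proper Artin stacks \cite{MR2183251} to produce a projective surjective morphism $p:X'\to X$ with $X'$ a projective $\C$-scheme, which is an isomorphism over a dense open substack $U\subset X$. I would proceed by noetherian induction on $|X|$, assuming the result for every proper closed substack $V\hookrightarrow X$ with $|V|\subsetneq|X|$. Full faithfulness should fall out relatively cleanly: for $F,G\in\COH{X}$, the kernel and cokernel of the comparison maps $\Hom_X(F,G)\to\Hom_{X'}(p^*F,p^*G)$ and of its analytic counterpart are coherent sheaves whose supports lie in a proper closed substack of $X$ disjoint from $U$; combining classical GAGA on the projective scheme $X'$ with the inductive hypothesis applied to these supports yields the full faithfulness of $A_X$.

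For essential surjectivity, given $\AN{F}\in\COH{\an{X}}$, set $\AN{F}':=(\an{p})^*\AN{F}$. By Grothendieck's GAGA on $X'$, this is algebraizable to some $F'\in\COH{X'}$, and $p_*F'\in\COH{X}$ has analytification canonically isomorphic to $(\an{p})_*\AN{F}'$. My plan is then to mimic the conductor and Artin--Rees machinery of \S\ref{sec:nsGAGA}: the kernel and cokernel of the adjunction unit $\AN{F}\to (\an{p})_*(\an{p})^*\AN{F}$ are annihilated by a power of the ideal defining the complement of $U$ (via the analogue for sheaves of Lemmas \ref{lem:AR_analy} and \ref{lem:analy_noeth_ind}), and the inductive hypothesis algebraizes the restriction of $\AN{F}$ to a proper closed substack containing $|X\setminus U|$. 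A pushout/cocartesian-square argument modeled on Lemma \ref{lem:inj_surj_analy}\itemref{lem:inj_surj_analy3} then assembles these two pieces into a coherent algebraic $F\in\COH{X}$ with $\an{F}\cong\AN{F}$.

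The main obstacle is this final descent step: reconstructing $\AN{F}$ from its algebraic pullback to $X'$ together with the inductively algebraic data along $X\setminus U$. This is precisely the conductor-theoretic gluing developed earlier in the paper for pseudosheaves; once Olsson's Chow lemma supplies the requisite birational modification, I expect the arguments of \S\ref{sec:nsGAGA} to adapt with only cosmetic changes (one works with closed immersions in place of quasi-finite representable morphisms, which actually simplifies matters).
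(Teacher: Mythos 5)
Your overall architecture (reduce to proper supports, take a Chow-type cover, run noetherian induction, fall back on projective GAGA) is the right family of ideas, but the pivot of your induction is a morphism that does not exist. You ask Olsson's theorem \cite{MR2183251} for a projective surjection $p:X'\to X$ from a projective scheme \emph{that is an isomorphism over a dense open substack} $U\subset X$. Theorem \ref{thm:clGAGA} is stated for arbitrary separated algebraic $\C$-stacks, so $X$ may have positive-dimensional generic stabilizers (e.g.\ $X=BE$ for an elliptic curve $E$, a proper algebraic stack). A representable morphism from a scheme which is an isomorphism over a dense open would force $X$ to be generically an algebraic space, so no such $p$ exists; Olsson's result only furnishes a proper representable surjection $h:V\to X$ from a projective scheme, with no birationality. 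Consequently there is no dense open over which the unit $\AN{F}\to(\an{p})_*(\an{p})^*\AN{F}$ is an isomorphism, the kernel and cokernel need not be supported on a proper closed substack, and the noetherian induction never gets off the ground. The paper's substitute (in the proof of Proposition \ref{prop:clGAGA_mor}, and in the omitted ``general case'' of essential surjectivity) is generic \emph{flatness} plus fppf descent: one compares $F$ with $\mathrm{eq}\,(h_*h^*F\rightrightarrows h^2_*(h^2)^*F)$, which agrees with $F$ over the flat locus of $h$, and runs the d\'evissage on the kernel, image, and cokernel of that comparison map. That is the idea your proposal is missing; for Deligne--Mumford stacks your birational/generically finite picture could be salvaged, but not in the stated generality.

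Two smaller points. First, the gluing machinery of \S\ref{sec:nsGAGA} (Lemmas \ref{lem:pushouts_analy}, \ref{lem:conductor_anal}, \ref{lem:inj_surj_analy}) algebraizes \emph{spaces} via conductor pushouts of ringed spaces; for \emph{sheaves} the corresponding step is an extension problem, and algebraizing an extension of algebraizable sheaves requires comparing $\Ext^1$'s, i.e.\ the cohomological comparison of Proposition \ref{prop:clGAGA_mor} together with full faithfulness. That comparison is where the real work of the paper's proof lies, and adapting \S\ref{sec:nsGAGA} is not merely a ``cosmetic change.'' Second, your opening reduction to proper $X$ silently assumes that the compact analytic support $V(\Ann\AN{F})$ of a given analytic sheaf is of the form $\an{Z}$ for a proper closed substack $Z\subset X$; this algebraization of the support is itself a Chow-type statement, which the paper obtains by embedding into a projective compactification in the quasi-projective case before any d\'evissage is attempted.
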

This readily implies the following corollaries.
\begin{cor}\label{cor:clGAGA1}
  Let $X$ be a separated and locally of finite type algebraic
  $\C$-stack. Given a finite morphism of analytic stacks $\AN{Z} \to
  \an{X}$, where $\AN{Z}$ is proper, then $\AN{Z}$ is 
  uniquely algebraizable to a finite $X$-stack $Z \to X$.
\end{cor}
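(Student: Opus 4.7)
The plan is to identify the finite morphism with its coherent sheaf of algebras and apply Theorem \ref{thm:clGAGA}. Given a finite morphism $\pi : \AN{Z} \to \an{X}$ with $\AN{Z}$ proper, set $\AN{A} := \pi_*\Orb_{\AN{Z}}$; this is a coherent sheaf of commutative $\Orb_{\an{X}}$-algebras whose support $\pi(|\AN{Z}|)$ is a compact analytic subset of $|\an{X}|$, and $\AN{Z} \cong \underline{\spec}_{\an{X}}(\AN{A})$. Thus $\AN{A} \in \COHP{\an{X}}$, and it suffices to lift the full $\Orb_{\an{X}}$-algebra structure on $\AN{A}$ to an $\Orb_X$-algebra structure on some $A \in \COHP{X}$ with $\an{A} \cong \AN{A}$.

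First, Theorem \ref{thm:clGAGA} produces such an $A$, unique up to canonical isomorphism, with a specified analytification isomorphism. Next, I would transport the multiplication: since $\supp(\AN{A} \otimes_{\Orb_{\an{X}}} \AN{A}) \subset \supp(\AN{A})$ and analytification commutes with tensor products of coherent sheaves, the map $m_{\AN{A}} : \AN{A} \otimes_{\Orb_{\an{X}}} \AN{A} \to \AN{A}$ is a morphism in $\COHP{\an{X}}$, so by the full faithfulness of Theorem \ref{thm:clGAGA} it descends uniquely to an algebraic multiplication $m_A : A \otimes_{\Orb_X} A \to A$. Associativity and commutativity of $m_A$ hold by full faithfulness applied to the corresponding commutative squares in $\COHP{\an{X}}$.

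The main subtlety is the unit, because $\Orb_{\an{X}}$ itself does not lie in $\COHP{\an{X}}$. I would circumvent this via the scheme-theoretic support: the unit $u_{\AN{A}} : \Orb_{\an{X}} \to \AN{A}$ factors as
\[
\Orb_{\an{X}} \twoheadrightarrow \Orb_{\an{X}}/\Ann(\AN{A}) \hookrightarrow \AN{A},
\]
and since analytification is flat one has $\Ann(\AN{A}) = \an{(\Ann(A))}$, so $\Orb_{\an{X}}/\Ann(\AN{A}) \cong \an{(\Orb_X/\Ann(A))}$ lies in $\COHP{\an{X}}$. The inclusion therefore algebraizes uniquely to an injection $\Orb_X/\Ann(A) \hookrightarrow A$ in $\COHP{X}$, and composing with the canonical quotient $\Orb_X \twoheadrightarrow \Orb_X/\Ann(A)$ yields the desired algebraic unit $u_A$. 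Compatibility of $u_A$ with $m_A$ reduces similarly to equalities of morphisms in $\COHP{\an{X}}$ and so is automatic.

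Finally, set $Z := \underline{\spec}_X(A)$; this is a finite $X$-stack, and $\an{Z} \cong \underline{\spec}_{\an{X}}(\an{A}) \cong \AN{Z}$ over $\an{X}$. Uniqueness of $Z$ up to unique isomorphism follows from the uniqueness assertions of Theorem \ref{thm:clGAGA} applied to $A$, to $m_A$, and to the injection defining $u_A$.
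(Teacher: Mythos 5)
Your proof is correct and follows exactly the route the paper intends (the corollary is stated as an immediate consequence of Theorem \ref{thm:clGAGA}, with the details left to the reader): algebraize the coherent $\Orb_{\an{X}}$-algebra $\pi_*\Orb_{\AN{Z}}$, which has proper support, and transport the multiplication and unit by full faithfulness. Your treatment of the unit via the factorization through $\Orb_{\an{X}}/\Ann(\AN{A})$ correctly handles the one genuine subtlety, namely that $\Orb_{\an{X}}$ itself need not lie in $\COHP{\an{X}}$.
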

\begin{cor}\label{cor:clGAGA2}
  Let $X$ be a separated and locally of finite type Deligne-Mumford
  $\C$-stack. Suppose that $Z$ is a proper Deligne-Mumford $\C$-stack,
  then the analytification functor
  \[
  \Hom(Z,X) \longrightarrow \Hom(\an{Z},\an{X})
  \]
  is an equivalence of categories.
\end{cor}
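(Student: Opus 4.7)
The plan is to prove the corollary by passing to graphs and reducing to Corollary \ref{cor:clGAGA1} and Theorem \ref{thm:clGAGA} applied to the separated, locally of finite type DM $\C$-stack $Z \times X$. Since $X$ is separated, for any morphism $f : Z \to X$ the graph $\Gamma_f := (\mathrm{id}_Z, f) : Z \to Z \times X$ is a closed immersion (being a pullback of $\Delta_X$), and because $Z$ is proper it is in particular a finite morphism with proper source, satisfying $p_1 \circ \Gamma_f = \mathrm{id}_Z$. Conversely, any finite morphism $\iota : \Gamma \to Z \times X$ with $\Gamma$ a proper DM $\C$-stack, such that $\iota$ is a closed immersion and $p_1 \circ \iota$ is an isomorphism, yields a morphism $p_2 \circ \iota \circ (p_1 \circ \iota)^{-1} : Z \to X$. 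This identifies $\Hom(Z,X)$ with the groupoid of such closed substacks of $Z \times X$, and the analogous description holds for $\Hom(\an{Z}, \an{X})$ inside $\an{(Z\times X)} = \an{Z} \times \an{X}$.

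For essential surjectivity, given an analytic morphism $\phi : \an{Z} \to \an{X}$, the graph $\Gamma_\phi : \an{Z} \to \an{(Z\times X)}$ is a finite morphism with proper source. By Corollary \ref{cor:clGAGA1} applied to $Z \times X$, it uniquely algebraizes to a finite morphism $\iota : \Gamma \to Z \times X$ with $\Gamma$ proper. Theorem \ref{thm:clGAGA} applied to the coherent cokernel of $\Orb_{Z\times X} \to \iota_*\Orb_\Gamma$ shows that this algebraic map is also a closed immersion, since its analytification is. Similarly, $p_1 \circ \iota : \Gamma \to Z$ is a finite morphism of proper DM $\C$-stacks whose analytification is the identity on $\an{Z}$; applying Theorem \ref{thm:clGAGA} to the coherent sheaf $(p_1\circ\iota)_*\Orb_\Gamma$ on $Z$ shows $p_1\circ \iota$ is itself an isomorphism. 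The composite $f := p_2 \circ \iota \circ (p_1\circ\iota)^{-1}$ is then an algebraization of $\phi$.

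For full faithfulness, a $2$-isomorphism $\alpha : f \cong g$ of morphisms $Z \to X$ corresponds canonically to the $2$-isomorphism $(\mathrm{id}_Z, \alpha) : \Gamma_f \cong \Gamma_g$ of morphisms $Z \to Z \times X$, and conversely any $2$-isomorphism between the graphs that is the identity on the first component arises in this way. Under the graph correspondence, such $2$-isomorphisms are exactly isomorphisms between the two algebraizations regarded as finite $(Z\times X)$-stacks. The uniqueness clause of Corollary \ref{cor:clGAGA1} applied to $Z \times X$ thus supplies a natural bijection between algebraic and analytic such isomorphisms, proving full faithfulness.

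The main substance lies in verifying the transfer of properties (being a closed immersion, being an isomorphism on the first projection) between the algebraic and analytic settings; both reduce to applying Theorem \ref{thm:clGAGA} to explicit coherent sheaves with proper support on $Z \times X$ or on $Z$, and the minor $2$-categorical bookkeeping is handled by the naturality of the graph construction.
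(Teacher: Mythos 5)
Your overall strategy---encode a morphism $Z\to X$ by its graph in $Z\times X$, algebraize the graph via Corollary \ref{cor:clGAGA1} applied to the separated, locally of finite type stack $Z\times X$, and then use Theorem \ref{thm:clGAGA} to transfer the property ``$p_1\circ\iota$ is an isomorphism''---is exactly the standard argument the paper is gesturing at when it says the corollary follows ``readily,'' and most of your execution is sound. There is, however, one genuine error: the graph $\Gamma_f=(\mathrm{id}_Z,f):Z\to Z\times X$ is \emph{not} a closed immersion when $X$ is a genuine Deligne--Mumford stack. It is the pullback of $\Delta_X$ along $f\times\mathrm{id}_X$, and for a separated DM stack $\Delta_X$ is proper, unramified and representable---hence finite---but it is a monomorphism (and so a closed immersion) only when $X$ is an algebraic space; for $X=BG$ with $G$ a nontrivial finite group the graph of $Z\to BG$ is a finite cover of degree $|G|$ onto its image. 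Consequently the step in your essential-surjectivity argument where you ``show the algebraic map is a closed immersion, since its analytification is'' starts from a false premise: $\Gamma_\phi:\an{Z}\to\an{Z}\times\an{X}$ is not a closed immersion either.

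Fortunately this error is inessential, because the closed-immersion condition plays no role in the rest of your argument. The correct dictionary identifies $\Hom(Z,X)$ with the groupoid of finite morphisms $\iota:\Gamma\to Z\times X$ such that $p_1\circ\iota$ is an isomorphism (equivalently, sections of $p_1$ up to $2$-isomorphism), with no immersion condition; Corollary \ref{cor:clGAGA1} needs only that $\Gamma_\phi$ is finite with proper source, which holds since $X$ is separated DM; and the one property that must be transferred, namely that $p_1\circ\iota$ is an isomorphism, is handled exactly as you wrote it, by applying Proposition \ref{prop:clGAGA_mor} and the full faithfulness in Theorem \ref{thm:clGAGA} to the map $\Orb_Z\to(p_1\circ\iota)_*\Orb_\Gamma$ and using that a finite morphism inducing an isomorphism on pushed-forward structure sheaves is an isomorphism. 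So simply delete the closed-immersion claims and the cokernel computation on $Z\times X$. Your full-faithfulness paragraph is also essentially right but loose: one must restrict to isomorphisms of graphs over $Z\times X$ whose first component is ($2$-isomorphic to) the identity of $Z$, and check that this condition is detected after composing with $p_1$ and is therefore preserved under the (fully faithful, by Theorem \ref{thm:clGAGA} applied to coherent $\Orb_{Z\times X}$-algebras) algebraization of finite covers; alternatively one can argue directly that $2$-isomorphisms $f\Rightarrow g$ are sections of the finite morphism $Z\times_{(f,g),X\times X,\Delta_X}X\to Z$ and apply the same algebraization result to that finite cover of the proper stack $Z$.
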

Before we prove Theorem \ref{thm:clGAGA}, we need the following
preliminary result.
\begin{prop}\label{prop:clGAGA_mor}
  Let $f : X \to Y$ be a separated morphism of locally of finite type
  algebraic $\C$-stacks with locally separated diagonals. Fix a
  coherent $\Orb_X$-module $F$, with support proper over $Y$. Then for
  each $i\geq 0$, the analytic comparison map
  \[
  \an{(R^if_*F)} \longrightarrow R^i(\an{f})_*\an{F}
  \]
  is an isomorphism of coherent $\Orb_{\an{Y}}$-modules. In
  particular, if $X$ is a separated and locally of finite type
  algebraic $\C$-stack, then for each $i\geq 0$, the comparison
  map 
  \[
  H^i(X,F) \longrightarrow H^i(\an{X},\an{F})
  \]
  is an isomorphism of $\C$-vector spaces.
\end{prop}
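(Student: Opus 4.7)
The plan is to adapt Grothendieck's proof in \cite[Exp.~XII]{SGA1} to stacks by invoking M.~Olsson's Chow lemma for Artin stacks \cite{MR2183251}. I first prove the ``in particular'' absolute assertion for $X$ a separated and locally of finite type algebraic $\C$-stack, then derive the relative statement by base change.

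For the absolute case, on replacing $X$ by a quasi-compact open substack containing $\supp F$, we may assume $X$ is of finite type. I perform Noetherian induction on the closed support. Let $\mathcal{P}\subset\COHP{X}$ denote the full subcategory of coherent sheaves $G$ with proper support for which the comparison map $H^i(X,G)\to H^i(\an{X},\an{G})$ is an isomorphism in every degree; a five-lemma argument on the long exact sequences on both sides shows $\mathcal{P}$ is closed under extensions and direct summands. By Olsson's Chow lemma one obtains a proper surjection $\pi:X'\to X$ with $X'$ a quasi-projective $\C$-scheme. Combining the analytic proper mapping theorem and the Leray spectral sequence with classical GAGA applied to a projective compactification of $X'$ shows $\pi_*G\in\mathcal{P}$ for every $G\in\COHP{X'}$. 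Refining $\pi$ so that it restricts to an isomorphism over a dense open substack (exploiting the generic scheme structure of $X$ when $X$ is Deligne--Mumford; for Artin $X$ an extra layer of induction via a smooth cover is required), the adjunction unit $F\to\pi_*\pi^*F$ has kernel and cokernel supported on a strictly smaller closed substack, so Noetherian induction closes the argument.

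For the relative statement, the comparison is a morphism of coherent sheaves on $\an{Y}$, hence may be checked smooth-locally, whence we reduce to $Y$ an affine scheme. Working stalk-by-stalk at $y\in\an{Y}$, one uses the flatness of $\Orb_{\an{Y},y}$ over $\Orb_{Y,y}$ and the coincidence of their $\mathfrak{m}_y$-adic completions, together with proper base change in both the algebraic and analytic categories (which applies under our support-properness hypothesis), to reduce the stalk comparison to the family of absolute comparisons for the Artinian base changes $X\times_Y\spec(\Orb_{Y,y}/\mathfrak{m}_y^n)$ as $n$ varies. These are handled by the absolute case already established.

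The principal obstacle is the d\'evissage in step one: Olsson's Chow lemma only guarantees a proper surjection, whereas the induction on support demands a morphism that is an isomorphism on a dense open substack. Arranging this refinement, particularly in the Artin case where one cannot directly appeal to a generic scheme structure, is the real content; once it is in place, the remaining ingredients---flat base change, analytic proper base change, and classical GAGA on projective schemes---are standard.
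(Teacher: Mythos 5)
Your overall architecture (reduce to representable morphisms via Olsson's Chow lemma, quote \cite[Exp.~XII, Thm.~4.2]{SGA1} there, and close with noetherian induction on supports) matches the paper's, but the step you yourself flag as unresolved --- producing from Olsson's proper surjection $\pi:X'\to X$ a comparison morphism whose defect is supported on a strictly smaller closed substack --- is a genuine gap, and the fix you propose cannot work. You want to refine $\pi$ so that it is an isomorphism over a dense open substack; for an Artin stack with positive-dimensional generic stabilizer no representable morphism from a scheme is an isomorphism over any dense open, so the adjunction unit $F\to\pi_*\pi^*F$ need not have kernel and cokernel with small support. The paper's device is different and is the actual content of the dévissage: after reducing to $X$ reduced with $\supp F=X$, apply generic flatness to get a dense open $W\subset X$ over which $h:V\to X$ is \emph{flat} (not an isomorphism), and replace $\pi_*\pi^*F$ by the descent-theoretic object
\[
F' \;=\; \mathrm{eq}\,\bigl(h_*h^*F \rightrightarrows h^2_*(h^2)^*F\bigr),
\qquad h^2: V\times_X V \to X .
\]
By fppf descent the natural map $\alpha:F\to F'$ is an isomorphism over $W$, so $\ker\alpha$, $\im\alpha$, $\coker\alpha$ are supported on $|X\setminus W|$ and the noetherian induction closes; $F'$ itself is handled by the representable case applied to $h$ and $h^2$. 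Without this (or an equivalent) idea your induction does not get off the ground for Artin stacks, and even for Deligne--Mumford stacks your appeal to ``the generic scheme structure'' would need the finite-inertia/coarse-space input that the paper deliberately avoids.

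Secondarily, your inversion of the logical order --- proving the absolute case first and recovering the relative statement stalkwise via completions, Artinian thickenings, and what you call proper base change --- is workable in principle but substitutes the theorem on formal functions (in both the algebraic and the analytic categories, the latter requiring Grauert's coherence theorem just to know $R^i(\an{f})_*\an{F}$ is coherent) for what the paper obtains for free: the relative comparison map is smooth-local on $Y$, so one simply takes $Y$ to be an affine finite-type $\C$-scheme and runs the whole argument relatively, exactly as in \cite[Exp.~XII]{SGA1}. Note also that coherent higher direct images do not satisfy proper base change in the naive sense without flatness hypotheses; what you need there is precisely the formal-functions comparison, so the phrasing should be corrected even if you keep that route.
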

\begin{proof}
  The statement is local on $Y$ for the smooth topology, so we may
  assume henceforth that $Y$ is a separated and finite type
  $\C$-scheme. Next, let $Z$ denote the stack-theoretic support of
  $F$, and let $\imath :Z \to X$ 
  denote the inclusion. Let $g = f\circ \imath$, then since the
  functors $\imath_*$ and $\an{(\imath_*)}$ are exact, we have
  a commutative diagram for every $i\geq 0$:
  \[
  \xymatrix{\an{(R^if_*[\imath_*\imath^*F])} \ar[r] \ar[d] &
    R^i(\an{f})_*(\an{\imath})_*\an{[\imath^*F]}\ar[d] \\
    \an{(R^ig_*[\imath^*F])} \ar[r] & 
    R^i(\an{g})_*\an{[\imath^*F]}. }
  \]
  The two vertical maps are isomorphisms. In particular, since the
  natural maps $\imath_*\imath^*F \to F$ and
  $(\an{\imath})_*\an{[\imath^*F]}$ are isomorphisms, then we are
  reduced to the case where the original morphism $f$ is proper. In the case
  that the morphism $f$ is representable by schemes, the result
  follows from \cite[Exp. XII, Thm. 4.2]{SGA1}. Next, assume that $f$
  is representable by algebraic spaces, then by Chow's Lemma
  \cite[Thm. 4.3.1]{MR0302647}, there is a projective and birational
  $Y$-morphism $g : X' \to X$, such that the composition $X' \to Y$ is
  projective. Repeating verbatim the d\'evissage argument given in the
  proof of \cite[Exp. XII, Thm. 4.2(2)]{SGA1} proves the result in the
  case that the morphism $f$ is representable by algebraic spaces. 
  
  For the general case, we note that standard reductions, combined
  with noetherian induction and d\'evissage allow us to reduce to the
  case where $X$ is reduced, 
  $\supp F = X$ and the result is proven for all coherent sheaves $G$
  on $X$ with $|\supp G| \subsetneq |X|$.  Using
  \cite[Thm. 1.1]{MR2183251}, there is a proper and representable
  $Y$-morphism $h : V \to X$, where $V$ is a projective $Y$-scheme. Since
  $X$ is reduced, then by generic flatness  \cite[\textbf{IV},
  6.9.1]{EGA}, there is a dense open $W \hookrightarrow X$ for which
  the morphism $h^{-1}W \to W$ is flat. Let $h^2 : V\times_X V \to X$
  denote the induced morphism, then there is a natural map
  \[
  \alpha : F \to F' := \mathrm{eq}\,(h_*h^*F \rightrightarrows
  h^2_*(h^2)^*F), 
  \]
  and by fppf descent, it is an isomorphism over $W$. In particular,
  $\ker \alpha$, $\im \alpha$, and $\coker \alpha$ are supported on
  the complement of $|W|$. Since the map $h$ is representable, then by
  what we have already proven, the coherent sheaf $F'$ satisfies the
  conclusion of the Proposition. Using the resulting two exact sequences 
  involving all of these terms, and d\'evissage, we deduce the result.
\end{proof}
\begin{proof}[Proof of Theorem \ref{thm:clGAGA}]
  That the functor $A_X$ is fully faithful is exactly the same as
  \cite[Exp. XII, Thm. 4.4]{SGA1}, where we use Propostion
  \ref{prop:clGAGA_mor} in place of \cite[Exp. XII, Thm. 4.2]{SGA1}. We
  now proceed to show that the functor $A_X$ is essentially
  surjective. An easy reduction allows us to immediately reduce to the
  case where the algebraic stack $X$ is assumed to be quasi-compact.\\  
  \emph{Quasi-projective case:} Consider a compactification $\jmath :
  X \hookrightarrow \bar{X}$, where $\bar{X}$ is a projective
  $\C$-scheme. Let $\AN{F} \in \COHP{X}$, then $\AN{Z}:=V(\Ann\AN{F})$
  is a proper analytic space, which is a closed analytic subspace of
  $\an{\bar{X}}$. By \cite[Exp. XII, Thm. 4.4]{SGA1}, we conclude that
  $\AN{Z}$ is algebraizable to a projective $\C$-scheme $Z
  \hookrightarrow \bar{X}$.  By passing to analytifications, it is
  easy to verify that the map $Z \to \bar{X}$ factors through
  $X$. Let $i : Z \hookrightarrow X$ denote the resulting closed
  immerion of $\C$-schemes, then since analytification preserves
  annihilators we have that the natural map $(\an{i})_*\an{i}^*\AN{F}
  \to \AN{F}$ is an isomorphism; and it suffices to show that
  $\an{i}^*\AN{F} \in \COH{\an{Z}}$ lies in the essential image of
  $A_Z$. This is clear from the usual GAGA statements \cite[Exp. XII,
  Thm. 4.4]{SGA1}.\\
  \emph{General case:} This is a standard d\'evissage argument,
  similar to that used in the proof of Proposition
  \ref{prop:clGAGA_mor}, so we omit it.  
\end{proof}

\newcommand{\etalchar}[1]{$^{#1}$}
\providecommand{\bysame}{\leavevmode\hbox to3em{\hrulefill}\thinspace}
\providecommand{\MR}{\relax\ifhmode\unskip\space\fi MR }
\providecommand{\MRhref}[2]{%
  \href{http://www.ams.org/mathscinet-getitem?mr=#1}{#2}
}
\providecommand{\href}[2]{#2}

\end{document}